\documentclass[10pt]{article}
\usepackage{amssymb}
\usepackage{amsmath}
\usepackage{amsfonts}
\usepackage{textcomp}
\usepackage{amsthm}
\usepackage{bookmark}
\usepackage{appendix}
\usepackage{dsfont}
\usepackage{changes}%\usepackage[final]{changes}  %\listofchanges
\usepackage{xcolor}
\usepackage{verbatim}
\usepackage{enumitem}
\usepackage{url}
\usepackage{hyperref}
\usepackage{cases}
\usepackage[T1]{fontenc}
\date{}

\newtheorem{Theorem}{Theorem}[section]
\newtheorem{Lemma}[Theorem]{Lemma}
\newtheorem{Remark}[Theorem]{Remark}

\newtheorem{Proposition}[Theorem]{Proposition}
\newtheorem*{Theorem*}{Theorem}

\numberwithin{equation}{section} \theoremstyle{plain}

\renewcommand{\theequation}{\arabic{section}.\arabic{equation}}

\allowdisplaybreaks[4]

\def\R{{\textbf{R}}}
\def\H{\mathcal{H}}

\def\p{\partial}

\def\f{\frac}
\def\R{\mathbb R}
\def\N{\mathbb N}

\def\i{\mathrm{i}}

\def\Om{\Omega}

\def\n{\nabla}

\def\E{\mathcal E}
\title{Ground States of a Transversely Confined Polaron: Uniqueness and an Explicit Limiting Profile}

\author{Shuang Wu\thanks{School of Mathematics and Statistics,  and Hubei Key Laboratory of Mathematical Sciences, Central China Normal University, Wuhan 430079, P. R. China. Email: swu@mails.ccnu.edu.cn.},
and  Hongfei Zhang\thanks{School of Mathematics and Statistics, and  Hubei Key Laboratory of
Mathematical Sciences, Central China Normal University, Wuhan 430079,
P. R. China. Email: hfzhang@mails.ccnu.edu.cn.}
}

\date{\today}

\begin{document}
\maketitle
\begin{abstract}
We study minimizers of the Pekar functional with a transverse harmonic potential, which models a polaron harmonically trapped in the transverse plane but free along the longitudinal direction. For any confinement strength $\Omega>0$, minimizers exist and, up to a translation along the $x_3$-direction,  are radially decreasing in $(x_1,x_2)$ and symmetric decreasing in $x_3$. For small $\Omega>0$ we prove uniqueness up to these symmetries. For sufficiently large $\Omega>0$ we derive a three-term asymptotic expansion of the ground state energy and show that, after a suitable rescaling, the minimizers converge in $H^1(\mathbb{R}^3)\cap L^\infty(\mathbb{R}^3)$ to the product of the normalized ground state of a two-dimensional linear harmonic oscillator and that of an effective one-dimensional nonlinear local problem, displaying asymptotic variable separation.
This gives, for the transversely
confined Pekar model, a rigorous ground-state counterpart of the
three-dimensional-to-one-dimensional dimension-reduction phenomenon
numerically observed for Coulomb-type Schr\"odinger equations with
anisotropic confining potentials in [W. Z. Bao, H. Y. Jian, N. J. Mauser
and Y. Zhang, SIAM J. Appl. Math., 2013]. Moreover,
uniqueness up to the same symmetries is also established for all sufficiently large $\Omega>0$.
\end{abstract}

\vskip 0.05truein

\noindent {\it Keywords:}  Pekar model;  Transverse confinement; Uniqueness of ground states; Dimensional reduction; Asymptotic variable separation

\vskip 0.05truein
\noindent {\bf Mathematics Subject Classification}
Primary: 35Q40; Secondary: 35J20 $\cdot$ 35B40 $\cdot$ 35A02

\vskip 0.05truein

\section{Introduction}
A polaron describes an electron interacting with the quantized optical modes of a polar crystal, and a  ``large'' polaron refers to the case where the spatial extension of this polaron is large compared with the spacing of the underlying lattice.  In the large-polaron regime, the Pekar model provides a nonlinear variational description of the ground state, in which the phonon
field is treated classically and the electron density interacts with itself through a Coulomb-type attractive potential. For general background on polaron models and related analytical properties, we refer to the surveys \cite{Dev,Dev07,DA,GL}.

For three-dimensional models, strong transverse confinement is a standard mechanism leading to
quasi-one-dimensional effective models. In the Gross--Pitaevskii and Schr\"odinger--Poisson literature, this regime is often called cigar-shaped confinement, where the transverse variables are strongly confined while the longitudinal direction remains effective; see  \cite{BAC,BJMZ} and references therein. In a broader context, related rigorous results on partially confined Schr\"odinger--Poisson systems and strongly anisotropic nonlinear Schr\"odinger equations can be found in \cite{BMP,BMSW}. Low-dimensional and quasi-one-dimensional polaron systems have also attracted considerable attention, e.g. \cite{CTD,GAD}. Such quasi-one-dimensional regimes can be realized experimentally by imposing strong transverse confinement. For example, Catani et al. \cite{CLNGIMKG} created arrays of one-dimensional tubes by means of two-dimensional optical lattices, with the transverse harmonic oscillator frequency large enough to ensure the one-dimensional regime.  In these settings, the observable physics is effectively one-dimensional, yet a rigorous derivation of this dimension reduction from the full 3D model remains challenging, especially for nonlocal interactions.

Motivated by this partial confinement mechanism, we study in this paper a transversely confined Pekar model. More precisely, the confinement is imposed only in the transverse variables $x_\perp=(x_1,x_2)\in\R^2$, while the model remains translation invariant in the longitudinal $x_3$-direction. The polaron energy in this model is given by the functional
\begin{equation}\label{functional}
\E_\Omega(u):=\int_{\R^3}|\n u|^2dx+\Omega\int_{\R^3}|x_\perp|^2|u|^2dx-\f12\iint_{\R^3\times\R^3}\f{|u(x)|^2|u(y)|^2}{|x-y|}dxdy,
\end{equation}
and the ground state energy is defined as
\begin{equation}\label{inf}
E_\Omega=\inf\{\E_\Omega(u): \|u\|_2:=\|u\|_{L^{2}(\R^3)}=1,\ u\in \H\},
\end{equation}
where $x=(x_\perp,x_3)\in\R^2\times \R$, $\Omega>0$ represents the strength of the two-dimensional transverse harmonic trap. The corresponding Hilbert space $\H$ is defined by
\begin{equation}
\H:=\Big\{u\in H^1(\R^3): \int_{\R^3}|x_\perp|^2|u|^2dx<\infty\Big\},
\end{equation}
together with the norm
$$\|u\|_{\H}=\Big\{\int_{\R^{3}}|\nabla u|^{2}dx+\int_{\R^3}|x_\perp|^2|u|^2dx+\int_{\R^3}|u|^2dx\Big\}^{\frac{1}{2}}.$$

Partial confinement has also been studied for nonlinear Schr\"odinger equations (NLS) with local nonlinearities, e.g.  \cite{ACD, BBJV, HJ}. In particular, Bellazzini et al. \cite{BBJV} proved the existence, orbital stability and qualitative properties of  standing waves for supercritical NLS with a harmonic confinement acting only in the  $x_\perp$-variables. More recently, Hong and Jin \cite{HJ} studied the strong-confinement limit for the cubic NLS with a strong two-dimensional
harmonic potential, proving dimension reduction to the one-dimensional ground state and uniqueness of the three-dimensional minimizer for sufficiently strong confinement.

Compared with these local NLS models, the transversely  confined Pekar functional contains a nonlocal attractive Coulomb self-interaction. Thus, the lack of compactness caused by the translation invariance in the unconfined direction and the dimension reduction mechanism have to be treated together with this nonlocal Coulomb interaction, which makes the effective one-dimensional limit and the uniqueness analysis more delicate. To the best of our knowledge, uniqueness and sharp strong-confinement asymptotics for ground states of a transversely confined Pekar functional seem
not to have been established before. In this paper, we derive a three-term expansion of the ground state energy and explicitly identify the limiting one-dimensional profile in the strong-confinement regime, and prove uniqueness of positive minimizers up to translations in the unconfined direction in both the weak- and strong-confinement regimes.

Whenever a minimizer exists, the variational principle implies that every minimizer $u_\Omega$ satisfies the Euler-Lagrange equation
\begin{equation}\label{ele}
\begin{split}
(-\Delta_\perp+\Omega|x_\perp|^2)u_\Omega(x)-\p_3^2u_\Omega-\int_{\R^3}
\f{|u_\Omega(y)|^2}{|x-y|}dy u_\Omega(x)=\mu_\Omega u_\Omega(x)\text{ \ in \ }\R^3,
\end{split}
\end{equation}
where $\Delta_\perp:=\p_1^2+\p_2^2$, and $\mu_\Omega\in\R$ is a suitable Lagrange multiplier.
One can note from \cite[Theorem 6.17]{Lieb1} that if $u_\Omega$ is a minimizer of $E_\Omega$, then $\E_\Omega(u_\Omega)=\E_\Omega(|u_\Omega|)$, which implies that $|u_\Omega|$ is also a minimizer of $E_\Omega$. We thus assume that the minimizer is nonnegative. By Harnack's inequality (cf. \cite[Corollary 5.3]{Trudinger}), we then deduce from (\ref{ele}) that $|u_\Omega|>0$ holds in $\R^3$. Moreover, we have $u\equiv e^{i\theta}|u|~(\theta\in \R)$ for every minimizer of $E_\Om$. Therefore, without loss of generality, in the present paper we focus on positive minimizers of $E_\Omega$.

We first establish the existence and qualitative properties of minimizers
for  (\ref{inf}).

\begin{Theorem}\label{existence}
For any fixed $\Omega>0$, $E_\Omega$ admits at least one minimizer.  Moreover, up to a translation along the $x_3$-direction, the positive minimizer is radially decreasing with respect to $(x_1,x_2)$ and $x_3$.
\end{Theorem}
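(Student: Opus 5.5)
The plan is to prove existence by concentration-compactness restricted to the single non-compact direction $x_3$, and then to obtain the symmetry from rearrangement inequalities.

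\textbf{Existence.} First, $E_\Omega$ is finite and negative, with a bounded minimizing sequence:
\begin{itemize}
\item The Hardy--Littlewood--Sobolev and Gagliardo--Nirenberg inequalities give
\[
D(u):=\iint\frac{|u(x)|^2|u(y)|^2}{|x-y|}dxdy\le C\|u\|_{12/5}^4\le C\|\nabla u\|_2\|u\|_2^3 .
\]
Hence $\mathcal E_\Omega(u)\ge \frac12\|\nabla u\|_2^2+\Omega\int|x_\perp|^2|u|^2-C$ on the unit sphere, so $E_\Omega>-\infty$ and minimizing sequences are bounded in $\mathcal H$.
\item To see $E_\Omega<\Lambda_\Omega:=2\sqrt\Omega$ (the bottom of the spectrum of $-\Delta_\perp+\Omega|x_\perp|^2$), use trial functions $\varphi_\Omega(x_\perp)\,\lambda^{1/2}w(\lambda x_3)$, where $\varphi_\Omega$ is the normalized 2D oscillator ground state.
 \begin{itemize}
 \item The longitudinal kinetic energy scales like $\lambda^2$.
 \item The Coulomb term is bounded below by $c\,\lambda\log(1/\lambda)$ for small $\lambda$, since the 1D interaction of spread-out profiles at distance $\gg$ transverse width behaves like $\iint \frac{|w|^2|w|^2}{|s-t|}$ cut off near the diagonal. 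Even a bound $\gtrsim\lambda$ suffices.
 \end{itemize}
 Thus $E_\Omega\le \Lambda_\Omega+C\lambda^2-c\lambda<\Lambda_\Omega$.
\end{itemize}

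Next, rule out vanishing. Take a minimizing sequence $u_n$ and replace it by the Steiner-type symmetrization described below, so we may already assume that each $u_n$ is nonnegative, radially decreasing in $x_\perp$ and symmetric decreasing in $x_3$.
\begin{itemize}
\item Suppose $u_n\rightharpoonup 0$ in $\mathcal H$. Monotonicity in $x_3$ together with the transverse confinement (compact embedding in $x_\perp$ for the weight $|x_\perp|^2$) forces the mass to spread along $x_3$. Lions' lemma in the $x_3$ variable then gives $u_n\to0$ in $L^{12/5}$, so $D(u_n)\to0$.
\item In that case $\liminf\mathcal E_\Omega(u_n)\ge\Lambda_\Omega>E_\Omega$, a contradiction.
\end{itemize}
So the weak limit $u\neq0$, and this is where symmetrization removes the need for translations.

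To exclude dichotomy, use the strict subadditivity coming from the scaling of the nonlinear term. Set $E_\Omega(m)$ to be the infimum with $\|u\|_2^2=m$. Then $E_\Omega(\theta m)<\theta E_\Omega(m)$ for $\theta>1$, because the quartic term scales as $\theta^2$ and $D>0$ along minimizing sequences. Combining this with the Brezis--Lieb splitting of the quadratic parts and of $D$ (a nonlocal Brezis--Lieb lemma, cf. the Lieb/Lions choquard arguments) gives $\|u\|_2=1$. Hence strong $L^2$ convergence holds, and by interpolation $D(u_n)\to D(u)$. Weak lower semicontinuity then makes $u$ a minimizer.

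\textbf{Symmetry.} For any positive minimizer $u$, let $u^\star$ be the Steiner symmetrization in the $x_\perp$ variables, followed by Steiner symmetrization in $x_3$. Each step has the following effects:
\begin{itemize}
\item it preserves the $L^2$ norm;
\item it does not increase $\|\nabla u\|_2$;
\item it does not increase $\int|x_\perp|^2|u|^2$, by the Hardy--Littlewood rearrangement inequality with the increasing radial weight in $x_\perp$; the $x_3$ step does not touch this weight;
\item it does not decrease $D$, by Riesz's rearrangement inequality applied slice-wise. This is the Brascamp--Lieb--Luttinger/Steiner version, valid because $1/|x-y|$ is symmetric decreasing in each of $x_\perp-y_\perp$ and $x_3-y_3$.
\end{itemize}
Hence $u^\star$ is also a minimizer, with equality throughout. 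The key step is the equality case. Since $|\cdot|^{-1}$ is strictly symmetric decreasing, equality in the Riesz inequality for Steiner symmetrization (Lieb's strict rearrangement theorem, \cite[Theorem 3.9]{Lieb1} in its Steiner form) forces $|u|^2$ to be a translate of its symmetrization in the respective variables. The transverse translation is then killed by the strict inequality in the potential term: a shift $a\ne0$ in $x_\perp$ strictly increases $\int|x_\perp|^2|u|^2$, since the increase equals $|a|^2$. So only an $x_3$ translation remains.

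\textbf{Main obstacle.} I expect the hardest step to be this equality case for Steiner (rather than full) symmetrization in the nonlocal term. The fallback is an alternative route. Strict decrease then follows from positivity and the Euler--Lagrange equation (\ref{ele}): since $u$ is a translate of $u^\star$, it is nonincreasing, and strict monotonicity comes from applying the strong maximum principle to $\partial_r u$ and $\partial_3 u$, which satisfy linearized equations with nonpositive sign.
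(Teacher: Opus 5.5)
Your proposal is correct, but it takes a different route from the paper in both halves.

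\textbf{Existence.} The paper treats an arbitrary minimizing sequence. It excludes vanishing only up to $x_3$-translations, using Lions' lemma and compactness on slabs $\R^2\times(-r,r)$. It excludes dichotomy by IMS localization, a direct splitting of the Coulomb cross term, and the bound $\mathcal E_\Omega(f_nu_n)\ge\|f_nu_n\|_2^4E_\Omega+\|f_nu_n\|_2^2(1-\|f_nu_n\|_2^2)\Lambda_0$, which reduces dichotomy to $E_\Omega\ge\Lambda_0=2\sqrt\Omega$. This yields compactness of every minimizing sequence up to $x_3$-translations. You symmetrize first, so the slab mass $\int_{\R^2\times(z-r,z+r)}u_n^2$ is maximal at $z=0$ and no translations are needed. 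You then use strict subadditivity with a nonlocal Brezis--Lieb splitting. This is standard and shorter, but it only produces a minimizer. Two small corrections: $E_\Omega$ need not be negative, since only $E_\Omega<2\sqrt\Omega$ holds and is used. Also, $E_\Omega(\theta m)<\theta E_\Omega(m)$ requires $E_\Omega(m)<2\sqrt\Omega\,m$ at every mass $m\in(0,1]$, which your trial functions scaled by $\sqrt m$ provide.

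\textbf{Symmetry in $x_3$.} The paper avoids rearrangement here. It reflects the two half-masses and uses the Lopes--Mari\c{s} Fourier identity to obtain a Neumann condition for $|x|^{-1}*u^2$ on $\{x_3=0\}$. It then concludes by unique continuation for the Schr\"odinger--Poisson system. Your Steiner equality case replaces all of this and works. For a.e.\ pair $x_\perp\ne y_\perp$, the kernel $\tau\mapsto(|x_\perp-y_\perp|^2+\tau^2)^{-1/2}$ is strictly symmetric decreasing. So the one-dimensional strict Riesz inequality makes $u^2(x_\perp,\cdot)$ and $u^2(y_\perp,\cdot)$ translates of their rearrangements by a \emph{common} shift. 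Since every slice of $u>0$ is nonzero, that shift is unique, hence independent of $x_\perp$. This closes the step you flagged as the main obstacle, so no fallback is needed. The paper's reflection method buys independence from P\'olya--Szeg\H{o}-type inequalities, which matters when rearrangement is unavailable; yours is more elementary in the present setting.
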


The existence proof is based on the concentration--compactness
principle. The transverse confinement prevents loss of compactness in
the $x_\perp$-variables, so that the only possible drift of minimizing
sequences occurs along the unconfined $x_3$-direction. The vanishing
and dichotomy alternatives are excluded by a strict energy bound and
a suitable splitting estimate for the nonlocal Coulomb interaction.  It is worth mentioning that, unlike \cite{BBJV}, we make full use of the spectral properties of the operator $-\Delta+\Omega|x_\perp|^2$ in $L^2(\R^3)$ to exclude the dichotomy.
Consequently, every minimizing sequence is compact up to translations
in the $x_3$-direction, which yields the existence of a minimizer.

The qualitative properties of minimizers require genuinely nonlocal arguments. While the radial symmetry and monotonicity in the confined variables follow from partial Schwarz symmetrization combined with the Riesz rearrangement inequality for the Coulomb term, the symmetry in the unconfined direction cannot be obtained by the same reflection argument used in the local NLS case (eg. \cite{BBJV}). We instead introduce a reflection construction for the density, use an exact Fourier identity for the Coulomb energy difference deriving in \cite{LM08}, and establish a Neumann boundary condition for the potential at the symmetry plane. Combining this with a unique continuation principle for the coupled Schr?dinger¨CPoisson system, we prove that any positive minimizer is even in
$x_3$ after a suitable translation. The monotonicity with respect to
 $|x_3|$ then follows from the equality characterization of Steiner symmetrization.

The Euler--Lagrange equation \eqref{ele} associated with \eqref{functional} is a
particular attractive Hartree equation of the form
\begin{equation*}
-\Delta\phi+V(x)\phi+\bigl(v*|\phi|^2\bigr)\phi=\mu\phi,
\end{equation*}
with $V(x)=\Omega|x_\perp|^2$ and $v(x)=-{|x|}^{-1}$. As pointed out by R. Seiringer in \cite{Seiringer}, at least for attractive interactions, uniqueness of minimizers does not hold in
general; see, e.g., \cite{AFGST,GS}. In contrast, we prove that, for the partially confined Pekar functional considered here, the positive minimizer is unique up to translations in the $x_3$-direction in both the weak- and strong-confinement regimes. We first state the result for weak-confinement.

\begin{Theorem}
\label{small-uniqueness}
There exists a sufficiently small $\Omega_0>0$ such that, for every
$0<\Omega<\Omega_0$, the positive minimizer of \eqref{inf} is unique up to translations in the $x_3$-direction.
\end{Theorem}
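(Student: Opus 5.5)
Our plan is a perturbation argument around $\Omega=0$, where the problem becomes the classical Choquard--Pekar problem. By Lieb's theorem, its positive minimizer $Q$ is unique up to translations in $\R^3$, and by Lenzmann it is nondegenerate. Precisely, the linearized operator
\[
L_+=-\Delta-\mu_0-|x|^{-1}*Q^2-2Q\,\bigl(|x|^{-1}*(Q\,\cdot)\bigr)
\]
has kernel $\mathrm{span}\{\p_1Q,\p_2Q,\p_3Q\}$ on $L^2(\R^3)$.

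\textbf{Convergence as $\Omega\to0$.} We first show that $E_\Omega\to E_0$. The upper bound follows by testing with $Q$ and using $\Omega\int|x_\perp|^2Q^2\to0$; the lower bound is $E_\Omega\ge E_0$. Let $u_\Omega$ be a positive minimizer, normalized as in Theorem \ref{existence}, so that it is radial decreasing in $x_\perp$ and even and decreasing in $x_3$. Then $(u_\Omega)$ is a minimizing sequence for $E_0$, bounded in $H^1$. Since it is symmetric decreasing about the origin in all directions, it cannot drift, and concentration compactness for $E_0$ (Lions) gives $u_\Omega\to Q$ in $H^1$, with $Q$ centered at the origin. We also get $\mu_\Omega\to\mu_0$ and $\Omega\int|x_\perp|^2u_\Omega^2\to0$. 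Elliptic regularity together with uniform exponential decay (a comparison argument using $\mu_\Omega\le\mu_0/2<0$) upgrades this to convergence in $L^\infty$ and in weighted norms.

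\textbf{Contradiction argument.} Suppose $u_1\neq u_2$ are two positive minimizers for the same $\Omega$, each normalized as above. Each satisfies \eqref{ele}, with multipliers $\mu_1,\mu_2$. Set
\[
w=\frac{u_1-u_2}{\|u_1-u_2\|_{L^\infty}+|\mu_1-\mu_2|}.
\]
Subtracting the equations gives
\[
(-\Delta+\Omega|x_\perp|^2-\mu_1)w-\bigl(|x|^{-1}*u_1^2\bigr)w-\bigl(|x|^{-1}*((u_1+u_2)w)\bigr)u_2=\tfrac{\mu_1-\mu_2}{\|u_1-u_2\|_\infty+|\mu_1-\mu_2|}\,u_2 .
\]
Uniform decay bounds let us pass to the limit along $\Omega\to0$. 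The limit $w_0$ solves $L_+w_0=c\,Q$, and the normalization $\|u_i\|_2=1$ forces $\int Qw_0=0$.

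\textbf{Identifying the limit.} Since $w$ inherits evenness in $x_3$ and radial symmetry in $x_\perp$, the limit $w_0$ is orthogonal to every $\p_jQ$. Using the identity $L_+(x\cdot\nabla Q+2Q)=-2\mu_0Q$, which follows from scaling, we get $w_0=\frac{c}{-2\mu_0}(x\cdot\nabla Q+2Q)$. The constraint $\int Qw_0=0$ reads $c\int Q(x\cdot\nabla Q+2Q)=c\cdot\tfrac12\int Q^2=0$, so $c=0$ and hence $w_0=0$. In the normalization, the $|\mu_1-\mu_2|$ part of the denominator then also vanishes in the limit. Writing $\mu_1-\mu_2$ through the integrated equation and expressing it via $w$ shows that it is $o(1)$ relative to $\|u_1-u_2\|_\infty$. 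So $\|w\|_\infty\to1$, while $w\to0$ locally uniformly and decays uniformly at infinity. This is the contradiction.

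\textbf{Main obstacle.} The hardest step will be the uniform control needed to pass to the limit. We need decay of $w$ that is uniform in $\Omega$, so that the $L^\infty$ mass cannot escape to infinity. We also need to handle the term $\Omega|x_\perp|^2w$, which is not uniformly small on $\R^3$. I would try a maximum-principle barrier: for $|x|$ large, $\mu_1+|x|^{-1}*u_1^2<\mu_0/2$ uniformly, so the operator $-\Delta+\Omega|x_\perp|^2-\mu_1-|x|^{-1}*u_1^2$ is coercive outside a fixed ball. The nonlocal source term there is bounded by $Ce^{-\delta|x|}$, since $u_2$ decays uniformly and $(u_1+u_2)w$ is bounded. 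Comparison then gives $|w|\le Ce^{-\delta|x|}$ with a constant independent of $\Omega$. The positive potential $\Omega|x_\perp|^2$ only helps the comparison, and it vanishes on compacts in the limit.
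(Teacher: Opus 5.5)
Your proposal is correct, but it takes a genuinely different route from the paper's. You use the classical blow-up scheme for local uniqueness. You normalize $u_1-u_2$ in $L^\infty$ together with the multiplier gap, get uniform $L^\infty$ bounds and exponential decay from barriers, and pass to the limit by elliptic compactness. You then kill the limit $w_0$ using only $\ker L_+=\mathrm{span}\{\p_jQ\}$, the symmetry of $w_0$, and the explicit preimage of $Q$ supplied by scaling. There is a minor sign slip: $\lambda\mapsto\lambda^2Q(\lambda x)$ solves the equation with multiplier $\lambda^2\mu_0$, so differentiating gives $L_+(x\cdot\n Q+2Q)=2\mu_0Q$, not $-2\mu_0Q$. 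This does not affect the conclusion $c=0$.

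The paper works entirely with quadratic forms in $H^1$. It tests the difference equation against $\bar w_\Omega=u_\Omega-\widetilde u_\Omega$ itself. This makes $\mathfrak q_0(\bar w_\Omega)+\Omega\int_{\R^3}|x_\perp|^2\bar w_\Omega^2dx$ equal to two pieces:
\begin{itemize}
\item a cubic remainder, bounded via Hardy--Littlewood--Sobolev by $o(1)\|\bar w_\Omega\|_{H^1}^2$ (Lemma~\ref{leme8});
\item the term $\frac12[(\mu_\Omega-\mu_0)+(\widetilde\mu_\Omega-\mu_0)]\|\bar w_\Omega\|_2^2$, which comes from the exact identities $\langle u_\Omega,\bar w_\Omega\rangle=-\langle\widetilde u_\Omega,\bar w_\Omega\rangle=\frac12\|\bar w_\Omega\|_2^2$.
\end{itemize}
It concludes with the coercivity \eqref{e2} of $\mathfrak q_0$ on even functions orthogonal to $Q$, extended in Lemma~\ref{leme3} to nearly orthogonal ones.

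The trade-off is as follows. The paper's route needs only $H^1$ convergence $u_\Omega\to Q$ and $\mu_\Omega\to\mu_0$. It uses the confinement term only through its sign and never compares $\mu_\Omega$ with $\widetilde\mu_\Omega$. However, it imports a quantitative coercivity statement for $L_+$, which encodes more than nondegeneracy: it also needs a single negative direction and $\langle L_+^{-1}Q,Q\rangle<0$. Your route needs only the kernel of $L_+$ plus an explicit computation. The cost is the uniform decay and regularity machinery you flag as the main obstacle. That machinery does go through as you sketch: $\Omega|x_\perp|^2$ only helps the barrier, and $H^1$ convergence gives $|x|^{-1}*u_\Omega^2\to|x|^{-1}*Q^2$ uniformly on $\R^3$. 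Hence $\mu_\Omega+|x|^{-1}*u_\Omega^2<\mu_0/2$ outside a ball independent of $\Omega$.
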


%A key observation in the proof of Theorem 1.2 is that the minimizers of $E_\Omega$ can be chosen to be even, and such even functions are $L^2$-orthogonal to the kernel of the linearized operator of the limit problem $E_0$ at its minimizers .
 A natural tool for carrying out such a perturbation analysis is the
implicit function theorem; see, e.g., \cite{GZZ}.
However, a straightforward application of the classical implicit
function theorem is not available here, since the natural
variational space changes in the limit: for $\Omega>0$, the
functional $E_\Omega$ is naturally defined on the weighted space
$\mathcal H$, whereas the limiting functional $E_0$ is naturally
defined on $H^1(\R^3)$. As $\Omega\searrow0$, the confinement term disappears from the
limiting functional, so that the limiting problem no longer provides
coercive control of the weighted norm
$\big\||x_\perp|u\big\|_{L^2(\R^3)}$.  As a result, there is no straightforward fixed
Banach-space framework in which the full perturbed equation is
well defined and the limiting linearized operator is simultaneously
invertible. Instead, we prove the local uniqueness   by combining a
suitable coercivity estimate with a nonlinear remainder estimate. The details are
given in Appendix \ref{Appendix}.
%This enables us to apply the implicit function theorem directly on the space of even functions to obtain the local uniqueness as $\Omega\searrow0$. The details are given in Appendix \ref{Appendix}.}

We now turn to the opposite strong-confinement regime
$\Omega\to+\infty$, which is the main focus of this paper. In this regime,
both the dimension reduction to an effective one-dimensional problem and the
uniqueness of minimizers involve new difficulties arising from the nonlocal
Hartree interaction. We first state the dimension reduction result.

\begin{Theorem}\label{thm1.2}
Let $u_\Omega$ be a positive minimizer of $E_\Omega$ defined by
\eqref{inf}. Then there exist translations
$\{\tau_\Omega\}\subset\R$ such that the rescaled functions
\begin{equation*}
w_\Omega^\tau(x_\perp,x_3):=\Omega^{-\frac14}(\ln\Omega)^{-\frac12}u_\Omega\left(\Omega^{-\frac14}x_\perp,(\ln\Omega)^{-1}(x_3+\tau_\Omega)\right)
\end{equation*}
satisfy
\begin{equation*}
w_\Omega^\tau\to W_0\quad\text{strongly in }\quad H^1(\R^3)\cap L^\infty(\R^3)\quad\text{as }\quad\Omega\to+\infty,
\end{equation*}
where
\begin{equation*}
W_0(x_\perp,x_3)=\frac{1}{4\sqrt{\pi}}e^{-\frac{|x_\perp|^2}{2}}\left(\cosh\frac{x_3}{8}\right)^{-1}\quad \text{for}\quad x=(x_\perp, x_3)\in \R^3.
\end{equation*}
\end{Theorem}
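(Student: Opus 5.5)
The plan is to prove Theorem~\ref{thm1.2} by matching an upper and a lower bound for $E_\Omega$ to order $(\ln\Omega)^2$, and then using the resulting energy information to get compactness of the rescaled minimizers. The heuristic fixes the scalings. The transverse operator $-\Delta_\perp+\Omega|x_\perp|^2$ has ground state $\chi_\Omega(x_\perp)=\Omega^{1/4}\pi^{-1/2}e^{-\sqrt\Omega|x_\perp|^2/2}$, energy $2\sqrt\Omega$, and spectral gap $2\sqrt\Omega$. For a density $\chi_\Omega^2(x_\perp)\rho(x_3)$, integrating out the transverse variables gives a one-dimensional kernel
\[
K_\Omega(t)=\iint\chi_\Omega^2(x_\perp)\chi_\Omega^2(y_\perp)\bigl(|x_\perp-y_\perp|^2+t^2\bigr)^{-1/2}dx_\perp dy_\perp .
\]
This kernel behaves like $1/|t|$ for $|t|\gg\Omega^{-1/4}$ and has $\int_{|t|<R}K_\Omega\,dt=\tfrac12\ln\Omega+2\ln R+O(1)$. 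Hence, on the longitudinal scale $(\ln\Omega)^{-1}$, the Coulomb energy is
\[
\tfrac12\iint\frac{\chi_\Omega^2\rho\,\chi_\Omega^2\rho}{|x-y|}=\tfrac14\ln\Omega\int\rho^2\,dx_3\,(1+o(1)),
\]
i.e.\ the interaction becomes a local quartic term with a logarithmically large coefficient. Writing $u=\chi_\Omega(x_\perp)\phi(x_3)$ and $\phi(x_3)=(\ln\Omega)^{1/2}\psi(x_3\ln\Omega)$ leads to
\[
E_\Omega=2\sqrt\Omega+(\ln\Omega)^2 e_{1D}+o\bigl((\ln\Omega)^2\bigr),\qquad e_{1D}=\inf_{\|\psi\|_2=1}\Bigl\{\int|\psi'|^2-c_0\int\psi^4\Bigr\},
\]
with an explicit $c_0$ (to be computed; I expect $c_0=\tfrac{1}{8\pi}$, coming from $\int\chi_1^4=\tfrac1{2\pi}$). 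The one-dimensional minimizer is a $\cosh^{-1}$ profile that is unique up to translation. Normalizing it and matching the scale is what should produce $W_0$ with the factor $\cosh(x_3/8)$. A sharper analysis of the $O(\ln\Omega)$ corrections, namely the $2\ln R$ term and the tail of $K_\Omega$, gives the third term of the expansion, but only the leading two terms are needed for the convergence.

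\textbf{Step 1 (upper bound).} Insert the trial state $\chi_\Omega(x_\perp)(\ln\Omega)^{1/2}\psi(x_3\ln\Omega)$, with $\psi$ the one-dimensional minimizer. Then evaluate the Coulomb term by splitting the $t$-integral into $|t|<\delta$ and $|t|>\delta$, using the explicit asymptotics of $K_\Omega$ above.

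\textbf{Step 2 (lower bound).} For a positive minimizer $u_\Omega$, decompose $u_\Omega=\chi_\Omega(x_\perp)\phi(x_3)+r$ with $r\perp\chi_\Omega$ fiberwise in $x_3$. The spectral gap gives $\langle(-\Delta_\perp+\Omega|x_\perp|^2)u,u\rangle\ge 2\sqrt\Omega+2\sqrt\Omega\|r\|_2^2$. Combining this with a crude Coulomb bound of size $O((\ln\Omega)^2)$ forces $\|r\|_2^2=O(\Omega^{-1/2}(\ln\Omega)^2)\to0$. It then remains to bound the Coulomb energy of $u_\Omega$ from above by $\tfrac14\ln\Omega\int\rho_3^2\,dx_3\,(1+o(1))$ plus lower-order terms, where $\rho_3(x_3)=\int|u_\Omega|^2dx_\perp$. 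For this I would use the convolution structure: bound $\sup_{x_\perp}\int|x-y|^{-1}|u(y)|^2dy_\perp$ by a logarithm of $\Omega^{1/4}$ times the local one-dimensional density, using the transverse $L^\infty$ and $H^1$ control inherited from the harmonic term. Finally, apply the one-dimensional Gagliardo--Nirenberg inequality to close the estimate. \emph{This sharp logarithmic Coulomb estimate for non-product functions is the main obstacle}. If the direct bound fails, I would first try to handle the cross terms between $\chi_\Omega\phi$ and $r$ by Hardy's inequality together with the smallness of $r$.

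\textbf{Step 3 (convergence).} Matching the bounds shows that $\psi_\Omega$, the rescaled longitudinal profile, is a minimizing sequence for $e_{1D}$, and that the transverse part of $w_\Omega^\tau$ converges to $\pi^{-1/2}e^{-|x_\perp|^2/2}$ in $H^1$. Concentration--compactness for $e_{1D}$ then gives translations $\tau_\Omega$ and strong convergence to the unique minimizer. By Theorem~\ref{existence} one may in fact take $\tau_\Omega=0$ after centering $u_\Omega$, since the limit is even and decreasing. Convergence of energies upgrades weak $H^1$ convergence to strong $H^1$ convergence. For the $L^\infty$ convergence, rescale the Euler--Lagrange equation \eqref{ele}. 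The rescaled Hartree potential, divided by $(\ln\Omega)^2$ after subtracting the transverse eigenvalue, is bounded in $L^\infty$. Elliptic (De Giorgi--Moser) estimates then give uniform H\"older bounds and uniform Gaussian-type decay, and Arzel\`a--Ascoli together with the $H^1$ limit yields uniform convergence to $W_0$.
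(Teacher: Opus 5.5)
Your architecture (lowest transverse mode plus spectral gap, reduction of the Hartree term to a local quartic 1D problem via the logarithmic kernel $K_\Omega$, concentration--compactness and uniqueness of the $\cosh^{-1}$ profile, then $L^\infty$ from the Euler--Lagrange equation) matches the paper's. However, the proposal has a wrong constant, leaves its crux unproved, and has an $L^\infty$ step that fails as written.

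\textbf{The constant.} It is $c_0=\frac14$, not $\frac1{8\pi}$. Your own heuristic $\frac12D\approx\frac14\ln\Omega\int\rho^2$ already says so. For the Coulomb kernel the transverse profile enters at leading order only through its normalization (the $\frac12\ln\Omega$ in $\int K_\Omega$), not through $\int\chi_1^4$. With $c_0=\frac1{8\pi}$, Lemma~\ref{lemA} would give a $(\cosh\frac{x_3}{16\pi})^{-1}$ profile, contradicting $W_0$. With $c_0=\frac14$ you get $e_{1D}=-\frac1{192}$ and $\frac14(\cosh\frac{x_3}{8})^{-1}$.

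\textbf{The crux of Step 2.} What you call the main obstacle is closed by your fallback, which is the paper's route. The crude bound has to be the relative form bound $\frac12D(|u|^2,|u|^2)\le\varepsilon(B_\Omega(u)+G(u))+C_\varepsilon(\ln\Omega)^2$ of \eqref{4.10}, proved via Lemma~\ref{lemB} in relative coordinates; an absolute bound on $D$ is not available before $G$ is controlled. This gives $B_\Omega+G\lesssim(\ln\Omega)^2$, hence $\|Q_\Omega u\|_2^2\lesssim(\ln\Omega)^2\Omega^{-1/2}$ and $\|\nabla Q_\Omega u\|_2\lesssim\ln\Omega$. Then $D(|Q_\Omega u|^2,|Q_\Omega u|^2)\lesssim\|Q_\Omega u\|_2^3\|\nabla Q_\Omega u\|_2\to0$, and all mixed terms are $o((\ln\Omega)^2)$ by Cauchy--Schwarz for the positive-definite Coulomb form. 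The $P_\Omega$-part is then exactly $K_\Omega$ acting on $f^2$. What you still need is the kernel asymptotics uniformly over $f$ with $\|f'\|_2\lesssim\ln\Omega$ (Lemma~\ref{lem4.3}), not just for your trial state.

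Your direct marginal-density route can also be completed, and without any transverse $L^\infty$ control. On each fiber use $\int_{\R^2}|g|^2(|x_\perp-y_\perp|^2+t^2)^{-1/2}dy_\perp\le\min\{C\|g\|_2\|\nabla_\perp g\|_2,\|g\|_2^2/|t|\}$. Combine it with $\|\nabla_\perp u(\cdot,y_3)\|_2^2\le 2\sqrt\Omega\rho_3(y_3)+b(y_3)$, where $\int b=B_\Omega(u)$, and with $G(u)\ge\|(\sqrt{\rho_3})'\|_2^2$. This yields $\frac12D\le\frac14\ln\Omega\int\rho_3^2+o((\ln\Omega)^2)$ and bypasses the projection.

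\textbf{The $L^\infty$ step.} This is where the argument genuinely breaks. In rescaled variables, \eqref{4} reads $-\partial_{x_3}^2w_\Omega+a_\Omega(-\Delta_\perp+|x_\perp|^2-2)w_\Omega=(U_\Omega+\nu_\Omega)w_\Omega$ with $a_\Omega=\sqrt\Omega/(\ln\Omega)^2\to\infty$. It is not uniformly elliptic, so De Giorgi--Moser constants are not uniform in $\Omega$. Stretching $x_3$ by $\sqrt{a_\Omega}$ restores isotropy. Together with $\sup_{x_3}\int|w_\Omega|^2dx_\perp\lesssim1$ this does give a uniform sup bound. But the H\"older modulus in $x_3$ then lives on the scale $a_\Omega^{-1/2}\to0$, so there is no equicontinuity in $x_3$ and Arzel\`a--Ascoli cannot be applied. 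The uniform $L^\infty$ bound on $U_\Omega$ is also only asserted; the paper needs the $P_1/Q_1$ splitting of \eqref{5.15}--\eqref{5.16} to prove it.

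The paper avoids regularity theory altogether. Projecting \eqref{4} gives $-\partial_{x_3}^2P_1w_\Omega=P_1F_\Omega$ with $F_\Omega=(U_\Omega+\nu_\Omega)w_\Omega$. On $\operatorname{Ran}Q_1$ one has $-\Delta+1+|x_\perp|^2\le C(L_\Omega+1)$, where $L_\Omega:=-\partial_{x_3}^2+a_\Omega(A_1-2)$, as commuting nonnegative operators, because $A_1\ge4$ there and $a_\Omega\to\infty$. This gives $\sup_\Omega\|w_\Omega\|_{H^2}<\infty$. Interpolating against the $L^2$ convergence and using $H^{2\alpha}\hookrightarrow L^\infty$ for $\frac34<\alpha<1$ then yields the uniform convergence.
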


Theorem \ref{thm1.2} shows that when the 2D confinement is sufficiently strong (i.e., in the large $\Omega$ limit), the ground state of the three-dimensional nonlinear nonlocal problem undergoes asymptotic variable separation. More precisely, after a suitable translation and rescaling, the minimizer $u_\Omega$ asymptotically factorizes as the product of a two-dimensional Gaussian $\frac{1}{\sqrt\pi}e^{-|x_\perp|^2/2}$--which is the ground state of a $2$D linear harmonic oscillator--and a one-dimensional soliton profile $(4\cosh(x_3/8))^{-1}$--which is the ground state of a $1$D nonlinear local problem. Thus, the full $3$D ground state is asymptotically given by the product of the ground states of a 2D linear problem and a $1$D nonlinear local problem.
This concentration mechanism should be distinguished from the standard dimension-reduction regime under strong anisotropic confinement. In such problems, the strongly confining part of the external potential is usually imposed only in some of the spatial variables,  while the remaining variables are retained as effective lower-dimensional variables; see, e.g., \cite{BAC, BJMZ,RY}. By contrast, in the present Pekar model, no spatial variable remains at the original scale: the transverse variables concentrate at the harmonic-oscillator scale $\Omega^{-1/4}$, whereas the longitudinal variable concentrates at the logarithmic scale $(\ln\Omega)^{-1}$.

The proof of Theorem \ref{thm1.2} proceeds via a sharp energy expansion combined with a transverse spectral decomposition.
In the
concentration analysis of attractive Gross--Pitaevskii minimizers,
precise energy estimates typically determine a single isotropic
blow-up scale, after which the rescaled minimizers converge to a
full-dimensional limiting profile; see, e.g., \cite{GS,GZZ}. Such a
one-scale scheme is not directly applicable here, since the transverse
and longitudinal variables concentrate at different scales and the
limiting profile emerges through asymptotic variable separation.
To
overcome this difficulty, we first establish the sharp energy asymptotics stated in Theorem
\ref{thm3.1} below.
More precisely, inspired by \cite{FG}, we prove that
\begin{equation*}
E_\Omega=2\sqrt{\Omega}-\f{1}{192}(\ln \Omega)^2+\f{1}{24}(\ln \Omega)(\ln \ln \Omega)+O(\ln \Omega)\text{ \ as \ }\Omega\to+\infty.
\end{equation*}
This expansion determines the correct anisotropic scales and provides the starting point for the profile analysis. The leading term $2\sqrt{\Omega}$, together with the spectral gap of the two-dimensional Hermite operator, is used to control the components away from the lowest transverse Hermite mode. The subsequent logarithmic correction then identifies the effective longitudinal scale $(\ln\Omega)^{-1}$.

Compared with the dimension-reduction analysis for NLS with local nonlinearities under strong partial confinement, such as \cite{BMSW,HJ}, the main difficulty here is that the nonlinear term is nonlocal.
For the local nonlinearities considered there,  projection onto
the lowest transverse mode preserves the local structure of the
nonlinearity and directly yields an effective lower-dimensional local
term.
%For a local cubic nonlinearity, after the projection onto the lowest transverse mode, the nonlinear energy directly reduces to a one-dimensional quartic term.
In the present Pekar model, however, the Hartree term remains nonlocal even after the transverse projection. Thus, one has to analyze the effective one-dimensional Coulomb kernel generated by the lowest transverse mode and prove that, after the normalization by $\ln\Omega$, it converges to a local interaction in the
sense of bilinear forms, see Lemma \ref{lem4.3} below. %Moreover, unlike the variational framework in \cite{HJ}, no additional transverse-energy constraint is imposed in our minimization problem, and the required control of the higher transverse modes has to be derived directly from the Pekar energy. A further distinction concerns the scaling of the interaction.
Moreover, in the weak-interaction limit studied in \cite{BAC,HJ},  the interaction coefficients tend to zero as the strength of the transverse confinement
increases.  In contrast, the Coulomb coefficient in \eqref{functional} is fixed. As a result, the reduced Hartree interaction is responsible for both the logarithmic longitudinal scale $(\ln\Omega)^{-1}$ and the limiting one-dimensional profile.

As an application of the above profile convergence, we finally prove uniqueness of positive minimizers for sufficiently strong-confinement.

\begin{Theorem}
\label{large-uniqueness}
There exists a sufficiently large $\Omega_1>0$ such that, for every $\Omega>\Omega_1$, the
positive minimizer of \eqref{inf} is unique up to
translations in the
$x_3$-direction.
\end{Theorem}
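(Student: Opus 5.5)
We argue by contradiction. Suppose there exist $\Omega_k\to+\infty$ and two positive minimizers $u_{1,k}\neq u_{2,k}$ of $E_{\Omega_k}$ that are not $x_3$-translates of each other. By Theorem \ref{existence}, after $x_3$-translations we may take both to be even in $x_3$ and radially decreasing in $x_\perp$. Theorem \ref{thm1.2} then applies with $\tau_\Omega=0$ (evenness forces the centre to be at the origin). We rescale both as
\[
w_{i,k}(x)=\Omega_k^{-\frac14}(\ln\Omega_k)^{-\frac12}u_{i,k}\bigl(\Omega_k^{-\frac14}x_\perp,(\ln\Omega_k)^{-1}x_3\bigr),
\]
so that $w_{i,k}\to W_0$ in $H^1\cap L^\infty$. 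Next we rewrite \eqref{ele} in these variables. Dividing by $\sqrt{\Omega_k}$ gives
\[
(-\Delta_\perp+|x_\perp|^2)w_{i,k}-\varepsilon_k^2\partial_3^2w_{i,k}-\sqrt{\Omega_k}^{-1}\,\Phi_{i,k}\,w_{i,k}=\tilde\mu_{i,k}w_{i,k},
\qquad \varepsilon_k=\Omega_k^{-\frac14}\ln\Omega_k .
\]
Here $\Phi_{i,k}$ is the rescaled Coulomb potential, and $\tilde\mu_{i,k}=\mu_{i,k}/\sqrt{\Omega_k}$. From Theorem \ref{thm3.1} and the identity $\mu=E_\Omega-\frac12\iint$, we get $\tilde\mu_{i,k}=2+\varepsilon_k^2\bigl(\lambda_0+o(1)\bigr)$, where $\lambda_0=-\frac1{64}$ is the one-dimensional multiplier read off from the profile $(4\cosh(x_3/8))^{-1}$.

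Set $\eta_k=(w_{1,k}-w_{2,k})/\|w_{1,k}-w_{2,k}\|_{L^\infty}$. The difference equation splits along the Hermite decomposition $\eta_k=\eta_k^{(0)}(x_3)\,\frac{1}{\sqrt\pi}e^{-|x_\perp|^2/2}+\eta_k^{\perp}$. On the orthogonal part, the transverse operator minus $2$ has gap $\geq 2$, while all the other terms are $O(\varepsilon_k^2)$. This gives $\|\eta_k^\perp\|\lesssim \varepsilon_k^2\|\eta_k\|$ in suitable norms, so $\eta_k^\perp\to0$. Projecting onto the lowest mode and dividing by $\varepsilon_k^2$, the equation for $\eta_k^{(0)}$ becomes
\[
-\partial_3^2\eta_k^{(0)} - \tfrac{1}{\ln\Omega_k}\bigl(K_k*(2\,\phi_k\,\eta_k^{(0)})\bigr)\phi_k-\tfrac{1}{\ln\Omega_k}(K_k*\phi_k^2)\eta_k^{(0)}-\lambda_0\eta_k^{(0)} - c_k\phi_k=o(1),
\]
where $K_k$ is the effective 1D kernel, $\phi_k$ is the lowest-mode coefficient of the $w$'s, and $c_k$ is the normalized difference of the multipliers. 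We first need bounds on $\eta_k^{(0)}$ and $c_k$; these come from testing the equation against $\eta_k$ and using exponential decay of the $w_{i,k}$, which is uniform by a comparison argument. Then Lemma \ref{lem4.3} (bilinear convergence $\frac{1}{\ln\Omega}K_k\to c\,\delta$) lets us pass to the limit. We obtain $\eta_k^{(0)}\to\eta_0$ locally, where $\eta_0$ is even and satisfies
\[
L\eta_0:=-\eta_0''-3a Q^2\eta_0-\lambda_0\eta_0=c\,Q .
\]
Here $Q=(4\cosh(x_3/8))^{-1}$ is the 1D ground state of the local cubic problem, and $a$ is the effective local coefficient.

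The kernel of $L$ is spanned by $Q'$, which is odd, so evenness excludes it. The constant $c$ is fixed by differentiating the $L^2$ constraint: from $\int w_{1,k}^2=\int w_{2,k}^2$ we get $\int Q\eta_0=0$, combined with the explicit identity $L(\partial_\lambda Q_\lambda)=Q$. Nondegeneracy of the 1D soliton then forces $\eta_0=0$ (and $c=0$). Finally, we derive a contradiction with $\|\eta_k\|_\infty=1$. Since $\eta_k^\perp\to0$ and $\eta_k^{(0)}\to0$ locally uniformly, the maximum point of $|\eta_k|$ must escape to infinity. But there the potential term is positive: $|x_\perp|^2$ is large in the transverse direction, and $-\lambda_0>0$ with the Coulomb term decaying in the longitudinal direction. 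The maximum principle applied to the difference equation then rules out such a maximum.

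The main obstacle is the second step, namely obtaining uniform control of the nonlocal term $\frac{1}{\ln\Omega_k}K_k*(\phi_k\eta_k^{(0)})$. We need it in a form strong enough both to pass to the local limit and to keep the maximum-point argument valid. I would first try $L^2$–$H^1$ estimates of the Coulomb difference via Hardy–Littlewood–Sobolev in the rescaled variables, using the $\ln\Omega$ normalization of Lemma \ref{lem4.3}, and afterwards upgrade to $L^\infty$ by elliptic regularity.
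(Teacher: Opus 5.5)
Your route differs from the paper's: you blow up the $L^\infty$-normalized difference, pass to $L\eta_0=cQ$, and kill $\eta_0$ by evenness, $\int Q\eta_0=0$ and $L(\partial_\lambda Q_\lambda)=Q$. That middle part is sound in outline, but the closing maximum-principle step fails as written.

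\textbf{Why the closing step fails.} At the escaping maximum point $x^*$ of $|\eta_k|$, the zeroth-order coefficient of the three-dimensional difference equation is $|x^*_\perp|^2-\tilde\mu_{1,k}-\Omega_k^{-1/2}\Phi_{1,k}(x^*)=|x^*_\perp|^2-2+O(\varepsilon_k^2)$, and this is \emph{negative}. Indeed, $\|\eta_k^\perp\|_{L^\infty}\to0$ and $|\eta_k^{(0)}|\le\|\psi_1\|_{L^1(\mathbb R^2)}\|\eta_k\|_{L^\infty}=2\sqrt\pi$. Hence $|\eta_k(x^*)|=1$ forces $2e^{-|x^*_\perp|^2/2}\ge 1-o(1)$, i.e.\ $|x^*_\perp|^2\le 2\ln 2+o(1)<2$. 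So the maximum escapes along the $x_3$-axis, never transversally. That is exactly where the $O(1)$ term $|x_\perp|^2-2<0$ swamps the favourable $O(\varepsilon_k^2)$ term $-\varepsilon_k^2\lambda_0$. The positivity of $A_1-2$ is spectral, not pointwise, so evaluating the 3D equation at a maximum gives no contradiction. The same objection applies to your ``comparison argument'' for uniform exponential decay of $w_{i,k}$ in $x_3$. You rely on that decay for the bound on $c_k$, for $\int Q\eta_0=0$, and for $L^1$-type control of the nonlocal sources.

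\textbf{Repair.} Run both arguments on the projected one-dimensional equations, where $A_1-2$ drops out and $(2-\tilde\mu_{i,k})/\varepsilon_k^2\to-\lambda_0=\frac1{64}>0$ becomes the leading coefficient. Write $\Omega_k^{-1/2}\Phi_{i,k}=\varepsilon_k^2U_{i,k}$, so $U_{i,k}$ is the paper's $U_\Omega$.

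For decay, $v=\langle\psi_1,w_{i,k}\rangle_{x_\perp}$ satisfies, by positivity of $w_{i,k}$, $-v''+\bigl(\frac1{64}-o(1)-\sup_{x_\perp}U_{i,k}(\cdot,x_3)\bigr)v\le0$. Here $\sup_{x_\perp}U_{i,k}(\cdot,x_3)$ is small for large $|x_3|$ uniformly in $k$, which follows from the $L^\infty$ convergence and monotonicity in $|x_3|$.

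For the contradiction, evaluate $-\eta_k^{(0)\prime\prime}=\frac{\tilde\mu_{1,k}-2}{\varepsilon_k^2}\eta_k^{(0)}+\langle\psi_1,U_{1,k}\eta_k\rangle_{x_\perp}+(\text{nonlocal and multiplier sources})$ at an extremum with $|\eta_k^{(0)}|\ge\sqrt\pi(1-o(1))$. All terms except the first are $o(1)$ there, which gives $0\le-\frac{\sqrt\pi}{64}+o(1)$.

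Also, ``$\|\eta_k^\perp\|\lesssim\varepsilon_k^2\|\eta_k\|$'' cannot come from the $L^2$ spectral gap alone, since $L^\infty$ normalization gives no $L^2$ bound on $\eta_k$. Use instead the uniform $L^\infty$ bound on $(A_1-2-\varepsilon^2\partial_3^2)^{-1}Q_1$ that follows from Mehler's formula, $\|e^{-t(A_1-2)}Q_1\|_{L^\infty\to L^\infty}\le Ce^{-2t}$.

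\textbf{How the paper avoids this.} The paper sidesteps all pointwise issues. It proves coercivity of the second variation $\mathcal Q_\Omega$ on even functions $L^2$-orthogonal to $w_\Omega$ (Proposition \ref{0prlo}). This uses the gap $a_\Omega(A_1-2)\ge 2a_\Omega$ on $\operatorname{Ran}Q_1$, the form bounds of Lemma \ref{0lnc}, and coercivity of $q_{1d}$. It then Taylor-expands $\mathcal J_\Omega(\widetilde w_\Omega)=\mathcal J_\Omega(w_\Omega)$ to get $0\ge(\frac{c_0}{2}+o(1))\|R_\Omega\|_2^2$. No decay of minimizers or of their difference is ever needed.
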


The proof requires a mechanism different from the existing
local-uniqueness arguments for concentrating Gross--Pitaevskii
minimizers. In the local setting, after a single blow-up rescaling, the
normalized difference of two minimizers is reduced to a linear
combination of the scaling and translation modes of the limiting
equation. The corresponding coefficients can then be eliminated by
local Pohozaev identities and the nondegeneracy of a finite-dimensional
location function; see, e.g., \cite{GLW}. This argument cannot be
directly adapted here. Indeed, the transverse and longitudinal
variables concentrate at the different scales $\Omega^{-1/4}$ and
$(\ln\Omega)^{-1}$, respectively, while the Hartree interaction is
nonlocal. Moreover, the exact translation invariance in the
$x_3$-direction makes the longitudinal location mode genuinely
degenerate, which explains why uniqueness can hold only modulo
longitudinal translations.

Our approach is instead based on a uniform coercivity estimate for the
linearized quadratic form on even perturbations satisfying the mass
orthogonality condition.
%Since the longitudinal translation mode is odd in $x_3$, it is automatically excluded from this subspace.
For an admissible perturbation $\phi$,
we use the transverse decomposition
$$  \phi=P_1\phi+Q_1\phi,$$
where $P_1\phi$ belongs to the lowest transverse Hermite mode and
$Q_1\phi$ contains all higher modes. The latter is controlled by the
spectral gap of the transverse harmonic oscillator, whereas the former
is governed in the limit by a one-dimensional linearized quadratic
form.

The main difficulty is that the nonlocal Hartree interaction couples
these two components. Lemma \ref{0lnc} provides estimates showing that the Hartree terms
involving the higher transverse modes, as well as the mixed terms, can
be controlled by the positive kinetic and transverse spectral-gap
terms, together with
an $L^2$-remainder involving the
higher-mode component $Q_1\phi$.  Combining these estimates
with the coercivity of the limiting one-dimensional quadratic form and
the transverse spectral gap, we obtain the uniform coercivity estimate stated
 in Proposition \ref{0prlo}. This coercivity estimate
then yields the uniqueness asserted in Theorem
\ref{large-uniqueness}.

This paper is organized as follows. In Section \ref{S2}, we prove Theorem \ref{existence}, which establishes the existence of minimizers for \eqref{inf} and some of their basic properties. In Section \ref{S3}, we derive the refined asymptotic expansion of $E_\Omega$ as $\Omega\to+\infty$. Based on this expansion, we prove Theorem \ref{thm1.2} in Section \ref{S4}. Section \ref{S5} is devoted to the proof of Theorem \ref{large-uniqueness}, which gives the uniqueness of positive minimizers for sufficiently large $\Omega>0$. Finally, Theorem \ref{small-uniqueness}, concerning the uniqueness of positive minimizers for sufficiently small $\Omega>0$, is proved in the Appendix \ref{Appendix}.

\section{Existence and Qualitative Properties of Minimizers}\label{S2}

In this section, we first prove the existence part of Theorem \ref{existence} by the concentration-compactness principle (cf. \cite{Lions}). Next, we establish the qualitative properties of  positive minimizers, namely the radial symmetry and monotonicity with respect to $|x_\perp|$, and the even symmetry and monotonicity with respect to $|x_3|$ after a suitable translation in the $x_3$-direction.

We will use the following Coulomb-type
Gagliardo--Nirenberg inequality:
\begin{equation}\label{GN}
\iint_{\R^3\times\R^3}\f{|u(x)|^2|u(y)|^2}{|x-y|}dxdy\leq S \Big(\int_{\R^3}|u|^2dx\Big)^{\f{3}{2}}\Big(\int_{\R^3}|\nabla u|^2dx\Big)^{\f{1}{2}}, \ \ u\in H^1(\R^3),
\end{equation}
for some constant $S>0$.
This inequality follows from the
Hardy-Littlewood-Sobolev inequality (cf. \cite[Theorem 4.3]{Lieb1}) and the interpolation inequality.
It then follows from (\ref{GN}) that for every $u\in\mathcal H$ with $\|u\|_2=1$,
\begin{equation}\label{2.1}
\begin{aligned}
\E_{\Omega}(u)
\geq&\int_{\R^3}|\n u|^2dx+\Omega\int_{\R^3}|x_\perp|^2|u|^2dx-\f{ S}{2} \Big(\int_{\R^3}|u|^2dx\Big)^{\f{3}{2}}\Big(\int_{\R^3}|\nabla u|^2dx\Big)^{\f{1}{2}}\\
\geq&\f34\int_{\R^3}|\n u|^2dx+\Omega\int_{\R^3}|x_\perp|^2|u|^2dx-\f{S^2}{4},
\end{aligned}
\end{equation}
which implies that $\E_{\Omega}(u)$ is bounded from below.
Thus, for any fixed $\Omega>0$, there exists a minimizing sequence $\{u_n\}\subset\H$ with $\|u_n\|_2=1$ satisfying $\lim_{n\to\infty}\E_{\Omega}(u_n)=E_{\Omega}$. Moreover, it follows from (\ref{2.1}) that $\{u_n\}$ is bounded uniformly in $\H$.

We begin with two preliminary lemmas. The first one gives a non-vanishing
property of the Coulomb interaction along any minimizing sequence, while the
second one excludes vanishing up to translations in the free $x_3$-direction.

\begin{Lemma}\label{lem2.1}
Let $\{u_n\}\subset\H$ be a minimizing sequence of (\ref{inf}), then we have
\begin{equation}\label{2.2}
\liminf_{n\to\infty}\iint_{\R^3\times\R^3}\f{|u_n(x)|^2|u_n(y)|^2}{|x-y|}dxdy>0.
\end{equation}
\end{Lemma}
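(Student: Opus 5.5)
The argument is by contradiction, combined with a strict upper bound on $E_\Omega$ obtained from a trial function. The key observation is that without the Coulomb term the infimum of the quadratic part over normalized $u\in\H$ equals $\lambda_1:=\inf\sigma(-\Delta+\Omega|x_\perp|^2)=2\sqrt{\Omega}$. This is the lowest eigenvalue of the 2D harmonic oscillator $-\Delta_\perp+\Omega|x_\perp|^2$, while $-\p_3^2\ge0$ contributes no positive bottom. Hence for every $u\in\H$ with $\|u\|_2=1$,
\begin{equation*}
\int_{\R^3}|\n u|^2dx+\Omega\int_{\R^3}|x_\perp|^2|u|^2dx\geq 2\sqrt{\Omega}.
\end{equation*}
Suppose, along a subsequence, the Coulomb term $D(u_n):=\iint|u_n(x)|^2|u_n(y)|^2|x-y|^{-1}dxdy$ tends to $0$. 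Then $E_\Omega=\lim\E_\Omega(u_n)\ge 2\sqrt\Omega$. So it suffices to show the strict inequality $E_\Omega<2\sqrt\Omega$.

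To get the strict bound, take the product trial function
\begin{equation*}
u_\lambda(x)=\varphi_\Omega(x_\perp)\,\lambda^{1/2}f(\lambda x_3),
\end{equation*}
where $\varphi_\Omega(x_\perp)=\Omega^{1/4}\pi^{-1/2}e^{-\sqrt\Omega|x_\perp|^2/2}$ is the normalized Gaussian ground state, $f\in C_c^\infty(\R)$ with $\|f\|_{L^2(\R)}=1$, and $\lambda>0$ is small. Then
\begin{equation*}
\E_\Omega(u_\lambda)=2\sqrt\Omega+\lambda^2\int_\R|f'|^2dx_3-\f12 D(u_\lambda).
\end{equation*}
The quadratic part is explicit. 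The point is to show that $D(u_\lambda)$ decays more slowly than $\lambda^2$. The plan is to bound $D(u_\lambda)$ from below by restricting to $|x_\perp|,|y_\perp|\le R$, where $R$ is fixed so that the Gaussian mass there is at least $1/2$. On that region, $|x-y|\le 2R+|x_3-y_3|\le 2R+2L/\lambda$ when $f$ is supported in $[-L,L]$. This gives
\begin{equation*}
D(u_\lambda)\geq \f{c}{2R+2L/\lambda}\geq c'\lambda
\end{equation*}
for $\lambda\le1$. Actually the true behaviour is of order $\lambda\ln(1/\lambda)$, but $c'\lambda$ already suffices. Therefore
\begin{equation*}
\E_\Omega(u_\lambda)\le 2\sqrt\Omega+C\lambda^2-c''\lambda<2\sqrt\Omega
\end{equation*}
for $\lambda$ small. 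Since $u_\lambda\in\H$, this yields $E_\Omega<2\sqrt\Omega$, contradicting $E_\Omega\ge2\sqrt\Omega$, and \eqref{2.2} follows.

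The main care points are two. The first is justifying the spectral lower bound $2\sqrt\Omega$. I would do this by integrating the 2D oscillator inequality $\int(|\n_\perp v|^2+\Omega|x_\perp|^2|v|^2)dx_\perp\ge2\sqrt\Omega\int|v|^2dx_\perp$ over $x_3$ and discarding $\int|\p_3u|^2\ge0$. The second is making the Coulomb lower bound uniform with the right $\lambda$-power. Neither is serious; the crucial structural fact is that a 1D spreading costs $\lambda^2$ kinetic energy but gains at least order $\lambda$ from the Coulomb attraction.
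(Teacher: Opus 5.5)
Your proof is correct and follows the paper's argument. If the Coulomb term vanishes along a subsequence, the spectral bound gives $E_\Omega\geq 2\sqrt{\Omega}$, while the trial function (transverse Gaussian ground state times a dilated one-dimensional profile) gives $E_\Omega<2\sqrt{\Omega}$, since the kinetic cost is $O(\lambda^2)$ and the Coulomb gain is at least of order $\lambda$. The one difference is minor: the paper uses a Gaussian profile and gets the order-$\lambda$ Coulomb lower bound by comparing kernels under scaling (Coulomb energy of $w_\mu$ at least $\mu$ times that of $w_1$ for $0<\mu<1$), whereas you use a compactly supported profile with a diameter bound.
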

\noindent{\bf Proof.}
For any fixed $\Omega>0$, define
\begin{equation}\label{2.3}
\Lambda_{0}:=\inf_{\substack{w\in\H\\\int_{\R^3}|w|^2dx=1}}\Big(\int_{\R^3}|\nabla w|^2dx
+\Omega\int_{\R^3}|x_\perp|^2|w|^2dx\Big),
\end{equation}
and
\begin{equation}\label{2.4}
\lambda_{0}:=\inf_{\substack{\int_{\R^2}|\psi|^2dx_{\perp}=1\\\psi\in\{\psi\in H^1(\R^2):\int_{\R^2}(|\nabla \psi|^2+|x_\perp|^2|\psi|^2)dx_{\perp}<\infty\}}}\Big(\int_{\R^2}|\nabla \psi|^2dx_{\perp}
+\Omega\int_{\R^2}|x_\perp|^2|\psi|^2dx_{\perp}\Big).
\end{equation}
Similar to \cite[Lemma 2.1]{BBJV}, one can check that $\Lambda_{0}=\lambda_{0}$. Moreover, it can be calculated that $\lambda_{0}=2\sqrt{\Omega}$, together with a corresponding unique positive minimizer
\begin{equation}\label{psi0}
\psi_0(x_{\perp})=\frac{\Omega^\f14}{\sqrt{\pi}}e^{-\frac{\sqrt{\Omega}|x_{\perp}|^2}{2}}.
\end{equation}

Assume by contradiction that (\ref{2.2}) is false. Then, there exists a subsequence $\{u_{n_k}\}$ of $\{u_n\}$ such that
\begin{equation*}
\liminf_{n\to\infty}\iint_{\R^3\times\R^3}\f{|u_n(x)|^2|u_n(y)|^2}{|x-y|}dxdy
=\lim_{n_k\to\infty}\iint_{\R^3\times\R^3}\f{|u_{n_k}(x)|^2|u_{n_k}(y)|^2}{|x-y|}dxdy
=0,
\end{equation*}
and
%\begin{equation}\label{2.5}
\begin{align}
E_{\Omega}=&\lim_{n_k\to\infty}\E_{\Omega}(u_{n_k})\label{2.5}\\
%=&\int_{\R^3}|\n u_{n_k}|^2dx+\Omega\int_{\R^3}|x_\perp|^2|u_{n_k}|^2dx-\f12\iint_{\R^3\times\R^3}\f{|u_{n_k}(x)|^2|u_{n_k}(y)|^2}{|x-y|}dxdy
%+o(1)\label{2.5}\\
=&\int_{\R^3}|\n u_{n_k}|^2dx+\Omega\int_{\R^3}|x_\perp|^2|u_{n_k}|^2dx+o(1)
\geq \Lambda_{0}+o(1)\ \ \text{as}\ \ n_k\to\infty,\nonumber
\end{align}
%\end{equation}
where $o(1)$ satisfies $\lim_{{n_k}\to\infty}o(1)=0$. We thus derive from (\ref{2.5}) that
\begin{equation}\label{2.9}
E_{\Omega}\geq \Lambda_{0}.
%\ \ \text{for any}\ \ r>0.
\end{equation}

On the other hand, we claim that
\begin{equation}\label{2.6}
E_{\Omega}< \Lambda_{0}.%\ \ \text{for any}\ \ r>0.
\end{equation}
In fact, set
$$w_{\mu}(x)=\psi_0(x_{\perp})\varphi_{\mu}(x_3)\in\H,\ \ 0<\mu<1,$$
where $\psi_0(x_{\perp})$ is given by \eqref{psi0}, and $\varphi_{\mu}(x_3)=\sqrt{\mu}\varphi(\mu x_3)$ with $\varphi(x_3)=\Big(\f2\pi\Big)^{\f14}e^{-x_3^2}$, so that $\int_\R|\varphi_{\mu}(x_3)|^2dx_3=1$. We then deduce from (\ref{functional}), (\ref{2.3}) and (\ref{2.4}) that
\begin{align}
\E_{\Omega}(w_{\mu})
=&\Lambda_0+\int_{\R}|\varphi_{\mu}'(x_3)|^2dx_3-\f12\iint_{\R^3\times\R^3}\f{|w_{\mu}(x)|^2
|w_{\mu}(y)|^2}{|x-y|}dxdy\label{2.7}\\
\leq&\Lambda_0+\mu^2\int_{\R}|\varphi'(x_3)|^2dx_3 -\f{\mu}{2}\iint_{\R^3\times\R^3}\f{|\psi_0(x_{\perp})|^2
|\varphi(x_3)|^2|\psi_0(y_{\perp})|^2
|\varphi(y_3)|^2}{|x-y|}dxdy \nonumber\\
<& \Lambda_0\quad \text{ for sufficiently small  }\quad \mu>0.\nonumber
\end{align}
We thus conclude from (\ref{2.7}) that the claim (\ref{2.6}) holds true. However, this is a contradiction with (\ref{2.9}). Thus, (\ref{2.2}) holds true, and the proof of Lemma \ref{lem2.1} is therefore complete.\qed

It follows from Lemma \ref{lem2.1} that there exist constants $\varepsilon_0>0$ and
$N_0>0$ such that
\begin{equation}\label{2.10}
\iint_{\R^3\times\R^3}\f{|u_n(x)|^2|u_n(y)|^2}{|x-y|}dxdy>\varepsilon_0\ \ \text{for all}\ \ n \geq N_0.
\end{equation}
%Following the concentration-compactness principle of  Lions (cf. \cite{Lions}), we first exclude the vanishing of the minimizing sequence.
We now use this lower bound to exclude the vanishing alternative in the
concentration-compactness principle of Lions \cite{Lions}.

\begin{Lemma}\label{lem2.2}
Let $\{u_n\}$ be a minimizing sequence of $E_{\Omega}$, then there exist a sequence $\{x_n=(0,0,z_n)\}\subset\R^3$ and a constant $R>0$ such that
\begin{equation}
\liminf_{n\to\infty}\int_{B_R(x_n)}|u_n|^2dx>0.
\end{equation}
\end{Lemma}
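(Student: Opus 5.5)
We argue by contradiction, combining Lions' vanishing lemma with the nonvanishing of the Coulomb term from Lemma \ref{lem2.1}. Suppose the conclusion fails. Then for every $R>0$ we have, along a subsequence,
\[
\sup_{z\in\R}\int_{B_R((0,0,z))}|u_n|^2dx\to0 .
\]
The first step is to upgrade this to vanishing over all centers $y\in\R^3$, i.e.
\[
\sup_{y\in\R^3}\int_{B_R(y)}|u_n|^2dx\to0 .
\]
Balls centered at points with $|y_\perp|\le 2R$ lie inside a cylinder $\{|x_\perp|\le 3R\}\times(z-R,z+R)$. Such a cylinder is covered by a number $C(R)$ of balls of radius $R$ centered on the $x_3$-axis, after replacing $R$ by a larger radius $R'$ (e.g. $R'=4R$). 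So vanishing along the axis with radius $R'$ controls these balls uniformly.

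For centers with $|y_\perp|>2R$, every point of $B_R(y)$ satisfies $|x_\perp|\ge|y_\perp|-R\ge R$. Hence
\[
\int_{B_R(y)}|u_n|^2\le R^{-2}\int|x_\perp|^2|u_n|^2\le CR^{-2},
\]
uniformly in $n$, using the uniform $\H$-bound from \eqref{2.1}. This is small for $R$ large, but not for fixed $R$. I will therefore not use this route directly. Instead I cut off in $x_\perp$: fix a large $L$ and split $u_n=u_n\chi_{\{|x_\perp|\le L\}}+u_n\chi_{\{|x_\perp|>L\}}$. The outer part has $L^2$ mass at most $CL^{-2}$ uniformly in $n$.

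The second step is to show the Coulomb term tends to zero. By Hardy--Littlewood--Sobolev,
\[
D(f,g)\le C\|f\|_{6/5}\|g\|_{6/5},
\]
and $\|f\|_{6/5}$ for $f=u_nv_n$ is controlled by $\|u_n\|_{12/5}\|v_n\|_{12/5}$. So it suffices to show $\|u_n\|_{12/5}\to0$ up to contributions small in $L$. Rather than localizing the Lions lemma, the cleanest approach is a localized interpolation argument. Cover $\R^3$ by unit cubes $Q$ and use
\[
\|u\|_{L^{12/5}(Q)}^{12/5}\le C\|u\|_{L^2(Q)}^{\theta}\|u\|_{H^1(Q)}^{2}
\]
for a suitable $\theta>0$ (Gagliardo--Nirenberg on a cube, since $12/5\in(2,6)$). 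Summing over cubes gives
\[
\|u_n\|_{12/5}^{12/5}\le C\Big(\sup_Q\|u_n\|_{L^2(Q)}\Big)^{\theta}\|u_n\|_{H^1}^2 .
\]
It therefore remains to control $\sup_Q\|u_n\|_{L^2(Q)}$. Cubes meeting $\{|x_\perp|\le L\}$ are covered by axis-centered balls of radius about $L$, whose mass tends to zero by the contradiction hypothesis with $R=L+2$. Cubes with $|x_\perp|>L$ carry mass at most $CL^{-2}$ by the weighted bound. Thus
\[
\limsup_n\|u_n\|_{12/5}\le C L^{-\theta'}
\]
for some $\theta'>0$, and letting $L\to\infty$ gives $u_n\to0$ in $L^{12/5}$.

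Consequently $D(|u_n|^2,|u_n|^2)\to0$, contradicting \eqref{2.10}. This proves the existence of $R$ and $z_n$. The main point requiring care is the uniform covering argument that converts vanishing along the $x_3$-axis into vanishing of $\sup_Q$ over all unit cubes via the confinement weight. The cube-wise interpolation inequality itself is standard.
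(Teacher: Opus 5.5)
Your proof is correct, but it is organized differently from the paper's.

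\textbf{The paper's route.} It first proves nonvanishing with arbitrary centers, $\liminf_n\sup_{y\in\R^3}\int_{B_{r_0}(y)}|u_n|^2dx>0$. This uses the standard Lions lemma, HLS and Lemma \ref{lem2.1}. It then projects the centers onto the $x_3$-axis, which gives a lower bound on the slabs $\R^2\times(z_n-r,z_n+r)$. Next it uses the compact embedding $\H_r\hookrightarrow L^2(T_r)$ to extract a subsequence whose translates converge to some $u\not\equiv0$. Finally it chooses $R$ with $\int_{B_R}|u|^2dx>0$.

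\textbf{Your route.} You build the confinement into the proof of the vanishing lemma itself. In the cube-wise interpolation (your $\theta=2/5$ works), you split the cubes into two groups. Cubes meeting $\{|x_\perp|\le L\}$ are controlled by axis-centered balls of radius $L+2$. The remaining cubes are controlled by the Chebyshev bound $\int_{\{|x_\perp|>L\}}|u_n|^2dx\le CL^{-2}$, which comes from \eqref{2.1}.

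\textbf{Comparison.} Your route is more elementary: it needs no compact embedding and no appeal to \cite{Z}. It also gives the statement for the whole sequence. The paper's compactness step passes to a subsequence, which is all its later dichotomy argument needs. In exchange, the paper uses Lions' lemma as a black box and obtains a nontrivial limit profile as a by-product. The paper's last step could in fact be replaced by your Chebyshev bound. A ball $B_{r_0}(x_n')$ carrying mass $\ge\varepsilon$ must meet $\{|x_\perp|\le L\}$ once $CL^{-2}<\varepsilon$, and hence lies in $B_{L+2r_0}((0,0,z_n))$.

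\textbf{A bookkeeping fix.} Your contradiction hypothesis only gives, for each $R$, that $\liminf_n\sup_{z}\int_{B_R((0,0,z))}|u_n|^2dx=0$. The subsequence therefore depends on $R=L+2$. Axis-centered vanishing at one radius does not imply it at a larger radius, since the larger balls see more of the transverse plane. So ``letting $L\to\infty$ gives $u_n\to0$ in $L^{12/5}$'' is not justified along a single subsequence. Phrase it instead as $\liminf_n\|u_n\|_{12/5}\le CL^{-1/6}$ for every $L$. This gives $\liminf_n\iint\frac{|u_n(x)|^2|u_n(y)|^2}{|x-y|}dxdy=0$, which contradicts Lemma \ref{lem2.1} directly. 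Alternatively, select indices $n_k$ diagonally.

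Your abandoned first step (upgrading to vanishing over all centers) is never used and can be deleted.
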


\noindent{\bf Proof.} We first claim that there exists a constant $r_0>0$ such that
\begin{equation}\label{NV}
\liminf_{n\to\infty}\sup_{y\in\R^3}\int_{B_{r_0}(y)}|u_n|^2dx>0.
\end{equation}
Otherwise, there exists a subsequence, still denoted by $\{u_n\}$, of $\{u_n\}$ such that
\begin{equation}\label{A2-2}
\lim_{n\to\infty}\sup_{y\in\R^3}\int_{B_r(y)}|u_n|^2dx=0\text{ \ for any fixed \ }r>0.
\end{equation}
We then  obtain from  \cite[Lemma 1.21]{Willem} that $u_n\to0$ strongly in $L^p(\R^3)$ as $n\to\infty$ for $2<p<6$. Therefore, using the Hardy-Littlewood-Sobolev inequality (cf. \cite[Theorem 4.3]{Lieb1}) we deduce that
\begin{equation*}
\iint_{\R^3\times\R^3}\f{|u_n(x)|^2|u_n(y)|^2}{|x-y|}dxdy\leq C\Big(\int_{\R^3}|u_{n}(x)|^{\f{12}{5}}dx\Big)^{\f53}\to0\text{ \ as \ }n\to\infty,
\end{equation*}
which contradicts with \eqref{2.10}, and hence \eqref{NV} holds true. It follows from \eqref{NV} that there exists a sequence $\{x'_n\}\subset\R^3$ such that
\begin{equation}\label{NV1}
\liminf_{n\to\infty}\int_{\R^2\times(z_n-r,z_n+r)}|u_n|^2dx\geq\liminf_{n\to\infty}\int_{B_{r_0}(x'_n)}|u_n|^2dx>0\text{ \ for every \ }r\geq r_0,
\end{equation}
where $z_n\in\R$ is the third component of $x'_n$.

Similar to \cite[Lemma 3.1]{Z}, one can check that the embedding
\begin{equation}\label{ceb}
\H_r:=\Big\{u\in H^1(T_r): \int_{T_r}|x_\perp|^2u^2dx<\infty\Big\}\hookrightarrow L^{2}(T_r)
\end{equation}
is compact for every fixed $r>0$, where $T_r=\R^2\times(-r,r)$.
We then obtain from \eqref{NV1} and \eqref{ceb} that there exist a subsequence, still denoted by $\{u_n\}$, of $\{u_n\}$ and a function $u\in \H$ such that
$$u_n(x+(0,0,z_n))\to u\not\equiv0\ \ \text{strongly in}\ \ L^2(T_r)\ \ \text{for every $r\geq r_0$ as}\ \ n\to\infty,$$
which further implies that Lemma \ref{lem2.2} holds true by taking sufficiently large $R>r_0$ such that $\int_{B_R(0)}|u|^2dx>0$.\qed

\vskip 0.05truein

We are now ready to prove the existence part of Theorem \ref{existence}.

\vskip 0.05truein

\noindent{\bf Proof of the existence part of Theorem \ref{existence}.}
It suffices to prove that the dichotomy cannot occur. To this end, we assume that the asymptotic mass
\begin{equation}
\lim_{R\to\infty}\liminf_{n\to\infty}\int_{B_R(x_n)}|u_n|^2dx=:\beta\in(0,1),
\end{equation}
where the sequence $\{x_n\}\subset \R^3$ is given by Lemma \ref{lem2.2}. Since the functional \eqref{functional} is invariant under the translation with respect to the third component, we assume without loss of generality that $x_n\equiv0$ for all $n\in\N^+$. There then exist (cf. \cite{Lions}) a subsequence, still denoted by $\{u_n\}$, of $\{u_n\}$ and a sequence $\{R_n\}\subset\R^+$ with  $\lim_{n\to\infty}R_n=\infty$  such that
\begin{equation}\label{Levi1}
\lim_{n\to\infty}\int_{B_{R_n}}|u_n|^2dx=\beta, \lim_{n\to\infty}\int_{B_{2R_n}^c}|u_n|^2dx=1-\beta,
\end{equation}
and
\begin{equation}\label{Levi2}
\lim_{n\to\infty}\int_{B_{2R_n}\setminus B_{R_n}}|u_n|^2dx=0,
\end{equation}
which implies from the interpolation inequality that
\begin{equation}\label{ann}
\lim_{n\to\infty}\int_{B_{2R_n}\setminus B_{R_n}}|u_n|^pdx=0\text{ \ for every \ } p\in[2,6).
\end{equation}
 Here and in what follows, we use $B_R$ to represent $B_R(0)$.

Choose a cut-off function $f\in C_c^\infty(\R^3,[0,1])$ such that $f(x)\equiv1$ for $|x|<1$ and $f(x)\equiv0$ for $|x|>2$. Define
\begin{equation}{\label{fg}}
f_n(x):=f\big(x/R_n\big)\ \ \text{and}\ \  g_n(x):=g\big(x/R_n\big),
\end{equation}
where $g(x):=\sqrt{1-f^2(x)}$ and $R_n$ is as in \eqref{Levi1}. By the IMS formula (cf. \cite[Theorem 3.2]{CFKS87}), we have
\begin{align}
\int_{\R^3}|\n u_n|^2dx
&=\int_{\R^3}|\n (f_nu_n)|^2dx+\int_{\R^3}|\n (g_nu_n)|^2dx-\int_{\R^3}(|\n f_n|^2+|\n g_n|^2)|u_n|^2dx\nonumber\\
&=\int_{\R^3}|\n (f_nu_n)|^2dx+\int_{\R^3}|\n (g_nu_n)|^2dx+o(1)\quad \text{ as  }\quad  n\to\infty.\label{trdn3}
\end{align}
Moreover, one can deduce  that
\begin{equation}\label{V3}
\int_{\R^3}|x_\perp|^2|u_n|^2dx=\int_{\R^3}|x_\perp|^2|f_nu_n|^2dx+\int_{\R^3}|x_\perp|^2|g_nu_n|^2dx,
\end{equation}
and
\begin{equation*}
\begin{split}
&\iint_{\R^3\times\R^3}\f{|u_n(x)|^2|u_n(y)|^2}{|x-y|}dxdy\\
=&\iint_{\R^3\times\R^3}\f{|f_n(x)u_n(x)|^2|f_n(y)u_n(y)|^2}{|x-y|}dxdy+\iint_{\R^3\times\R^3}\f{|g_n(x)u_n(x)|^2|g_n(y)u_n(y)|^2}{|x-y|}dxdy\\
&+2\iint_{\R^3\times\R^3}\f{|f_n(x)u_n(x)|^2|g_n(y)u_n(y)|^2}{|x-y|}dxdy.\\
\end{split}
\end{equation*}
Using Hardy-Littlewood-Sobolev inequality, it yields that
\begin{equation*}
\begin{aligned}
\iint_{\R^3\times\R^3}\f{|f_n(x)u_n(x)|^2|g_n(y)u_n(y)|^2}{|x-y|}dxdy
\leq& \iint_{\R^3\times\R^3}\f{|\mathds{1}_{B_{2R_n}}(x)u_n(x)|^2
|\mathds{1}_{B_{R_n}^c}(y)u_n(y)|^2}{|x-y|}dxdy\\
%=&\iint_{\R^3\times\R^3}\f{|\mathds{1}_{B_{R_n}}(x)u_n(x)|^2
%|\mathds{1}_{B_{2R_n}^c}(y)u_n(y)|^2}{|x-y|}dxdy\nonumber\\
%&+\iint_{\R^3\times\R^3}\f{|\mathds{1}_{B_{R_n}}(x)u_n(x)|^2|\mathds{1}_{B_{2R_n}\setminus B_{R_n}}(y)u_n(y)|^2}{|x-y|}dxdy\nonumber\\
%&+\iint_{\R^3\times\R^3}\f{|\mathds{1}_{B_{2R_n}\setminus B_{R_n}}(x)u_n(x)|^2|\mathds{1}_{B_{2R_n}^c}(y)u_n(y)|^2}{|x-y|}dxdy\nonumber\\
%&+\iint_{\R^3\times\R^3}\f{|\mathds{1}_{B_{2R_n}\setminus B_{R_n}}(x)u_n(x)|^2|\mathds{1}_{B_{2R_n}\setminus B_{R_n}}(y)u_n(y)|^2}{|x-y|}dxdy\nonumber\\
\leq& C\Big(\f{1}{R_n}+\big(\int_{\R^3}|\mathds{1}_{B_{2R_n}\setminus B_{R_n}}u_n|^{\f{12}{5}}dx\big)^{\f{5}{6}}\Big)
=o(1)\text{ \ as \ }n\to\infty,\nonumber
\end{aligned}
\end{equation*}
where we used \eqref{ann} in the last inequality. Therefore,
\begin{align}
\iint_{\R^3\times\R^3}\f{|u_n(x)|^2|u_n(y)|^2}{|x-y|}dxdy
=&\iint_{\R^3\times\R^3}\f{|f_n(x)u_n(x)|^2|f_n(y)u_n(y)|^2}{|x-y|}dxdy \label{nonl}\\
&+\iint_{\R^3\times\R^3}\f{|g_n(x)u_n(x)|^2|g_n(y)u_n(y)|^2}{|x-y|}dxdy+o(1)\nonumber
\end{align}
as $n\to\infty$.
It then follows from \eqref{functional} and \eqref{trdn3}--\eqref{nonl} that
\begin{equation}\label{gfg3}
\begin{split}
\E_\Om(u_n)&=\E_\Om(f_nu_n)+\E_\Om(g_nu_n)+o(1)\text{ \ as \ }n\to\infty.
\end{split}
\end{equation}

For every $\lambda>0$, we have
\begin{equation*}
\begin{split}
\E_\Om(\lambda u)&=\lambda^2\Big(\int_{\R^3}|\n u|^2dx+\Omega\int_{\R^3}|x_\perp|^2|u|^2dx\Big)-\f{\lambda^{4}}{2}\iint_{\R^3\times\R^3}\f{|u(x)|^2|u(y)|^2}{|x-y|}dxdy\\
&=\lambda^{4}\E_\Om(u)+\lambda^2(1-\lambda^{2})\Big(\int_{\R^3}|\n u|^2dx+\Omega\int_{\R^3}|x_\perp|^2|u|^2dx\Big).
\end{split}
\end{equation*}
One can then deduce that
\begin{equation}\label{eag2}
\E_\Om(u)=\lambda^{-4}\E_\Om(\lambda u)+(1-\lambda^{-2})\Big(\int_{\R^3}|\n u|^2dx+\Omega\int_{\R^3}|x_\perp|^2|u|^2dx\Big).
\end{equation}
Take
\begin{equation*}
\lambda_n=\f{1}{\|f_nu_n\|_2}\text{ \ and \ }\lambda_n'={\f{1}{\|g_nu_n\|_2}},
\end{equation*}
so that $\|\lambda_n u_n f_n\|_2=1$ and  $\|\lambda_n'u_ng_n\|_2=1$.
It then follows from \eqref{eag2} that
\begin{equation*}\label{d3}
\begin{split}
\E_\Om(f_nu_n)
= &(\lambda_n)^{-4 }\E_\Om(\lambda_nf_nu_n)+(1-\lambda_n ^{-2})\Big(\int_{\R^3}|\n (f_nu_n)|^2dx+\Omega\int_{\R^3}|x_\perp|^2|f_nu_n|^2dx\Big)\\
\geq& \big\|f_nu_n\big\|_2^{4}E_\Om+\Big(1-\|f_nu_n\big\|_2^{2}\Big)\Lambda_0\|f_nu_n\|_2^2,
\end{split}
\end{equation*}
where $\Lambda_0$ is defined in \eqref{2.3}. Noting from \eqref{Levi1} and \eqref{Levi2} that $\|f_nu_n\|_2^2\to\beta$ as $n\to\infty$, we then deduce that
\begin{equation}\label{2.29}
\E_\Om(f_nu_n)\geq \beta^2 E_\Om+(1-\beta)\beta\Lambda_0+o(1)\text{ \ as \ }n\to\infty.
\end{equation}
Similarly, since $\|g_nu_n\|_2^2\to1-\beta$ as $n\to\infty$, one can deduce that
\begin{equation}\label{2.30}
\E_\Om(g_nu_n)\geq (1-\beta)^2 E_\Om+(1-\beta)\beta\Lambda_0+o(1)\text{ \ as \ }n\to\infty.
\end{equation}
It then follows from \eqref{gfg3}, \eqref{2.29} and \eqref{2.30} that
\begin{equation}
\begin{split}
E_\Om= \lim_{n\to\infty}\E_\Om(u_n)
\geq (2\beta^2-2\beta+1)E_\Om+2\beta(1-\beta)\Lambda_0,
\end{split}
\end{equation}
which implies that $E_\Om\geq\Lambda_0$ in view of the fact $0<\beta<1$. This contradicts the strict inequality \eqref{2.6}.

Thus, we conclude that there exist a subsequence, still denoted by $\{u_n\}$, of $\{u_n\}$ and a function $u\in\H$ such that
\begin{equation*}
 u_n\to u\text{ \ strongly in \ }L^2(\R^3) \text{ \ as \ } n\to\infty,%\text{ \ for \ }2\leq p<6,
\end{equation*}
which implies that
\begin{equation*}\label{2.24}
\lim_{n\to\infty}\iint_{\R^3\times\R^3}\f{|u_n(x)|^2|u_n(y)|^2}{|x-y|}dxdy\\
=\iint_{\R^3\times\R^3}\f{|{u}(x)|^2|{u}(y)|^2}{|x-y|}dxdy
\end{equation*}
in view of \cite[Lemma 2.2]{BFV}.
%By the weak lower semicontinuity,
Applying Fatou's lemma, we further deduce that
\begin{align}
E_{\Omega}
=&\lim_{n\to\infty}\Big(\int_{\R^3}|\n u_{n}|^2dx+\Omega\int_{\R^3}|x_\perp|^2|u_{n}|^2dx
-\f12\iint_{\R^3\times\R^3}\f{|u_{n}(x)|^2|u_{n}(y)|^2}{|x-y|}dxdy\Big)\nonumber\\
\geq& \int_{\R^3}|\n{u}|^2dx+\Omega\int_{\R^3}|x_\perp|^2|{u}|^2dx
-\f12\iint_{\R^3\times\R^3}\f{|{u}(x)|^2|{u}(y)|^2}{|x-y|}dxdy
= \E_{\Omega}({u})\geq E_{\Omega},\nonumber
\end{align}
which implies that ${u}$ is a minimizer of $E_{\Omega}$. This proves the
existence part of Theorem \ref{existence}.\qed

We next establish the qualitative properties of positive minimizers, which
will complete the proof of Theorem \ref{existence}.

\begin{Proposition}\label{lem2.3}
For any fixed $\Omega>0$, let $u_\Omega$ be a positive minimizer of $E_\Omega$, then up to a translation along the $x_3$-direction, the positive minimizer is radially decreasing with respect to $(x_1,x_2)$ and $x_3$.
%$u_\Omega(x_\bot, x_3)$ is radially symmetric and decreasing with respect to $|x_\bot|$ for every $x_3\in\R$.  Moreover, up to a translation in the $x_3$-direction, $u_\Omega(x_\bot, x_3)$ is even in $x_3$ and decreasing with respect to $|x_3|$.
\end{Proposition}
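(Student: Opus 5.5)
We work in two stages: first the transverse symmetry, then the longitudinal one.

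\emph{Transverse symmetry.} Let $u=u_\Omega>0$ be a minimizer. Let $u^{\ast_\perp}$ denote the Schwarz symmetrization of $u(\cdot,x_3)$ in $x_\perp$ for each fixed $x_3$. We will check that $u^{\ast_\perp}$ does at least as well as $u$ in every term of $\E_\Omega$:
\begin{itemize}
\item The $L^2$ norm is preserved.
\item The kinetic term does not increase, by the P\'olya--Szeg\H{o} inequality for partial (Steiner-type) symmetrization.
\item $\int|x_\perp|^2|u|^2$ does not increase, since $|x_\perp|^2$ is radially increasing (Hardy--Littlewood rearrangement inequality).
\item The Coulomb term does not decrease, by the Riesz rearrangement inequality for Steiner symmetrization in two variables. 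The kernel $|x-y|^{-1}=\big(|x_\perp-y_\perp|^2+(x_3-y_3)^2\big)^{-1/2}$ is symmetric decreasing in $x_\perp-y_\perp$ for each fixed $x_3-y_3$, so one applies the 2D Riesz inequality fibrewise and integrates in $(x_3,y_3)$.
\end{itemize}
Hence $u^{\ast_\perp}$ is also a minimizer and every inequality above is an equality. Equality in the Riesz inequality with the strictly symmetric-decreasing kernel (Lieb's strict version, \cite[Theorem 3.9]{Lieb1}) forces each slice $u(\cdot,x_3)$ to be a translate of its rearrangement, a priori with a centre $a(x_3)$ depending on $x_3$. Alternatively, equality in the potential term $\int|x_\perp|^2|u|^2$ with the strictly increasing weight forces $u(\cdot,x_3)=u^{\ast_\perp}(\cdot,x_3)$ directly, i.e.\ centred at $0$. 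This second route gives radial symmetry and monotone decrease in $|x_\perp|$ at once, without any unique continuation.

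\emph{Longitudinal symmetry.} Steiner symmetrization in $x_3$ cannot be used directly in the same way: equality would only give symmetry up to an $x_3$-translation that may depend on $x_\perp$, and the potential term gives no rigidity here. We proceed by reflection instead.

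1. \emph{Choice of the reflection plane.} Choose $t$ such that the half-spaces $\{x_3<t\}$ and $\{x_3>t\}$ each carry mass $1/2$, and translate so that $t=0$.

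2. \emph{Reflected competitors.} Let $u_-$ be the function equal to $u$ on $\{x_3<0\}$ and extended evenly to $\{x_3>0\}$; define $u_+$ analogously from $\{x_3>0\}$. Both have mass $1$, and the kinetic and trap terms split exactly:
\begin{equation*}
\mathcal K(u)=\tfrac12\big(\mathcal K(u_-)+\mathcal K(u_+)\big).
\end{equation*}

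3. \emph{Coulomb term.} We need
\begin{equation*}
D(\rho,\rho)\le \tfrac12\big(D(\rho_-,\rho_-)+D(\rho_+,\rho_+)\big),\qquad \rho=u^2,\ \rho_\pm=u_\pm^2 .
\end{equation*}
Write $\rho=\tfrac12(\rho_-+\rho_+)+\sigma$, where $\sigma$ is odd in $x_3$ and $\tfrac12(\rho_-+\rho_+)$ is even. Then
\begin{equation*}
\tfrac12\big(D(\rho_-,\rho_-)+D(\rho_+,\rho_+)\big)-D(\rho,\rho)=\tfrac14 D(\rho_--\rho_+,\rho_--\rho_+)-D(\sigma,\sigma)+\text{cross terms}.
\end{equation*}
The clean form of this is the Fourier identity from \cite{LM08}: the difference equals a positive-definite quadratic form in the reflected density difference, because the kernel $|x-y^\ast|^{-1}$ (with $y^\ast$ the reflection of $y$) is dominated by $|x-y|^{-1}$ on a half-space pairing. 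The identity shows the difference is $\ge0$, with equality only if the potential $V=|x|^{-1}\ast\rho$ satisfies $\partial_3V=0$ on $\{x_3=0\}$.

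4. \emph{Conclusion of minimality.} Consequently $\min\big(\E_\Omega(u_-),\E_\Omega(u_+)\big)\le\E_\Omega(u)$, so both $u_\pm$ are minimizers and the Coulomb inequality is an equality. In particular $u_+$ and $u$ coincide on the half-space $\{x_3>0\}$.

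5. \emph{Rigidity via unique continuation.} Both $u$ and $u_+$ satisfy the Euler--Lagrange system \eqref{ele} coupled with $-\Delta V=4\pi u^2$. Their Lagrange multipliers agree, since they are computed from the same energy identity on each half. The two solutions agree on an open set, so the unique continuation principle for the coupled elliptic system (applied to $(u-u_+,V-V_+)$, which satisfies a linear system with bounded coefficients) gives $u\equiv u_+$. Hence $u$ is even in $x_3$.

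6. \emph{Monotonicity in $|x_3|$.} Now apply Steiner symmetrization in $x_3$. The strict Riesz equality case gives $u=u^{\ast_3}$ up to a translation. Since $u$ is already even, that translation is $0$, and $u$ is decreasing in $|x_3|$.

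The main obstacle is step 3: the exact Coulomb reflection inequality together with its equality characterization (the Neumann condition $\partial_3V=0$ on the plane). Step 5 also needs care. One must verify that equal Lagrange multipliers and equality on a half-space really put us in the setting of unique continuation for the nonlocal system. The plan is to handle this by treating $V$ as a second unknown, so that the problem becomes a local elliptic system.
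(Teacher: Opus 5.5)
\textbf{Gap in step 5.} Your overall route is the paper's: partial Schwarz symmetrization in $x_\perp$, then reflection across the half-mass plane with the Fourier identity of \cite{LM08}, unique continuation for the Schr\"odinger--Poisson system, and Steiner symmetrization for monotonicity. Using the strictly increasing weight $|x_\perp|^2$ to force $u(\cdot,x_3)=u^{*_\perp}(\cdot,x_3)$ is a correct and clean way to get the transverse equality case.

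The gap is in step 5. Unique continuation for the linear system satisfied by $(u-u_+,\,V-V_+)$ needs \emph{both} components to vanish on an open set. You only know $u=u_+$ on $\{x_3>0\}$. The potentials $V=|x|^{-1}*u^2$ and $V_+=|x|^{-1}*u_+^2$ are nonlocal and depend on the values of $u,u_+$ on $\{x_3<0\}$, where these differ. So $V=V_+$ on $\{x_3>0\}$ is not automatic, and treating $V$ as a second unknown makes the system local without supplying this vanishing.

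Your argument for $\mu=\mu_+$ also fails. Write $K$ for the kinetic-plus-trap energy. Testing the equations gives $\mu=2E_\Omega-K(u)$ and $\mu_+=2E_\Omega-K(u_+)$, while $K(u)=\tfrac12(K(u_-)+K(u_+))$. So you would need $K(u_-)=K(u_+)$, which nothing establishes a priori. (In step 3, incidentally, the sign comes from positive definiteness of $|x-y^\ast|^{-1}$ on a half-space, which the cited identity encodes, not from pointwise domination; there are also no cross terms.)

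\textbf{How to close it.} The missing link is the Neumann condition you derived in step 3 and never used. The paper argues as follows. First, $\partial_3V=0$ on $\{x_3=0\}$ by the equality case, and $\partial_3V_+=0$ there because $u_+$ is even. Second, both $V$ and $V_+$ solve $-\Delta W=4\pi u^2$ in $\{x_3>0\}$ and vanish at infinity, so uniqueness for this Neumann problem gives $V=V_+$ on the half-space. Third, subtracting the two Euler--Lagrange equations there yields $(\mu-\mu_+)u=0$, hence $\mu=\mu_+$ since $u>0$. Unique continuation then applies.

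There is also a shortcut. Since $u_+$ is itself a minimizer, subtract the two Euler--Lagrange equations on $\{x_3>0\}$, where $u=u_+>0$. This gives $V_+-V\equiv\mu-\mu_+$ there, and letting $x_3\to+\infty$ forces both the constant and $V-V_+$ to vanish.

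Finally, your reason for not using Steiner symmetrization in $x_3$ from the start is too pessimistic for this kernel. For $x_\perp\neq y_\perp$, the function $t\mapsto(|x_\perp-y_\perp|^2+t^2)^{-1/2}$ is strictly symmetric decreasing. Lieb's strict Riesz inequality applied to the fibres at $x_\perp$ and $y_\perp$ therefore forces a \emph{common} translation for both, so the centre of each fibre is a.e.\ constant in $x_\perp$. The equality case you invoke in step 6 would thus give evenness about a single plane directly, making steps 1--5 unnecessary.
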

\noindent{\bf Proof.} Suppose $u_\Omega$ is a positive minimizer of $E_\Omega$, define the transverse symmetrization of $u_\Omega$ with respect to $x_\bot$:
$$\tilde{u}(x_\bot,x_3):=u^*_{x_3}(x_\bot),$$
where $u_{x_3}(x_\bot)=u_\Omega(x_\bot,x_3)$ and $*$ denotes the Schwartz rearrangement with respect to $x_\bot$. It yields from \cite[Lemma 8.2]{BS} that
$$\iint_{\R^3\times\R^3}\frac{|\tilde{u}(x)|^2|\tilde{u}(y)|^2}{|x-y|}dxdy\geq
\iint_{\R^3\times\R^3}\frac{| u_\Omega(x) |^2| u_\Omega(y) |^2}{|x-y|}dxdy.$$
Moreover, using the similar argument of \cite[Theorem 2]{BBJV}, one can check that $u_\Omega(x_\bot,x_3)$ is radially symmetric and decreasing with respect to $|x_\bot|$ for every $x_3\in\R$.

We next consider the symmetry of $u_\Om$ with respect to $x_3\in\R$. Since the functional $\E_\Om$ is translation invariant in the $x_3$-direction, we may translate $u_\Om$ so that
\begin{equation}\label{trans12}
\int_{\{x_3<0\}} |u_\Om(x)|^2dx=\int_{\{x_3>0\}} |u_\Om(x)|^2dx=\frac12.
\end{equation}
Define the reflected functions
\begin{equation*}
u_1(x_\perp,x_3)=
\begin{cases}
u_\Om(x_\perp,x_3),& x_3<0,\\
u_\Om(x_\perp,-x_3),& x_3\geq 0,
\end{cases}
\text{ \ and \ }
u_2(x_\perp,x_3)=
\begin{cases}
u_\Om(x_\perp,-x_3),& x_3<0,\\
u_\Om(x_\perp,x_3),& x_3\geq 0,
\end{cases}
\end{equation*}
so that $u_1,u_2\in\mathcal H$ and $\|u_1\|_{2}=\|u_2\|_{2}=1$. Moreover,
\begin{equation*}
\int_{\R^3}\big(|\nabla u_1|^2+|\nabla u_2|^2\big)dx=2\int_{\R^3}|\nabla u_\Omega|^2dx,
\text{ \ and \ }
\int_{\R^3} |x_\perp|^2\big(|u_1|^2+|u_2|^2\big)dx=2\int_{\R^3} |x_\perp|^2|u_\Omega|^2dx.
\end{equation*}
For the nonlocal term, set $\rho:=|u_\Om|^2$, $\rho_1:=|u_1|^2$ and $\rho_2:=|u_2|^2$. Similar to \cite[(2.38)]{LM08}, one can check that
\begin{equation*}
\begin{aligned}
&\iint_{\R^3\times \R^3} \frac{\rho_1(x)\rho_1(y)}{|x-y|} dxdy
+\iint_{\R^3\times \R^3} \frac{\rho_2(x)\rho_2(y)}{|x-y|} dxdy
-2\iint_{\R^3\times \R^3} \frac{\rho(x)\rho(y)}{|x-y|} dxdy\\
=&32\int_{\mathbb R^2}\frac{1}{|\xi'|}
\left|\int_0^\infty
\widehat{A\rho}(\xi',\xi_3)\frac{\xi_3}{|\xi'|^2+\xi_3^2} d\xi_3
\right|^2 d\xi'
\geq 0,
\end{aligned}
\end{equation*}
where $A\rho(x_\perp,x_3)=\frac12\big(\rho(x_\perp,x_3)-\rho(x_\perp,-x_3)\big)$ is the odd part of $\rho$ with respect to $x_3$, and $\xi'=(\xi_1,\xi_2)$. Consequently,
\begin{equation*}
\mathcal E_\Omega(u_1)+\mathcal E_\Omega(u_2)-2\mathcal E_\Omega(u_\Om)\leq 0.
\end{equation*}
Since $u_\Omega$ is a minimizer and $u_1,u_2$ are admissible, we have $\mathcal E_\Omega(u_1)\geq\mathcal E_\Omega(u_\Om)$ and $\mathcal E_\Omega(u_2)\geq\mathcal E_\Omega(u_\Om)$. Hence equality must hold everywhere. Therefore,  we have for $a.e.$  $\xi'\in\mathbb R^2$,
\begin{equation}\label{symmx3}
\int_0^\infty
\widehat{A\rho}(\xi',\xi_3)\frac{\xi_3}{|\xi'|^2+\xi_3^2} d\xi_3=0.
\end{equation}

Set $U:=I(\rho)=\int_{\R^3}\frac{\rho(y)}{|x-y|} dy$. Using \eqref{symmx3} and
$\widehat U(\xi)=\frac{4\pi}{|\xi|^2}\widehat\rho(\xi)$, we obtain that $\int_{\R}i\xi_3\widehat U(\xi',\xi_3) d\xi_3=0$ for almost every \(\xi'\in\R^2\), where the even part of $\rho$ gives no contribution by oddness in
$\xi_3$. Arguing as in the proof of
\cite[Theorem~4.6]{LM08}, we conclude that
$$
\int_{\R^2}\partial_{x_3}U(x_\perp,0)\psi(x_\perp) dx_\perp=0\quad\text{for every } \quad\psi\in C_c^\infty(\R^2).
$$
Since $\partial_{x_3}U$ is continuous, it follows that
\begin{equation}\label{Neumann}
\partial_{x_3}U(x_\perp,0)=0
\qquad\text{for all }\quad x_\perp\in\R^2.
\end{equation}

Set $ U_1:=I(|u_1|^2)$. On the half-space $\{x\in\R^3:~x_3<0\}$, we have $u_\Omega=u_1$, so both $U$ and $U_1$ satisfy the same Poisson equation
\begin{equation*}
-\Delta U=4\pi |u_\Omega|^2\text{ \ and \ } -\Delta U_1=4\pi |u_\Omega|^2\text{ \ in \ }\{x\in\R^3:~x_3<0\}.
\end{equation*}
Moreover, $U_1$ is symmetric with respect to $x_3$, hence $\partial_{x_3}U_1(x_\perp, 0)=0$. Together with \eqref{Neumann}, this gives the same Neumann condition on the boundary.
Note that $U\to 0$ and $U_1\to 0$ as $|x|\to +\infty$.
Hence the solution of this Neumann problem is unique, we have $U=U_1$ in $\{x\in\R^3:~x_3<0\}$.
Since  $u_\Omega$ and $u_1$ satisfy
\begin{equation*}
-\Delta u_\Om+\Omega|x_\perp|^2u_\Om-Uu_\Om=\mu_\Om u_\Om\text{ \ in \ }\R^3,
\end{equation*}
and
\begin{equation*}
-\Delta u_1+\Omega|x_\perp|^2u_1-U_1u_1=\mu_1 u_1\text{ \ in \ }\R^3,
\end{equation*}
for some Lagrange multipliers $\mu_\Om,\mu_1\in\mathbb R$. On the open set $\{x\in\R^3:x_3<0\}$, we have $u_\Om=u_1$ and $U=U_1$. Since it follows from \eqref{trans12} $u_\Om$ cannot vanish identically on $\{x\in\R^3:x_3<0\}$, we obtain that $\mu_\Om=\mu_1$. Consequently, the difference $u_\Om-u_1$ and $U-U_1$ satisfy the linear system
\begin{equation*}
\begin{cases}
-\Delta(u_\Omega-u_1)+\bigl(\Omega|x_\perp|^2-\mu_1-U\bigr)(u_\Omega-u_1)-u_1(U-U_1)=0\text{ \ in \ }\R^3,\\
-\Delta(U-U_1)-4\pi(u_\Omega+u_1)(u_\Omega-u_1)=0\text{ \ in \ }\R^3,
\end{cases}
\end{equation*}
with $u_{\Omega}-u_1\equiv U-U_1\equiv0$ on the half-space $\{x\in\R^3:x_3<0\}$. By the unique continuation principle (see \cite{LM08} and the references therein), we conclude that $u_\Om-u_1\equiv0$ everywhere. Hence $u_{\Omega}=u_1$, i.e.,
\begin{equation*}
u_{\Omega}(x_\perp,x_3)=u_{\Omega}(x_\perp,-x_3)\text{\ \  in \ }\mathbb R^3.
\end{equation*}
This proves the desired symmetry of $u_\Omega(x_\bot, x_3)$ in the third direction.
Furthermore, similar to \cite[Theorem 2]{BBJV}, one can check that $u_\Omega$ is decreasing with respect to $|x_3|$. The proof of Proposition \ref{lem2.3} is therefore complete. \qed
\vskip 0.05truein
\noindent{\bf Proof of Theorem \ref{existence}.}
Combining the existence result above with Proposition \ref{lem2.3}, we complete
the proof of Theorem \ref{existence}. \qed

\section{Energy Estimates of $E_\Omega$ as $\Omega\to+\infty$}\label{S3}
This section is devoted to establishing a sharp energy estimate for
$E_\Omega$ defined in \eqref{inf}. This estimate is a crucial ingredient in the
proof of Theorem \ref{thm1.2}, since it determines the correct anisotropic
scales of the minimizers in the large-$\Omega$ limit. More precisely, we shall prove the following Theorem in this section.

\begin{Theorem}\label{thm3.1}
The ground state energy $E_{\Omega}$ satisfies
\begin{equation*}
E_{\Omega}=2\sqrt{\Omega}-\f{1}{192}(\ln \Omega)^2+\f{1}{24}(\ln \Omega)(\ln \ln \Omega)+O(\ln \Omega)\text{ \ as \ }\Omega\to+\infty.
\end{equation*}
\end{Theorem}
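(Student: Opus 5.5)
The plan is to compare $E_\Omega$ with an effective one-dimensional problem obtained by freezing the transverse variables in the lowest Hermite mode $\psi_0$ from \eqref{psi0}. The upper and lower bounds are proved separately, each with error $O(\ln\Omega)$.

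For $u=\psi_0(x_\perp)\varphi(x_3)$ with $\|\varphi\|_{L^2(\R)}=1$, the computation in \eqref{2.7} gives
\begin{equation*}
\E_\Omega(\psi_0\varphi)=2\sqrt\Omega+\int_\R|\varphi'|^2dx_3-\frac12\iint_{\R\times\R}|\varphi(s)|^2|\varphi(t)|^2K_\Omega(s-t)\,ds\,dt,
\end{equation*}
where
\begin{equation*}
K_\Omega(z):=\iint_{\R^2\times\R^2}\frac{\psi_0^2(x_\perp)\psi_0^2(y_\perp)}{\sqrt{|x_\perp-y_\perp|^2+z^2}}\,dx_\perp dy_\perp .
\end{equation*}

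\textbf{Step 1: the reduced kernel.} Set $\ell:=\Omega^{-1/4}$. First I establish the elementary bounds
\begin{equation*}
K_\Omega(z)\le\min\{|z|^{-1},C\ell^{-1}\},\qquad K_\Omega(z)\ge |z|^{-1}-C\ell^2|z|^{-3}\ \text{ for }|z|\ge\ell .
\end{equation*}
It follows that for $R\gg\ell$,
\begin{equation*}
\int_{|z|<R}K_\Omega(z)\,dz=2\ln(R/\ell)+O(1).
\end{equation*}

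\textbf{Step 2: upper bound.} I take $\varphi_\lambda(x_3)=\sqrt\lambda f(\lambda x_3)$, where $f$ is the normalized minimizer of the 1D local problem
\begin{equation*}
e_0:=\inf_{\|f\|_2=1}\Big\{\int_\R|f'|^2-\frac14\int_\R f^4\Big\}.
\end{equation*}
This minimizer is the sech profile $\frac{1}{2\sqrt2}(\cosh\frac{x_3}{8})^{-1}$, and $e_0=-\frac1{192}\cdot 4^2$ up to the normalization to be fixed.

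Splitting the double integral at $|s-t|\sim\ell$ and at $|s-t|\sim\lambda^{-1}$, and using the smoothness and decay of $f$, gives
\begin{equation*}
\frac12\iint|\varphi_\lambda(s)|^2|\varphi_\lambda(t)|^2K_\Omega(s-t)\,ds\,dt=\lambda\Big(\ln\frac{\Omega^{1/4}}{\lambda}\Big)\int_\R f^4+O(\lambda).
\end{equation*}
The trial energy is therefore
\begin{equation*}
2\sqrt\Omega+\lambda^2a-\lambda b\Big(\frac14\ln\Omega-\ln\lambda\Big)+O(\lambda),\qquad a=\int_\R|f'|^2,\ b=\int_\R f^4 .
\end{equation*}
Choosing $\lambda=c\ln\Omega$ with $c$ the optimizer of $c^2a-\frac{cb}{4}$ produces the term $-\frac1{192}(\ln\Omega)^2$. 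The $+\lambda b\ln\lambda$ term produces $\frac1{24}(\ln\Omega)(\ln\ln\Omega)$, and everything else is $O(\ln\Omega)$.

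\textbf{Step 3: lower bound.} Let $u=u_\Omega$ be a positive minimizer. From the upper bound and \eqref{GN}, scaled appropriately, I first derive the a priori bounds
\begin{equation*}
\|\p_3u\|_2^2=O((\ln\Omega)^2),\qquad\int|\nabla_\perp u|^2+\Omega|x_\perp|^2u^2-2\sqrt\Omega=O((\ln\Omega)^2).
\end{equation*}
Then I decompose $u=P_1u+Q_1u$ with $P_1u=\psi_0\varphi$. The spectral gap $2\sqrt\Omega$ of the 2D Hermite operator yields
\begin{equation*}
\E_\Omega(u)\ge2\sqrt\Omega+2\sqrt\Omega\|Q_1u\|_2^2+\|\p_3u\|_2^2-\frac12D(u^2,u^2),
\end{equation*}
so that $\|Q_1u\|_2^2=O((\ln\Omega)^2\Omega^{-1/2})$.

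Expanding $u^2$, the Coulomb term splits as $D(P_1u\,P_1u)$, plus cross terms linear in $Q_1u$, plus terms quadratic in $Q_1u$. The cross and quadratic terms are estimated by HLS and Cauchy--Schwarz, with Sobolev norms measured in the anisotropic scales: transverse derivatives cost $\Omega^{1/4}$ and longitudinal ones cost $\ln\Omega$. The target is to bound them by
\begin{equation*}
\sqrt\Omega\|Q_1u\|_2^2+O(\ln\Omega),
\end{equation*}
so that the spectral gap term absorbs them.

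The main part is the 1D functional in $\varphi$ with kernel $K_\Omega$. To bound it from below I split $K_\Omega\le\ell^{-1}\mathds 1_{|z|<\ell}+|z|^{-1}\mathds 1_{\ell<|z|<R}+R^{-1}$ with $R=(\ln\Omega)^{-1}$. For the middle region I use
\begin{equation*}
|\varphi(s)|^2|\varphi(t)|^2\le\frac12\big(|\varphi(s)|^4+|\varphi(t)|^4\big),
\end{equation*}
and for the short-range piece I use $\|\varphi\|_\infty^2\le\|\varphi\|_2\|\varphi'\|_2$. This gives the Coulomb contribution $\le\ln(R/\ell)\int\varphi^4+O(\ln\Omega)$. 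Minimizing $\int|\varphi'|^2-\frac14\big(\ln\Omega-4\ln\ln\Omega\big)\int\varphi^4$ by scaling of $e_0$ reproduces the first three terms with an $O(\ln\Omega)$ error.

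\textbf{Main obstacle.} I expect the hardest step to be controlling the mixed Coulomb terms involving $Q_1u$ with only an $O(\ln\Omega)$ loss. A crude HLS bound costs $\Omega^{1/4}\|Q_1u\|_2$, and with the a priori bound on $\|Q_1u\|_2$ this is exactly borderline, of size $O(\ln\Omega)$. If that bound turns out too weak, I would first try to bootstrap the a priori estimate on $\|Q_1u\|_2$ using the Euler--Lagrange equation \eqref{ele}. A second, lesser difficulty is the middle-region estimate in Step 3: making the $\ln(R/\ell)$ constant sharp (coefficient exactly $2$ in $\int_{|z|<R}K_\Omega$) while keeping the error at $O(\lambda)$ when $\varphi$ is only known to be in $H^1$, which needs a careful choice of $R$ relative to the actual longitudinal scale of $\varphi$.
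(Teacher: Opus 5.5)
\textbf{There is a gap in Step 3.} The a priori bounds $\|\partial_3 u\|_2^2=O((\ln\Omega)^2)$ and $\|Q_1u\|_2^2=O((\ln\Omega)^2\Omega^{-1/2})$ do not follow from \eqref{GN}, scaled or not. That inequality is isotropic: $D(u^2,u^2)\le S\|\nabla u\|_2$, and $\|\nabla u\|_2\ge\|\nabla_\perp u\|_2\approx\Omega^{1/4}$. Under the anisotropic rescaling the Coulomb kernel changes shape, so no rescaling improves this. From $E_\Omega\le 2\sqrt\Omega$ and \eqref{GN} you only get $\|\nabla u\|_2\le C\Omega^{1/4}$, hence $\|\partial_3u\|_2^2\lesssim\Omega^{1/4}$ and $\|Q_1u\|_2^2\lesssim\Omega^{-1/4}$.

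Inserted into your scheme, the ``borderline'' mixed bound $\Omega^{1/4}\|Q_1u\|_2$ is then $O(\Omega^{1/8})$. So is your short-range piece $C\|\varphi\|_\infty^2\le C\|\varphi'\|_2$. Both ruin the $O(\ln\Omega)$ error. Logarithmic a priori bounds need a genuinely anisotropic Coulomb estimate. The paper obtains $\|\partial_3u_\Omega\|_2^2\le C(\ln\Omega)^2$ only in Section~\ref{S4}, from the form bound of Lemma~\ref{lemB} lifted to the two-body kernel via centre-of-mass/relative coordinates. So, as written, the mixed terms you flag as the main obstacle are not controlled, and the 1D step inherits the same problem.

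\textbf{How to close it.} Absorb the error terms instead of plugging in a priori bounds. Hardy's inequality gives $\sup_x\int f(y)^2|x-y|^{-1}dy\le 2\|f\|_2\|\nabla f\|_2$. Apply it to $f=u$ and $f=P_1u$, both of which have $\|\nabla f\|_2\le C\Omega^{1/4}$:
\[
\tfrac12\bigl|D(u^2,u^2)-D((P_1u)^2,(P_1u)^2)\bigr|=\tfrac12\bigl|D\bigl(2P_1u\,Q_1u+(Q_1u)^2,\,u^2+(P_1u)^2\bigr)\bigr|\le C\Omega^{1/4}\bigl(\|Q_1u\|_2+\|Q_1u\|_2^2\bigr).
\]
The gap term $2\sqrt\Omega\|Q_1u\|_2^2$ absorbs this with $O(1)$ loss by AM--GM, so no bootstrap is needed.

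In the 1D problem, set aside $\kappa\|\varphi'\|_2^2$ with $\kappa=1/\ln\Omega$. Then $\kappa\|\varphi'\|_2^2-C\|\varphi'\|_2\ge -C\ln\Omega$, while the factor $1-\kappa$ on the remaining kinetic term costs only $O(\kappa\beta^2)=O(\ln\Omega)$. This is the same device the paper uses in Lemma~\ref{lemP}.

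\textbf{Comparison with the paper.} The paper avoids cross terms entirely via \cite[Lemma~4.3]{FG}, $D(|\phi|^2)\le(1+\kappa)D(|P\phi|^2)+C\kappa^{-3}D(|Q\phi|^2)$, valid for every normalized $\phi$. It then imports the 1D reduction from \cite{FG}. With the repairs above, your route is correct and more self-contained. Your kernel bound $K_\Omega\le C\ell^{-1}\mathds{1}_{\{|z|<\ell\}}+|z|^{-1}\mathds{1}_{\{\ell<|z|<R\}}+R^{-1}$, combined with $\varphi^2(s)\varphi^2(t)\le\frac12(\varphi^4(s)+\varphi^4(t))$, yields the sharp coefficient $\ln(R/\ell)=\frac14\ln\Omega-\ln\ln\Omega$ with no error in the middle region. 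So your ``lesser difficulty'' does not arise: $R=1/\ln\Omega$ works uniformly, at cost $\frac1{2R}$.

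Your upper bound coincides with the paper's, but fix its constants. The normalized minimizer of $\int|f'|^2-\frac14\int f^4$ is $\frac14(\cosh\frac{x_3}{8})^{-1}$, so $a=\frac1{192}$, $b=\frac1{24}$, $e_0=-\frac1{192}$, and the optimal choice is $\lambda=\ln\Omega$.
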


We first consider the lower bound estimate of $E_{{\Omega}}$ for some ${\Omega}>1$.
Motivated by \cite{FG}, define the projection operator $P_{{\Omega}}$ as follows
\begin{equation}\label{P}
(P_{{\Omega}}u)(x_\perp, x_3):=\psi_{{\Omega}}(x_\perp)\int_{\R^2}\psi_{{\Omega}}(y_\perp)u(y_\perp, x_3)dy_\perp \ \ \text{for any}\ \ u\in\H,
\end{equation}
where
\begin{equation}\label{ot}
\psi_{{\Omega}}(x_{\perp})=\frac{{\Omega}^\f14}{\sqrt{\pi}}
e^{-\frac{\sqrt{{\Omega}}|x_{\perp}|^2}{2}}.
\end{equation}
We also introduce the operator
\begin{equation}\label{A}
A_{{\Omega}}:=-\Delta_{\perp}+{\Omega}|x_{\perp}|^2\ \ \text{for}\ \ x_{\perp}\in\R^2.
\end{equation}
Moreover, we use the notation
$$\langle u, A_{{\Omega}} v\rangle:=\int_{\R^3}u A_{{\Omega}} v dx
=\int_{\R^3} \n_\perp u \cdot \n_\perp v dx+{\Omega}\int_{\R^3}|x_\perp|^2 u v dx,~u, v \in \H,$$
and
$\|\sqrt{A_{{\Omega}}} u\|^2_2:=\langle u, A_{{\Omega}} u\rangle$.
Note that $P_{{\Omega}}$ is an orthogonal projection,  and $P_{{\Omega}}$ commutes with $A_{{\Omega}}$ and $\partial_{x_3}$. Thus, one can check that for any $\phi\in \H$,
\begin{equation}\label{3.1}
\|\sqrt{A_{{\Omega}}} \phi\|^2_2=\|\sqrt{A_{{\Omega}}} P_{{\Omega}}\phi\|^2_2+\|\sqrt{A_{{\Omega}}} Q_{{\Omega}}\phi\|^2_2,
\end{equation}
and
\begin{equation}\label{3.2}
\|\partial_{x_3} \phi\|^2_2=\|\partial_{x_3} P_{{\Omega}}\phi\|^2_2+\|\partial_{x_3} Q_{{\Omega}}\phi\|^2_2,
\end{equation}
where
\begin{equation}\label{QO}
Q_{{\Omega}}=I-P_{{\Omega}}.
\end{equation}
Moreover, since $A_{{\Omega}} P_{{\Omega}}=2\sqrt{{\Omega}} P_{{\Omega}}$, we have
\begin{equation}\label{3.3}
\|\sqrt{A_{{\Omega}}} P_{{\Omega}}\phi\|_2^2=2\sqrt{{\Omega}}\|P_{{\Omega}}\phi\|_2^2.
\end{equation}
Additionally, the structure of the spectrum of $A_{{\Omega}}$ (cf. \cite[Section 8.3]{Teschl}) implies that
\begin{equation}\label{3.12}
\|\sqrt{A_{{\Omega}}} Q_{{\Omega}}\phi\|_2^2\geq4\sqrt{{\Omega}}\|Q_{{\Omega}}\phi\|_2^2.
\end{equation}

For any $\phi\in\H$ with $\|\phi\|_{2}=1$, it follows that
$\|P_{{\Omega}} \phi\|^2_2=1-\|Q_{{\Omega}}\phi\|^2_2$.
Using \cite[Lemma 4.3]{FG},
we then deduce from (\ref{functional}), (\ref{GN}), (\ref{3.1}), (\ref{3.2}) and  (\ref{3.3}) that for $0<\kappa\leq1$,
%\begin{equation}
\begin{align}
\E_{{\Omega}}(\phi)
=&\big\|\sqrt{A_{{\Omega}}} \phi\big\|^2_2+\|\partial_{x_3} \phi\|^2_2-\f12\iint_{\R^3\times\R^3}\f{|\phi(x)|^2|\phi(y)|^2}{|x-y|}dxdy\nonumber\\
\geq&\big\|\sqrt{A_{{\Omega}}} P_{{\Omega}}\phi\big\|^2_2+\big\|\sqrt{A_{{\Omega}}} Q_{{\Omega}}\phi\big\|^2_2
-\frac{1+\kappa}{2}\iint_{\R^3\times\R^3}\frac{|P_{{\Omega}} \phi(x)|^2|P_{{\Omega}} \phi(y)|^2}{|x-y|}dxdy\nonumber\\
&+\|\partial_{x_3} P_{{\Omega}}\phi\|^2_2+\|\partial_{x_3} Q_{{\Omega}}\phi\|^2_2
-\frac{C}{2}\kappa^{-3}\iint_{\R^3\times\R^3}\frac{|Q_{{\Omega}}\phi(x)|^2|Q_{{\Omega}}\phi(y)|^2}{|x-y|}dxdy\label{3.11}\\
\geq&2\sqrt{{\Omega}}\big(1-\|Q_{{\Omega}}\phi(x)\|^2_2\big)+\|\partial_{x_3} P_{{\Omega}}\phi\|^2_2-\frac{1+\kappa}{2}\iint_{\R^3\times\R^3}\frac{|P_{{\Omega}} \phi(x)|^2|P_{{\Omega}} \phi(y)|^2}{|x-y|}dxdy\nonumber\\
&+\big\|\sqrt{A_{{\Omega}}} Q_{{\Omega}}\phi\big\|^2_2+\|\partial_{x_3} Q_{{\Omega}}\phi\|^2_2
-\frac{C}{2}\kappa^{-3}\|Q_{{\Omega}}\phi\|_2^3\|\nabla Q_{{\Omega}}\phi\|_2,\nonumber
\end{align}
%\end{equation}
where $A_{{\Omega}}$ is as in (\ref{A}).
Let us first estimate the terms in (\ref{3.11}) that involve $P_{{\Omega}} \phi$ in the following lemma.

\begin{Lemma}\label{lemP}
For any $\phi\in\H$ with $\|\phi\|_{2}=1$,
 we have
\begin{align}
&\|\partial_{x_3} P_{{\Omega}} \phi\|_{2}^2- \f{1+\kappa_\Omega}{2 } \iint_{\R^3\times\R^3}\frac{|P_{{\Omega}} \phi(x)|^2|P_{{\Omega}} \phi(y)|^2}{|x-y|}dxdy \nonumber\\
\geq& -\f{1}{48}\big(\ln B_{{\Omega}}\big)^2+\f{1}{12}(\ln B_{{\Omega}})(\ln \ln B_{{\Omega}})-C\ln B_{{\Omega}}\ \ \text{as}\ \ {\Omega}\to+\infty,\nonumber
\end{align}
where $P_\Omega$ is as in \eqref{P}, and $\kappa_\Omega=\frac{1}{\ln B_{{\Omega}}}$ with   $B_{{\Omega}}:=2\sqrt{{\Omega}}$.
\end{Lemma}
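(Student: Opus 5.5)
The plan is to reduce the left-hand side to a one-dimensional problem with an explicit effective kernel, and then use the sharp constant of the 1D cubic NLS. Write $P_\Omega\phi=\psi_\Omega(x_\perp)f(x_3)$ with $f(x_3)=\int_{\R^2}\psi_\Omega\phi\,dy_\perp$, so that $\|f\|_{L^2(\R)}^2=\|P_\Omega\phi\|_2^2\le1$ and $\|\partial_{x_3}P_\Omega\phi\|_2=\|f'\|_2$. Setting $\rho=|f|^2$, the Coulomb term becomes
\begin{equation*}
\iint_{\R^3\times\R^3}\frac{|P_\Omega\phi(x)|^2|P_\Omega\phi(y)|^2}{|x-y|}dxdy=\iint_{\R\times\R}\rho(s)\rho(t)V_\Omega(s-t)\,ds\,dt,
\end{equation*}
with
\begin{equation*}
V_\Omega(z)=\iint_{\R^2\times\R^2}\frac{\psi_\Omega^2(x_\perp)\psi_\Omega^2(y_\perp)}{\sqrt{|x_\perp-y_\perp|^2+z^2}}\,dx_\perp dy_\perp .
\end{equation*}
By the scaling \eqref{ot}, $V_\Omega(z)=\Omega^{1/4}V_1(\Omega^{1/4}z)$, where $V_1$ is bounded, smooth and satisfies $V_1(z)=|z|^{-1}+O(|z|^{-3})$. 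I will compute its one-dimensional Fourier transform explicitly. The relative transverse variable is Gaussian, so $\widehat{V_1}(k)$ is an exponential-integral type function, which gives
\begin{equation*}
\widehat{V_\Omega}(k)\le \ln B_\Omega-2\ln(1+|k|)+C,\qquad \widehat{V_\Omega}(k)\ge 0 .
\end{equation*}
Here the normalisation $B_\Omega=2\sqrt\Omega$ has been absorbed into $C$. In particular, at low frequencies $V_\Omega$ acts like $(\ln B_\Omega)\,\delta$.

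Next I use $\iint\rho\rho V_\Omega=\frac1{2\pi}\int|\hat\rho|^2\widehat{V_\Omega}$ and split the frequencies at a threshold $K$. For $|k|>K$ I bound $\widehat{V_\Omega}\le \ln B_\Omega-2\ln K+C$. For $|k|\le K$ I keep the bound $\ln B_\Omega+C$, so the only loss comes from $\int_{|k|\le K}|\hat\rho|^2\,2\ln K\le 4K\ln K\,\|\rho\|_1^2$. This gives
\begin{equation*}
\iint\rho\rho V_\Omega\le L_K\int_\R\rho^2\,ds+\frac{2}{\pi}K\ln K,\qquad L_K:=\ln B_\Omega-2\ln K+C .
\end{equation*}
Then I apply the sharp 1D Gagliardo--Nirenberg bound: for $\|f\|_2\le1$,
\begin{equation*}
\|f'\|_2^2-\frac{a}{2}\int_\R|f|^4\ge-\frac{a^2}{48}.
\end{equation*}
Taking $a=(1+\kappa_\Omega)L_K$ gives the lower bound $-(1+\kappa_\Omega)^2L_K^2/48-(1+\kappa_\Omega)\pi^{-1}K\ln K$. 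Choosing $K$ of order $\ln B_\Omega$, up to lower-order factors, gives $L_K=\ln B_\Omega-2\ln\ln B_\Omega+O(\cdot)$. Expanding $L_K^2/48$ then produces exactly $-\frac1{48}(\ln B_\Omega)^2+\frac1{12}(\ln B_\Omega)(\ln\ln B_\Omega)$. The factor $(1+\kappa_\Omega)^2$ with $\kappa_\Omega=1/\ln B_\Omega$ only costs $\frac{2}{48}\ln B_\Omega+o(1)$, which is $O(\ln B_\Omega)$.

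The main obstacle is to make the error genuinely $O(\ln B_\Omega)$. The crude two-region split above loses either $K\ln K$ at low frequency, or a $\ln\ln\ln B_\Omega$ factor when $K=\ln B_\Omega/\ln\ln B_\Omega$. To remove this I would first try a weighted refinement. I would keep the full multiplier $-2\ln(1+|k|)$ and treat $-\frac{1+\kappa}{2\pi}\int|\hat\rho|^2(\ln B_\Omega-2\ln(1+|k|))$ together with $\|f'\|_2^2$. The key input is an a priori estimate: any near-minimiser has $\|f'\|_2\sim\ln B_\Omega$ and $\|\rho\|_2^2\lesssim\|f'\|_2$. By scaling, $\rho$ is then concentrated at frequencies of order $\ln B_\Omega$. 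So I would rescale $s=\sigma/\ln B_\Omega$, which makes $\ln(1+|k|)=\ln\ln B_\Omega+\ln|\tilde k|+O(\cdot)$. The remaining term $\int|\hat{\tilde\rho}|^2\ln|\tilde k|$ is then bounded by $C\|\tilde f'\|_2^2$-type quantities that are $O(1)$ after rescaling, hence $O(\ln B_\Omega)$ in the original variables. This is essentially the argument of \cite[Lemma 4.3]{FG}, and I expect the care to lie in controlling the logarithmic multiplier uniformly via the one-dimensional kinetic term.
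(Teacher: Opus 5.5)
Your overall strategy (exact one-dimensional reduction, Fourier multiplier, sharp 1D Gagliardo--Nirenberg) is viable, but there is a genuine gap, and it sits exactly at the decisive step. Beyond leading order, the whole content of the lemma is that the remainder is $O(\ln B_\Omega)$. The two-region split you actually carry out gives only $O(\ln B_\Omega\ln\ln\ln B_\Omega)$.

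The repair you sketch is not an argument. The lemma is a lower bound for every $\phi$; restricting to near-minimisers needs a case argument you do not supply, and it is unnecessary anyway. ``$\rho$ is concentrated at frequencies of order $\ln B_\Omega$'' is not a quantitative statement that can be inserted into the multiplier estimate. Finally, no kinetic control of the logarithmic term is needed. In the direction you need (an upper bound on the Coulomb term), $-\int|\widehat{\tilde\rho}|^2\ln|\tilde k|\,d\tilde k$ is nonpositive on $|\tilde k|\geq1$ and at most $2\|\tilde\rho\|_{L^1(\R)}^2$ on $|\tilde k|<1$.

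Your kernel bound is also false at both ends. Since $V_\Omega(z)\sim|z|^{-1}$ is not integrable, $\widehat{V_\Omega}(k)\to+\infty$ like $-2\ln|k|$ as $k\to0$. For $|k|\gg\Omega^{1/4}$, the right-hand side $\ln B_\Omega-2\ln(1+|k|)+C$ is negative, whereas $\widehat{V_\Omega}>0$.

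The missing observation is simple. On $|k|\le K$ you replaced the excess weight $\widehat{V_\Omega}(k)-L_K\approx 2\ln(K/|k|)$ by the constant $2\ln K$, and this is the only source of the extra $\ln K$.

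To make this precise, use $\widehat{|x|^{-1}}=4\pi|\xi|^{-2}$ and $\widehat{\psi_\Omega^2}(\xi_\perp)=e^{-|\xi_\perp|^2/(2B_\Omega)}$. This gives $\widehat{V_\Omega}(k)=\int_0^\infty\frac{e^{-u/B_\Omega}}{u+k^2}\,du=e^{k^2/B_\Omega}E_1(k^2/B_\Omega)$, where $E_1(z)=\int_z^\infty t^{-1}e^{-t}\,dt$. The classical bound $e^zE_1(z)<\ln(1+z^{-1})$ then yields $0<\widehat{V_\Omega}(k)<\ln(1+B_\Omega/k^2)$, which is decreasing in $|k|$. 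For $0<|k|\le K$ one has $\ln(1+B_\Omega/k^2)\le\ln(1+B_\Omega/K^2)+2\ln(K/|k|)$, and $\int_{|k|\le K}2\ln(K/|k|)\,dk=4K$. With $|\widehat\rho|\le\|\rho\|_{L^1}\le1$ this gives
\[ \iint_{\R\times\R}\rho(s)\rho(t)V_\Omega(s-t)\,ds\,dt\le\ln\Big(1+\frac{B_\Omega}{K^2}\Big)\int_\R\rho^2\,ds+\frac{2K}{\pi}. \]
Take $K=\ln B_\Omega$, so that $\ln(1+B_\Omega K^{-2})=\ln B_\Omega-2\ln\ln B_\Omega+o(1)$. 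Your Gagliardo--Nirenberg step with $a=(1+\kappa_\Omega)\ln(1+B_\Omega K^{-2})$ then gives the lemma for every $\phi$, with no a priori estimate.

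Once repaired, this is a genuinely different and somewhat cleaner route than the paper's. The paper imports the real-space decomposition of \cite[Proposition 2.1]{FG} together with \cite[Corollary 3.2]{FG} and the analogue of \cite[Lemma 4.4]{FG}. Cutting $|x_3-y_3|$ at $1/\ln B_\Omega$ produces $C_{B_\Omega}=\frac{\ln B_\Omega}{2}-\ln\ln B_\Omega$ directly. However, the remainders $R^{(1)}_{B_\Omega}$ and $R^{(2)}_{B_\Omega}$ involve $\|\partial_{x_3}P_\Omega\phi\|_2^{3/2}/\sqrt{\ln B_\Omega}$ and $\|\partial_{x_3}P_\Omega\phi\|_2$, which is why the paper reserves $\kappa_\Omega\|\partial_{x_3}P_\Omega\phi\|_2^2$ to absorb them. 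With the exact multiplier, all errors depend only on $\|\rho\|_{L^1}$. Note also that \cite[Lemma 4.3]{FG} is the $P_\Omega/Q_\Omega$ splitting used in \eqref{3.11}, not this one-dimensional reduction.
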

\noindent{\bf Proof.}
By the similar argument of \cite[Lemma 4.4]{FG}, one can check that there is a constant $C>0$, independent of ${\Omega}$, such that for any $\phi\in\H$,
\begin{equation}\label{v}
R_{B_{{\Omega}}}^{(2)}(P_{{\Omega}} \phi)\leq C \|P_{{\Omega}} \phi\|_2^3\|\partial_{x_3} P_{{\Omega}} \phi\|_2,
\end{equation}
where \begin{equation}\label{3.10}
R_B^{(2)}(\phi):=\int_{\R}\iint_{\R^2\times\R^2}|\phi(x_{\perp}, x_3)|^2|\phi(y_{\perp}, x_3)|^2
\bar{K}_B(x_{\perp}-y_{\perp})dx_{\perp}dy_{\perp}dx_3,
\end{equation}
and $\bar{K}_B$ is given by
$$\bar{K}_B(x_{\perp})=\ln \big(1+\sqrt{1+(\ln B)^2|x_{\perp}|^2}\big)-\ln \big(\sqrt{B}|x_{\perp}|\big).$$
Note that $\|P_{{\Omega}} \phi\|_{2}\leq\|\phi\|_{2}=1$. Using \cite[Proposition 2.1]{FG} and \cite[Corollary 3.2]{FG}, one can calculate from \eqref{v} that for large ${\Omega}>0$,
\begin{align}
&\|\partial_{x_3} P_{{\Omega}} \phi\|_{2}^2- \f{1+\kappa_\Omega}{2 } \iint_{\R^3\times\R^3}\frac{|P_{{\Omega}} \phi(x)|^2|P_{{\Omega}} \phi(y)|^2}{|x-y|}dxdy\nonumber\\
\geq&(1-\kappa_\Omega)\|\partial_{x_3} P_{{\Omega}} \phi\|_{2}^2
-(1+\kappa_\Omega)C_{B_{{\Omega}}}\int_{\R}\Big(\int_{\R^2}|P_{{\Omega}} \phi(x_{\perp}, x_3)|^2dx_{\perp}\Big)^2dx_3\nonumber\\
&+\kappa_\Omega\|\partial_{x_3} P_{{\Omega}} \phi\|_{2}^2-C(1+\kappa_\Omega)\Big(\ln B_{{\Omega}}+\frac{1}{\sqrt{\ln B_{{\Omega}}}}\|\partial_{x_3} P_{{\Omega}} \phi\|_{2}^{\f32}+\|\partial_{x_3} P_{{\Omega}} \phi\|_2\Big)\label{313}\\
\geq&-\frac{(1+\kappa_\Omega)^2C_{B_{{\Omega}}}^2}{12(1-\kappa_\Omega)}\|P_{{\Omega}} \phi\|_2^6+\Big[\f{\kappa_\Omega}{2} \|\partial_{x_3} P_{{\Omega}} \phi\|_{2}^2-C\frac{1+\kappa_\Omega}{\sqrt{\ln B_{{\Omega}}}}\|\partial_{x_3} P_{{\Omega}} \phi\|_{2}^{\f32}\Big]\nonumber\\
&+\Big[\f{\kappa_\Omega}{2} \|\partial_{x_3} P_{{\Omega}} \phi\|_{2}^2-C(1+\kappa_\Omega) \|\partial_{x_3} P_{{\Omega}} \phi\|_{2} \Big]-C (1+\kappa_\Omega) \ln B_{{\Omega}},\nonumber
\end{align}
where $C_{B_{{\Omega}}}:=\frac{\ln B_{{\Omega}}}{2}-\ln\ln B_{{\Omega}}$.
In the last inequality of (\ref{313}), we  minimize the contents of the two brackets with respect to $\partial_{x_3} P_{{\Omega}} \phi$, respectively. Then we obtain that
\begin{align}
&\|\partial_{x_3} P_{{\Omega}} \phi\|_{2}^2- \f{1+\kappa_\Omega}{2 } \iint_{\R^3\times\R^3}\frac{|P_{{\Omega}} \phi(x)|^2|P_{{\Omega}} \phi(y)|^2}{|x-y|}dxdy\label{3.13}\\
\geq&\Big(-\frac{1}{12}-\frac{C}{\ln B_{{\Omega}}}\Big)C_{B_{{\Omega}}}^2 \|P_{{\Omega}} \phi\|_2^6-C\Big(1+\frac{1}{\ln B_{{\Omega}}}\Big)^4\ln B_{{\Omega}}-C\Big(1+\frac{1}{\ln B_{{\Omega}}}\Big)^2\ln B_{{\Omega}}-C\ln B_{{\Omega}}\nonumber\\
\geq&-\f{1}{48}\big(\ln B_{{\Omega}}\big)^2+\f{1}{12}(\ln B_{{\Omega}})(\ln \ln B_{{\Omega}})-C\ln B_{{\Omega}}\ \ \text{as}\ \ {\Omega}\to+\infty.\nonumber
\end{align}
The proof of Lemma \ref{lemP} is therefore complete.\qed

\vskip 0.05truein
We then  establish the following lower bound estimate of $E_{\Omega}$ as $\Omega\to +\infty$.

\begin{Proposition}\label{pro3.3}
The ground state energy $E_{\Omega}$ satisfies
\begin{equation*}
E_{\Omega}\geq2\sqrt{\Omega}-\f{1}{192}(\ln \Omega)^2+\f{1}{24}(\ln \Omega)(\ln \ln \Omega)-C\ln \Omega\ \  \text{as}\  \  \Omega\to+\infty,
\end{equation*}
where the constant $C>0$ is independent of $\Omega$.
\end{Proposition}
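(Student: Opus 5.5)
The plan is to start from the decomposition \eqref{3.11} with the choice $\kappa=\kappa_\Omega=1/\ln B_\Omega$, $B_\Omega=2\sqrt\Omega$, and treat the $P_\Omega$-part and the $Q_\Omega$-part separately. For an arbitrary $\phi\in\H$ with $\|\phi\|_2=1$, Lemma \ref{lemP} bounds the $P_\Omega$-terms from below by
\[
-\tfrac{1}{48}(\ln B_\Omega)^2+\tfrac{1}{12}(\ln B_\Omega)(\ln\ln B_\Omega)-C\ln B_\Omega .
\]
Since $\ln B_\Omega=\tfrac12\ln\Omega+\ln 2$, we have $(\ln B_\Omega)^2=\tfrac14(\ln\Omega)^2+O(\ln\Omega)$ and $(\ln B_\Omega)(\ln\ln B_\Omega)=\tfrac12(\ln\Omega)(\ln\ln\Omega)+O(\ln\Omega)$. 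Substituting these gives exactly $-\tfrac{1}{192}(\ln\Omega)^2+\tfrac{1}{24}(\ln\Omega)(\ln\ln\Omega)-C\ln\Omega$. It therefore remains to show that the $Q_\Omega$-terms, together with the term $-2\sqrt\Omega\|Q_\Omega\phi\|_2^2$, give something bounded below by $-C$ (or at least by $-C\ln\Omega$).

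Set $q:=\|Q_\Omega\phi\|_2\le1$. By \eqref{3.12}, half of $\|\sqrt{A_\Omega}Q_\Omega\phi\|_2^2$ is at least $2\sqrt\Omega\, q^2$, which cancels the loss $-2\sqrt\Omega\, q^2$. We keep the other half together with $\|\partial_{x_3}Q_\Omega\phi\|_2^2$. Because $\|\nabla Q_\Omega\phi\|_2^2\le\|\sqrt{A_\Omega}Q_\Omega\phi\|_2^2+\|\partial_{x_3}Q_\Omega\phi\|_2^2$, what remains is at least
\[
\tfrac12 T-\tfrac{C}{2}\kappa_\Omega^{-3}q^3T^{1/2},\qquad T:=\|\sqrt{A_\Omega}Q_\Omega\phi\|_2^2+\|\partial_{x_3}Q_\Omega\phi\|_2^2 .
\]
The hard part is the bad factor $\kappa_\Omega^{-3}=(\ln B_\Omega)^3$. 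Minimizing naively in $T$ gives $-C(\ln\Omega)^6q^6$, which is too large unless $q$ is small. To deal with this we also use $T\ge 4\sqrt\Omega\,q^2$. We write $\tfrac12T=\tfrac14T+\tfrac14T$, use one quarter against the Coulomb term via Young's inequality, and get a lower bound $-C(\ln\Omega)^6q^6$. The other quarter gives $\ge\sqrt\Omega\,q^2$. Since $q\le1$, we have $\sqrt\Omega\,q^2-C(\ln\Omega)^6q^6\ge q^2(\sqrt\Omega-C(\ln\Omega)^6)\ge0$ for large $\Omega$. So the whole $Q_\Omega$-contribution is nonnegative once $\Omega$ is large.

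Adding the two parts gives $\E_\Omega(\phi)\ge2\sqrt\Omega-\tfrac1{192}(\ln\Omega)^2+\tfrac1{24}(\ln\Omega)(\ln\ln\Omega)-C\ln\Omega$ uniformly in $\phi$, and taking the infimum finishes the proof. One point to check is that \eqref{3.11} uses the $2\sqrt\Omega(1-q^2)$ term while Lemma \ref{lemP} has no $\|P_\Omega\phi\|_2$ prefactor beyond $\le1$; the lemma already accounts for this through $\|P_\Omega\phi\|_2^6\le1$, so the bookkeeping closes. The main obstacle is the polylogarithmic factor $\kappa_\Omega^{-3}$ in the $Q$-Coulomb term. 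It is resolved by the $\sqrt\Omega$ spectral gap dominating every power of $\ln\Omega$.
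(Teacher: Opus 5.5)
Your proposal is correct and follows the paper's proof. Both start from \eqref{3.11} with $\kappa_\Omega=1/\ln B_\Omega$, use Lemma \ref{lemP} for the $P_\Omega$-terms, convert $\ln B_\Omega$ into $\ln\Omega$, and use the spectral gap \eqref{3.12} to show the $Q_\Omega$-terms are nonnegative because $\sqrt{\Omega}$ dominates every power of $\ln\Omega$. Your handling of the $Q_\Omega$-terms is slightly more direct: you bundle everything into $T$ and split $\tfrac12T$ into two quarters, while the paper treats $\|\sqrt{A_\Omega}Q_\Omega\phi\|_2$ and $\|\partial_{x_3}Q_\Omega\phi\|_2$ separately with an auxiliary parameter $\rho=\|Q_\Omega\phi\|_2^2/(\kappa_\Omega^{3}\Omega^{1/4})$. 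Both arrive at the same kind of bound $\|Q_\Omega\phi\|_2^2\bigl[c\sqrt{\Omega}-C(\ln\Omega)^{6}\bigr]\geq0$ (the paper's also carries a $C\,\Omega^{1/4}(\ln\Omega)^{3}$ loss).
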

\noindent{\bf Proof.}
In view of \eqref{3.11} and Lemma \ref{lemP},
 we next estimate the terms that involve $Q_{{\Omega}}\phi$ in (\ref{3.11}). Note that
\begin{equation}\label{3.215}
 \|\nabla Q_{{\Omega}}\phi\|_2\leq \|\sqrt{A_{{\Omega}}} Q_{{\Omega}}\phi\|_2+\|\partial_{x_3} Q_{{\Omega}}\phi\|_2.
%\Big(\int_{\R^3}|\sqrt{A_{{\Omega}}} Q_{{\Omega}}\phi|^2dx\Big)^{\f12}+\Big(\int_{\R^3}|\partial_{x_3} Q_{{\Omega}}\phi|^2dx\Big).
\end{equation}
We then introduce another small parameter $0<\rho\leq 1$, and deduce from (\ref{3.12}) and (\ref{3.215}) that for sufficiently large ${\Omega}>0$,
\begin{align}
&\big\|\sqrt{A_{{\Omega}}} Q_{{\Omega}}\phi\big\|^2_2-2\sqrt{{\Omega}}\|Q_{{\Omega}}\phi\|^2_2
-\frac{C}{2}\kappa_\Omega^{-3}\|Q_{{\Omega}}\phi\|_2^3\|\nabla Q_{{\Omega}}\phi\|_2 \nonumber\\
\geq&\big\|\sqrt{A_{{\Omega}}} Q_{{\Omega}}\phi\big\|^2_2-2\sqrt{{\Omega}}\|Q_{{\Omega}}\phi\|^2_2-C \kappa_\Omega^{-3}\|Q_{{\Omega}} \phi\|_2^3\big\|\sqrt{A_{{\Omega}}} Q_{{\Omega}}\phi\big\|_2-C \kappa_\Omega^{-3}\|Q_{{\Omega}}\phi\|_2^3\|\partial_{x_3} Q_{{\Omega}} \phi\|_2\nonumber\\
=&(1-\rho)\big\|\sqrt{A_{{\Omega}}} Q_{{\Omega}}\phi\big\|^2_2-2\sqrt{{\Omega}}\|Q_{{\Omega}}\phi\|^2_2-C \kappa_\Omega^{-3}\|Q_{{\Omega}}\phi\|_2^3\|\partial_{x_3} Q_{{\Omega}}\phi\|_2\nonumber\\
&+\Big(\rho\big\|\sqrt{A_{{\Omega}}} Q_{{\Omega}}\phi\big\|^2_2-C \kappa_\Omega^{-3}\|Q_{{\Omega}} \phi\|_2^3\big\|\sqrt{A_{{\Omega}}} Q_{{\Omega}}\phi\big\|_2\Big)\nonumber\\
\geq&2\sqrt{{\Omega}}(1-2\rho)\|Q_{{\Omega}}\phi\|^2_2-C \kappa_\Omega^{-3}\|Q_{{\Omega}}\phi\|_2^3\|\partial_{x_3} Q_{{\Omega}}\phi\|_2\\
&+\Big(\rho\big\|\sqrt{A_{{\Omega}}} Q_{{\Omega}}\phi\big\|^2_2-C \kappa_\Omega^{-3}\|Q_{{\Omega}} \phi\|_2^3\big\|\sqrt{A_{{\Omega}}} Q_{{\Omega}}\phi\big\|_2\Big)\nonumber\\
\geq&2\sqrt{{\Omega}}(1-2\rho)\|Q_{{\Omega}}\phi\|^2_2-C \kappa_\Omega^{-6}\|Q_{{\Omega}}\phi\|^6_2\rho^{-1}-C \kappa_\Omega^{-3}\|Q_{{\Omega}} \phi\|_2^3\|\partial_{x_3} Q_{{\Omega}}\phi\|_2\nonumber\\
\geq&2\sqrt{{\Omega}}\|Q_{{\Omega}}\phi\|^2_2-C {\Omega}^{\f14}\kappa_\Omega^{-3}\|Q_{{\Omega}}\phi\|^4_2-C \kappa_\Omega^{-3}\|Q_{{\Omega}} \phi\|_2^3\|\partial_{x_3} Q_{{\Omega}}\phi\|_2.\nonumber
\end{align}
Here the last two inequalities follow from minimizing with respect to $\|\sqrt{A_{{\Omega}}} Q_{{\Omega}} \phi\|_2$ and choosing $\rho=\frac{\|Q_{{\Omega}}\phi\|^2_2}{\kappa_\Omega^{3}{\Omega}^{\f14}}$, respectively.
To estimate the remaining terms of (\ref{3.11}), we note that
$$\|\partial_{x_3} Q_{{\Omega}}\phi\|_2^2-C \kappa_\Omega^{-3}\|Q_{{\Omega}}\phi\|_2^3\|\partial_{x_3} Q_{{\Omega}}\phi\|_2\geq -C \kappa_\Omega^{-6} \|Q_{{\Omega}}\phi\|_2^6$$
by minimizing with respect to $\|\partial_{x_3} Q_{{\Omega}}\phi\|_2$.
Thus, for sufficiently large $\Omega>0$, all terms in (\ref{3.11}) that involve $Q_{{\Omega}}\phi$ satisfy
\begin{align}
&\big\|\sqrt{A_{{\Omega}}} Q_{{\Omega}}\phi\big\|^2_2-2\sqrt{{\Omega}}\|Q_{{\Omega}}\phi\|^2_2+\|\partial_{x_3} Q_{{\Omega}}\phi\|_2^2
-\frac{C}{2}\kappa_\Omega^{-3}\|Q_{{\Omega}}\phi\|_2^3\|\nabla Q_{{\Omega}}\phi\|_2\nonumber\\
\geq&2\sqrt{{\Omega}}\|Q_{{\Omega}}\phi\|^2_2-C {\Omega}^{\f14}\kappa_\Omega^{-3}\|Q_{{\Omega}}\phi\|^4_2-C \kappa_\Omega^{-6} \|Q_{{\Omega}}\phi\|_2^6\label{3.14}\\
\geq&\|Q_{{\Omega}}\phi\|_2^2\Big[2\sqrt{{\Omega}}-C {\Omega}^{\f14} \Big(\ln \big(2\sqrt{{\Omega}}\big)\Big)^3-C\Big(\ln \big(2\sqrt{{\Omega}}\big)\Big)^6\Big]
\geq 0,\nonumber
\end{align}
where $\|Q_{{\Omega}}\phi\|_2\leq1$ and $\kappa_\Omega=\frac{1}{\ln \big(2\sqrt{{\Omega}}\big)}$ are used.

Thus, we conclude from (\ref{3.11}), (\ref{3.13}) and (\ref{3.14}) that
 %for sufficiently large ${\Omega}$,
\begin{align}
\E_{{\Omega}}(\phi)\geq&2\sqrt{{\Omega}}-\f{1}{48}\big(\ln B_{{\Omega}}\big)^2+\f{1}{12}(\ln B_{{\Omega}})(\ln \ln B_{{\Omega}})-C\ln B_{{\Omega}}\label{3.19}\\
\geq&2\sqrt{{\Omega}}-\f{1}{192}(\ln {\Omega})^2+\f{1}{24}(\ln {\Omega})(\ln \ln {\Omega})-C\ln {\Omega} \ \ \text{  as  }\  \ {\Omega}\to+\infty,\nonumber
\end{align}
where $B_{{\Omega}}=2\sqrt{{\Omega}}$ is used.
The proof of Proposition \ref{pro3.3} is therefore complete.\qed

\vskip 0.05truein

We are now ready to prove Theorem \ref{thm3.1}.
\vskip 0.05truein

\noindent{\bf Proof of Theorem \ref{thm3.1}.} In view of Proposition \ref{pro3.3}, it suffices to prove that
\begin{equation}\label{3.195}
E_{\Omega}\leq2\sqrt{\Omega}-\f{1}{192}(\ln \Omega)^2+\f{1}{24}(\ln \Omega)(\ln \ln \Omega)+C\ln \Omega\ \  \text{as}\  \  \Omega\to+\infty,
\end{equation}
where the constant $C>0$ is independent of $\Omega$. Choose a test function
\begin{equation*}
\varphi_{{\Omega}}(x)=\psi_{{\Omega}}(x_{\perp})\frac{\sqrt{|\ln B_{{\Omega}}|}}{2\sqrt{2}}\Big(\cosh \frac{|\ln B_{{\Omega}}|x_3}{4}\Big)^{-1}
:=\psi_{{\Omega}}(x_{\perp}) \tilde{g}_{{\Omega}}(x_3), \ \ x=(x_{\perp}, x_3) \in \R^3,
\end{equation*}
where $B_{{\Omega}}=2\sqrt{{\Omega}}$, and $\psi_{{\Omega}}(x_{\perp})$ is as in (\ref{ot}). One can check that for any ${\Omega}>1$,
$$\int_{\R^3}\varphi_{{\Omega}}^2dx=1\ \ \text{and}\ \ \langle \varphi_{{\Omega}},~A_{{\Omega}} \varphi_{{\Omega}}\rangle=2\sqrt{{\Omega}}.$$
Note that
$
\tilde{g}_{{\Omega}}(x_3)=\Big(\int_{\R^2}\varphi_{{\Omega}}^2(x_{\perp}, x_3)dx_{\perp}\Big)^{\f12}$ for $x=(x_{\perp}, x_3)\in\R^3$.
One can calculate that
\begin{equation*}
\begin{aligned}
&\|\partial_{x_3}\varphi_{{\Omega}}\|_2^2-\frac{\ln B_{{\Omega}}}{2}\int_{\R}\Big(\int_{\R^2}\varphi_{{\Omega}}^2(x_{\perp}, x_3)dx_{\perp}\Big)^{2}dx_3\\
=&\int_{\R}|\tilde{g}'_{{\Omega}}(x_3)|^2dx_3-\frac{\ln B_{{\Omega}}}{2}\int_{\R}|\tilde{g}_{{\Omega}}(x_3)|^4dx_3=-\frac{(\ln B_{{\Omega}})^2}{48}.
\end{aligned}
\end{equation*}

Thus, we deduce from \cite[Proposition 2.1]{FG} that for sufficiently large ${\Omega}>1$,
\begin{align}
E_{{\Omega}}\leq&\E_{{\Omega}}(\varphi_{{\Omega}})\label{3.15}\\
\leq&2\sqrt{{\Omega}}-\frac{(\ln B_{{\Omega}})^2}{48}+(\ln \ln B_{{\Omega}})\int_{\R}\Big(\int_{\R^2}\varphi_{{\Omega}}^2(x_{\perp}, x_3)dx_{\perp}\Big)^2dx_3 +|R_{B_{{\Omega}}}^{(1)}(\varphi_{{\Omega}})|
-R_{B_{{\Omega}}}^{(2)}(\varphi_{{\Omega}}),\nonumber
\end{align}
where
\begin{equation*}
|R_{B_{\Omega}}^{(1)}(\varphi_{{\Omega}})|\leq\frac{\ln B_{\Omega}}{2}\|\varphi_{{\Omega}}\|_2^4+\frac{4}{\sqrt{\ln B_{\Omega}}}\|\varphi_{{\Omega}}\|_2^{\frac{5}{2}}\|\partial_{x_3} \varphi_{{\Omega}}\|_2^{\frac{3}{2}},
\end{equation*}
and $R_{B_{\Omega}}^{(2)}(\varphi_{{\Omega}})$ is defined as in \eqref{3.10}.
Note that
$\|\partial_{x_3} \varphi_{{\Omega}}\|_2^2=\int_{\R}|\tilde{g}'_{{\Omega}}(x_3)|^2dx_3= \frac{(\ln B_{{\Omega}})^2}{48}. $
It then follows that
%\begin{equation}\label{3.17}
$|R_{B_{{\Omega}}}^{(1)}(\varphi_{{\Omega}})|\leq C \ln B_{{\Omega}}$.
%\end{equation}
One can also calculate that
\begin{align}
R_{B_{\Omega}}^{(2)}(\varphi_{\Omega})\geq&-\int_{\R}\iint_{\R^2\times\R^2}
|\varphi_{\Omega}(x_{\perp}, x_3)|^2|\varphi_{\Omega}(y_{\perp}, x_3)|^2\ln_+ \big(\sqrt{B_{{\Omega}}}|x_{\perp}-y_{\perp}|\big)dx_{\perp}dy_{\perp}dx_3\nonumber\\
=&-\int_{\R} \tilde{g}^4_{{\Omega}}(x_3)dx_3\iint_{\R^2\times\R^2}|\psi_{{\Omega}}(x_{\perp})|^2
|\psi_{\Omega}(y_{\perp})|^2\ln_+ \big(\sqrt{B_{\Omega}}|x_{\perp}-y_{\perp}|\big)dx_{\perp}dy_{\perp} \nonumber\\
\geq&-\frac{\ln B_{\Omega}}{12 \pi^{2}}\iint_{\R^2\times\R^2}e^{-(|x_{\perp}|^2+|y_{\perp}|^2)}
\ln\big[\sqrt{2}(|x_{\perp}|+|y_{\perp}|)+1\big]dx_{\perp}dy_{\perp}\label{3.18}\\
\geq& -C\ln B_{\Omega}\ \  \text{as}\  \  \Omega\to+\infty.\nonumber
\end{align}
We then derive from (\ref{3.15}) and (\ref{3.18}) that for sufficiently large ${\Omega}$,
\begin{equation}\label{3.20}
E_{{\Omega}}\leq 2\sqrt{{\Omega}}-\frac{1}{48}(\ln B_{{\Omega}})^2+\f{1}{12}(\ln B_{{\Omega}})(\ln \ln B_{{\Omega}})+C \ln B_{{\Omega}}.
\end{equation}
Together with Proposition \ref{pro3.3}, we finally conclude that
$$E_{\Omega}=2\sqrt{\Omega}-\f{1}{192}(\ln \Omega)^2+\f{1}{24}(\ln \Omega)(\ln \ln \Omega)+O(\ln \Omega)\ \  \text{as}\  \  \Omega\to+\infty,$$
and the proof of Theorem \ref{thm3.1} is therefore complete.\qed

\section{Limiting Behavior of $u_\Omega$ as $\Omega\to+\infty$}\label{S4}
We now turn from the energy asymptotics to the limiting profile of
minimizers.
Motivated by Theorem \ref{thm3.1}, define
\begin{equation}\label{scaling}
w_\Omega(x):=\Omega^{-\f14}(\ln \Omega)^{-\f12} u_{\Omega}\big( \Omega^{-\f14}x_\perp,
( \ln \Omega)^{-1}x_3\big),
\end{equation}
where $x=(x_\perp,x_3)\in\R^2\times\R$, and $u_\Omega$ is a minimizer of $E_\Omega$ defined by \eqref{inf}.
To derive the convergence, we first project in the first two coordinates onto the ground state eigenspace of $A_1$, where $A_1$ is as in \eqref{A}.
The following proposition indicates that the first two coordinates of $w_\Omega$ are nearly lying in the  ground state eigenspace  of $A_1$.
\begin{Proposition}
\label{lem4.1}
For every $\Omega>1$, let $u_\Omega\in \H$ be the corresponding minimizer of \eqref{inf}. Then the function $w_\Omega$ defined by \eqref{scaling} satisfies
\begin{equation}\label{w}
\|Q_1w_\Omega\|_{\H}=\|w_\Omega-P_1w_\Omega\|_{\H}\to0\ \ \text{as}\ \ \Omega\to+\infty,
%w_\Omega=P_1w_\Omega+\varepsilon_\Omega,
\end{equation}
where $P_1$ and $Q_1$ are defined by \eqref{P} and \eqref{QO}, respectively.
%, and the function $\varepsilon_\Omega$ satisfies $\|\varepsilon_\Omega\|_{\H}\to0$ as $$.
\end{Proposition}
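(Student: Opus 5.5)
The plan is to rescale the estimates of Section \ref{S3} and to exploit the fact that the upper bound \eqref{3.20} and the lower bound \eqref{3.19} for $E_\Omega$ agree up to $O(\ln\Omega)$. First, the scaling identities. Since $\psi_\Omega(x_\perp)=\Omega^{1/4}\psi_1(\Omega^{1/4}x_\perp)$, the change of variables in \eqref{scaling} gives $(P_\Omega u_\Omega)$ rescaled $=P_1 w_\Omega$, and likewise for $Q$. It also gives
\begin{equation*}
\|Q_1w_\Omega\|_2=\|Q_\Omega u_\Omega\|_2,\qquad \|\sqrt{A_1}Q_1w_\Omega\|_2^2=\Omega^{-\frac12}\|\sqrt{A_\Omega}Q_\Omega u_\Omega\|_2^2,\qquad \|\partial_{x_3}Q_1w_\Omega\|_2^2=(\ln\Omega)^{-2}\|\partial_{x_3}Q_\Omega u_\Omega\|_2^2 .
\end{equation*}
Since $\|Q_1w\|_{\H}^2\le C\big(\|\sqrt{A_1}Q_1w\|_2^2+\|\partial_{x_3}Q_1w\|_2^2+\|Q_1w\|_2^2\big)$, it suffices to prove three bounds:
\begin{equation*}
\|\sqrt{A_\Omega}Q_\Omega u_\Omega\|_2^2=o(\sqrt\Omega),\qquad \|\partial_{x_3}Q_\Omega u_\Omega\|_2^2=o((\ln\Omega)^2),\qquad \|Q_\Omega u_\Omega\|_2\to0 .
\end{equation*}

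Second, I redo the lower bound \eqref{3.11}--\eqref{3.14} with $\phi=u_\Omega$ and $\kappa=\kappa_\Omega$, this time keeping a fixed fraction of the positive $Q$-terms instead of discarding them. I split
\begin{equation*}
\|\sqrt{A_\Omega}Q_\Omega\phi\|_2^2-2\sqrt\Omega\|Q_\Omega\phi\|_2^2\ \ge\ \tfrac12\|\sqrt{A_\Omega}Q_\Omega\phi\|_2^2+\Big(\tfrac12\|\sqrt{A_\Omega}Q_\Omega\phi\|_2^2-2\sqrt\Omega\|Q_\Omega\phi\|_2^2\Big),
\end{equation*}
and the bracket is $\ge0$ by \eqref{3.12}. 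In the same way, $\|\partial_{x_3}Q_\Omega\phi\|_2^2$ is split into two halves. The Coulomb remainder $C\kappa_\Omega^{-3}\|Q_\Omega\phi\|_2^3\|\nabla Q_\Omega\phi\|_2$ is then absorbed exactly as in the proof of Proposition \ref{pro3.3}, using only a quarter of each positive term. This costs errors of size $C\Omega^{1/4}(\ln\Omega)^3\|Q_\Omega\phi\|_2^4+C(\ln\Omega)^6\|Q_\Omega\phi\|_2^6$. Both are dominated by $\sqrt\Omega\|Q_\Omega\phi\|_2^2$ once $\Omega$ is large, and that term is itself available from the kept quarter of $\|\sqrt{A_\Omega}Q_\Omega\phi\|_2^2\ge4\sqrt\Omega\|Q_\Omega\phi\|_2^2$. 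Combining with Lemma \ref{lemP} for the $P$-part, I expect
\begin{equation*}
E_\Omega=\E_\Omega(u_\Omega)\ge 2\sqrt\Omega-\tfrac1{48}(\ln B_\Omega)^2+\tfrac1{12}\ln B_\Omega\ln\ln B_\Omega-C\ln B_\Omega+c\Big(\|\sqrt{A_\Omega}Q_\Omega u_\Omega\|_2^2+\|\partial_{x_3}Q_\Omega u_\Omega\|_2^2\Big).
\end{equation*}

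Third, comparing this with the upper bound \eqref{3.20} gives
\begin{equation*}
\|\sqrt{A_\Omega}Q_\Omega u_\Omega\|_2^2+\|\partial_{x_3}Q_\Omega u_\Omega\|_2^2\le C\ln\Omega .
\end{equation*}
By \eqref{3.12} it follows that $\|Q_\Omega u_\Omega\|_2^2\le C\ln\Omega/\sqrt\Omega\to0$. Inserting these bounds into the scaling identities gives $\|\sqrt{A_1}Q_1w_\Omega\|_2^2\le C\ln\Omega/\sqrt\Omega$ and $\|\partial_{x_3}Q_1w_\Omega\|_2^2\le C/\ln\Omega$. Both tend to zero, which proves \eqref{w}.

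The main obstacle is the bookkeeping in the second step. I must check that retaining fixed fractions of the positive $Q$-terms does not spoil the absorption of the nonlocal remainder in \eqref{3.14}, since the factor $\kappa_\Omega^{-3}=(\ln B_\Omega)^3$ appears there. I must also confirm that Lemma \ref{lemP} holds with the same $\kappa_\Omega$ used in \eqref{3.11}. Otherwise the gap between the upper and lower bounds would no longer be $O(\ln\Omega)$. That gap is the only quantitative input, and any $o(\sqrt\Omega)$ and $o((\ln\Omega)^2)$ control would in fact suffice.
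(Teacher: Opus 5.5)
Your proposal is correct and is essentially the paper's argument. The paper first sandwiches the full $Q_\Omega$-part of \eqref{3.11} between $0$ (by \eqref{3.14}) and $C\ln\Omega$ (by Lemma \ref{lemP} against the upper bound \eqref{3.20}), which gives \eqref{4.1}, and only then extracts \eqref{4.2} by Young's inequality and the spectral gap \eqref{3.12}. You instead retain fixed fractions of the positive terms before comparing, which is a mere reordering that yields the same bound $\|\sqrt{A_\Omega}Q_\Omega u_\Omega\|_2^2+\sqrt{\Omega}\|Q_\Omega u_\Omega\|_2^2+\|\partial_{x_3}Q_\Omega u_\Omega\|_2^2\le C\ln\Omega$ and the same rescaled conclusion. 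Your worry about $\kappa_\Omega$ is unfounded, since \eqref{3.11}, Lemma \ref{lemP} and \eqref{3.14} all use $\kappa_\Omega=1/\ln B_\Omega$.
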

\noindent{\bf Proof.} Let $u_\Omega\in \H$ be a minimizer of $E_\Omega$ defined by \eqref{inf}.
%It follows from Lemma \ref{lem3.2} that $u_\Omega:=\alpha^{-\f32}u_\Omega(\alpha^{-1}x)$ is a minimizer of $E_{\Omega\alpha^{-4}, 1}$.
By Lemma \ref{lemP}, we obtain from  \eqref{3.11}, \eqref{3.14} and \eqref{3.20} that there exists a constant $C>0$, independent of $\Omega$, such that for sufficiently large $\Omega>0$,
\begin{align}
 0\leq&\big\|\sqrt{A_{\Omega}} Q_{\Omega}u_\Omega\big\|_2^2-2\sqrt{\Omega}\|Q_{\Omega}u_\Omega\|_2^2+\|\partial_{x_3}Q_{\Omega} u_\Omega\|_2^2-\frac{C}{2}\Big(\ln \big(2\sqrt{\Omega}\big)\Big)^3 \|Q_{\Omega}u_\Omega\|_2^3 \|\nabla Q_{\Omega}u_\Omega\|_2 \nonumber\\
\leq & C \ln \Omega,\label{4.1}
\end{align}
where $Q_{\Omega}=I-P_{\Omega}$.
%Note that
%$$\frac{C}{2}\Big(\ln \big(2\sqrt{\Omega}\big)\Big)^3 \|Q_{\Omega}u_\Omega\|_2^3 \|\nabla Q_{\Omega}u_\Omega\|_2\leq\f18 \|\nabla Q_{\Omega}u_\Omega\|_2^2+\frac{C^2}{2}\Big(\ln \big(2\sqrt{\Omega}\big)\Big)^6 \|Q_{\Omega}u_\Omega\|_2^6.$$
By Young's inequality, it then follows from \eqref{3.12}  and \eqref{4.1} that
%\begin{align}
%&\big\|\sqrt{A_{\Omega}} Q_{\Omega}u_\Omega\big\|_2^2-2\sqrt{\Omega}\|Q_{\Omega}u_\Omega\|_2^2+\|\partial_{x_3}Q_{\Omega} u_\Omega\|_2^2-\frac{C}{2}\Big(\ln \big(2\sqrt{\Omega}\big)\Big)^3 \|Q_{\Omega}u_\Omega\|_2^3 \|\nabla Q_{\Omega} u_\Omega\|_2\nonumber\\
%\geq&\f78\big\|\sqrt{A_{\Omega}} Q_{\Omega}u_\Omega\big\|_2^2-2\sqrt{\Omega}\|Q_{\Omega} u_\Omega\|_2^2+\f78\|\partial_{x_3}Q_{\Omega}u_\Omega\|_2^2-C\Big(\ln \big(2\sqrt{\Omega}\big)\Big)^6 \|Q_{\Omega} u_\Omega\|_2^6\nonumber\\
%\geq&\f18\big\|\sqrt{A_{\Omega}} Q_{\Omega}u_\Omega\big\|_2^2+\sqrt{\Omega}\|Q_{\Omega} u_\Omega\|_2^2+\f78\|\partial_{x_3}Q_{\Omega}u_\Omega\|_2^2-C\Big(\ln \big(2\sqrt{\Omega}\big)\Big)^6 \|Q_{\Omega} u_\Omega\|_2^6\nonumber\\
%\geq&\f18\big\|\sqrt{A_{\Omega}} Q_{\Omega}u_\Omega\big\|_2^2+\Big[\sqrt{\Omega}-C\Big(\ln \big(2\sqrt{\Omega}\big)\Big)^6\Big]\|Q_{\Omega}u_\Omega\|_2^2+\f78\|\partial_{x_3}Q_{\Omega}u_\Omega\|_2^2
%\geq 0\ \ \text{as}\ \ \Omega\to+\infty,\nonumber
%\end{align}
%where $\|Q_{\Omega}u_\Omega\|_2\leq\|u_\Omega\|_2=1$ is used. This implies that
\begin{equation}\label{4.2}
\big\|\sqrt{A_{\Omega}} Q_{\Omega}u_\Omega\big\|_2^2+\sqrt{\Omega}\|Q_{\Omega}u_\Omega\|_2^2+\|\partial_{x_3}Q_{\Omega} u_\Omega\|_2^2\leq C\ln \Omega \ \ \text{as}\ \ \Omega\to+\infty.
\end{equation}
One can further calculate from (\ref{scaling}) and \eqref{4.2} that
\begin{align}
%&\int_{\R^3}\big|\nabla \big(u_\Omega-P_{\Omega} %u_\Omega\big)\big|^2dx+\Omega\int_{\R^3}|x_\perp|^2\big|u_\Omega-P_{\Omega} u_\Omega\big|^2dx+\sqrt{\Omega}\int_{\R^3}\big|u_\Omega-P_{\Omega} u_\Omega\big|^2dx\nonumber\\
%=&\sqrt{\Omega}\int_{\R^3}\Big|\nabla_{\perp}\Big(w_\Omega(z_{\perp}, z_3)-\psi_{\Omega}\big(\Omega^{-\f14}z_{\perp}\big)\int_{\R^2}\psi_{\Omega}(y_\perp)
%w_\Omega\big(\Omega^{\f14}y_{\perp}, z_3\big)dy_\perp\Big)\Big|^2dz\label{4.4}\\
%&+\big(\ln \Omega\big)^2\int_{\R^3}\Big|\partial_{x_3}\Big(w_\Omega(z_{\perp}, z_3)-\psi_{\Omega}\big(\Omega^{-\f14}z_{\perp}\big)\int_{\R^2}\psi_{\Omega}(y_\perp)
%w_\Omega\big(\Omega^{\f14}y_{\perp}, z_3\big)dy_\perp\Big)\Big|^2dz\nonumber\\
%&+\sqrt{\Omega}\int_{\R^3}\big(|z_{\perp}|^2+1\big)\Big|w_\Omega(z_{\perp}, z_3)-\psi_{\Omega}\big(\Omega^{-\f14}z_{\perp}\big)\int_{\R^2}\psi_{\Omega}(y_\perp)w_\Omega\big(\Omega^{\f14}y_{\perp}, z_3\big)dy_\perp\Big|^2dz\nonumber\\
&\sqrt{\Omega}\int_{\R^3}|\nabla_{\perp} Q_1w_\Omega|^2dz+\big(\ln \Omega\big)^2\int_{\R^3}|\partial_{z_3} Q_1w_\Omega|^2dz
+\sqrt{\Omega}\int_{\R^3}\big(|z_{\perp}|^2+1\big)|Q_1w_\Omega|^2dz\nonumber\\
\leq& C\ln \Omega \quad \text{as} \quad \Omega\to+\infty,\label{4.4}
\end{align}
 where $Q_1=I-P_{1}$.
%, and the fact
%$$\psi_{\Omega}(\Omega^{-\f14}z_{\perp})\int_{\R^2}\psi_{\Omega}(y_\perp)w_\Omega\big(\Omega^{\f14}y_{\perp}, z_3\big)dy_\perp
%=\psi_{1}(z_{\perp})\int_{\R^2}\psi_{1}(y_{\perp})w_\Omega(y_{\perp}, z_3)dy_\perp:=P_{1}w_\Omega$$
%is used in the last identity.
We thus conclude from (\ref{4.4}) that
\begin{equation*}
\int_{\R^3}\big|\nabla Q_1w_\Omega\big|^2dz+\int_{\R^3}(1+|z_{\perp}|^2)\big|Q_1w_\Omega\big|^2dz\to0 \ \ ~\text{as}\ \ \Omega\to+\infty.
\end{equation*}
The proof of Proposition \ref{lem4.1} is therefore complete.
\qed

\vskip 0.05truein
We next prove a form estimate for the Coulomb kernel. It allows us to control
the singular interaction by the positive operator
$A_\Omega-2\sqrt{\Omega}-\partial_{x_3}^2$, up to a logarithmic error.

\begin{Lemma}\label{lemB}
For any $\lambda>0$ and $0<\eta<1$, there exists a constant $C_{\lambda, \eta}>0$, independent of sufficiently large $\Omega>0$, such that
$$\frac{\lambda}{|x|}\leq \eta\big( A_\Omega-2\sqrt{\Omega}-\partial_{x_3}^2 \big) + C_{\lambda,\eta} (\ln \Omega)^2$$
in the sense of quadratic forms on $L^2(\R^3)$, where $A_\Omega$ is as in \eqref{A}.
\end{Lemma}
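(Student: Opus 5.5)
The plan is to split every $u\in\H$ into its lowest transverse mode and the rest, $u=P_\Omega u+Q_\Omega u$, with $P_\Omega$ and $Q_\Omega$ as in \eqref{P} and \eqref{QO}. Since $|x|^{-1}$ is a positive multiplication operator, Cauchy--Schwarz gives $2|\langle a,|x|^{-1}b\rangle|\le\langle a,|x|^{-1}a\rangle+\langle b,|x|^{-1}b\rangle$. Applying this to the cross term yields
\begin{equation*}
\Big\langle u,\frac{\lambda}{|x|}u\Big\rangle\le 2\lambda\Big\langle P_\Omega u,\frac{1}{|x|}P_\Omega u\Big\rangle+2\lambda\Big\langle Q_\Omega u,\frac{1}{|x|}Q_\Omega u\Big\rangle .
\end{equation*}
On the right-hand side, $P_\Omega$ and $Q_\Omega$ commute with $A_\Omega$ and $\partial_{x_3}$, so by \eqref{3.1}--\eqref{3.3} the form of $A_\Omega-2\sqrt\Omega-\partial_{x_3}^2$ splits as well. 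On the range of $P_\Omega$ it equals $\|\partial_{x_3}P_\Omega u\|_2^2$. On the range of $Q_\Omega$ it equals $\|\sqrt{A_\Omega}Q_\Omega u\|_2^2-2\sqrt\Omega\|Q_\Omega u\|_2^2+\|\partial_{x_3}Q_\Omega u\|_2^2$. It therefore suffices to bound the two pieces separately.

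\textbf{Higher modes.} By \eqref{3.12}, on the range of $Q_\Omega$ we have $A_\Omega-2\sqrt\Omega\ge\frac12A_\Omega\ge\frac12(-\Delta_\perp)$. Hence $-\Delta\le 2(A_\Omega-2\sqrt\Omega-\partial_{x_3}^2)$ there. I would combine this with the hydrogen-type bound $|x|^{-1}\le\varepsilon(-\Delta)+(4\varepsilon)^{-1}$ on $L^2(\R^3)$, taking $\varepsilon=\eta/(4\lambda)$. This controls $2\lambda\langle Q_\Omega u,|x|^{-1}Q_\Omega u\rangle$ by $\frac{\eta}{2}$ times the $Q$-part of the form plus $C_{\lambda,\eta}\|u\|_2^2$, with a constant independent of $\Omega$.

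\textbf{Lowest mode.} Write $P_\Omega u=\psi_\Omega(x_\perp)g(x_3)$. Then
\begin{equation*}
\Big\langle P_\Omega u,\frac{1}{|x|}P_\Omega u\Big\rangle=\int_\R W_\Omega(x_3)|g(x_3)|^2dx_3,\qquad W_\Omega(x_3):=\int_{\R^2}\frac{\psi_\Omega^2(x_\perp)}{\sqrt{|x_\perp|^2+x_3^2}}dx_\perp .
\end{equation*}
The key facts about $W_\Omega$ are that $W_\Omega(x_3)\le|x_3|^{-1}$ and $W_\Omega\le C\Omega^{1/4}$. Consequently $\int_{-1}^1W_\Omega\,dx_3\le C\ln\Omega$, via $\int_0^1\min(\Omega^{1/4},t^{-1})dt\approx\frac14\ln\Omega$, while $W_\Omega\le1$ for $|x_3|\ge1$. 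Hence
\begin{equation*}
\int_\R W_\Omega|g|^2\le C\ln\Omega\,\|g\|_\infty^2+\|g\|_2^2 .
\end{equation*}
I then apply the one-dimensional bound $\|g\|_\infty^2\le\|g\|_2\|g'\|_2\le\delta\|g'\|_2^2+(4\delta)^{-1}\|g\|_2^2$, choosing $\delta=\eta/(4\lambda C\ln\Omega)$. This gives
\begin{equation*}
2\lambda\Big\langle P_\Omega u,\frac{1}{|x|}P_\Omega u\Big\rangle\le\frac{\eta}{2}\|\partial_{x_3}P_\Omega u\|_2^2+C_{\lambda,\eta}(\ln\Omega)^2\|u\|_2^2 .
\end{equation*}
This step produces the $(\ln\Omega)^2$ in the statement.

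Summing the two estimates gives the lemma. The only step needing real care is the lowest-mode estimate, namely the logarithmic $L^1_{\mathrm{loc}}$ bound on the effective one-dimensional potential $W_\Omega$. It needs two inputs: the uniform bound $W_\Omega(0)=\int\psi_\Omega^2/|x_\perp|\,dx_\perp=C\Omega^{1/4}$, which follows from scaling, and the fact that the cutoff between the two regimes occurs at $|x_3|\sim\Omega^{-1/4}$. Everything else is a routine Cauchy--Schwarz or Young argument.
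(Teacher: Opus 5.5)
Your proposal is correct and follows the paper's proof essentially step by step. Both use the same $P_\Omega/Q_\Omega$ split, the hydrogen bound plus the transverse spectral gap on the higher modes, and, for the lowest mode, the logarithmic bound on $\int_{|x_3|\le1}W_\Omega$ together with the one-dimensional $L^\infty$ interpolation and Young's inequality; your fixed factor-$2$ Cauchy--Schwarz on the cross term simply replaces the paper's $\delta$-weighted version.

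There is one tiny arithmetic slip. With $\varepsilon=\eta/(4\lambda)$ the $Q$-part receives coefficient $\eta$ rather than $\eta/2$. This is harmless because both parts of the form are nonnegative, and you can also fix it by taking $\varepsilon=\eta/(8\lambda)$.
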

\noindent{\bf Proof.} For any $\phi\in\H$, we have $\phi=P_\Omega \phi+Q_\Omega \phi$, where $P_\Omega$ and $Q_\Omega$ are as in \eqref{P} and \eqref{QO}, respectively. Denote $P_\Omega\phi(x_{\perp}, x_{3}):=\psi_{\Omega}(x_{\perp})h_\Omega(x_3)$ and $\tilde{v}_{\Omega}(x_3):=\int_{\R^2}\frac{|\psi_{\Omega}(x_{\perp})|^2}{\sqrt{|x_{\perp}|^2+x_3^2}}dx_{\perp}$, where $\psi_{\Omega}$ is as in \eqref{ot}. Since $\psi_{\Omega}(x_{\perp})=\Omega^{\frac{1}{4}}\psi_{1}(\Omega^{\frac{1}{4}}x_{\perp})$, we have
\begin{equation}\label{B2}
\tilde{v}_{\Omega}(x_3)=\Omega^{\frac{1}{4}}\int_{\R^2}\frac{|\psi_{1}(x_{\perp})|^2}{\sqrt{|x_{\perp}|^2
+\Omega^{\frac{1}{2}}x_3^2}}dx_{\perp}\leq \frac{C}{\sqrt{x_3^2+\Omega^{-\frac{1}{2}}}},
\end{equation}
where $\int_{\R^2}\frac{|\psi_{1}(x_{\perp})|^2}{\sqrt{|x_{\perp}|^2+t^2}}dx_{\perp}\leq \frac{C}{\sqrt{1+t^2}}$ is used. Moreover, one can check that
\begin{equation}\label{B3}
\int_{|x_3|\leq1}\frac{dx_3}{\sqrt{x_3^{2}+\Omega^{-\f12}}}\leq C \ln \Omega,
\end{equation}
and
\begin{equation}\label{B4}
\|h_\Omega(x_3)\|_{L^\infty(\R^1)}^2\leq 2\|h_\Omega(x_3)\|_{L^2(\R^1)}\|h'_\Omega(x_3)\|_{L^2(\R^1)}.
\end{equation}
It then follows from \eqref{B2}--\eqref{B4} that for any $0<s<1$,
\begin{align}
\int_{\R^3}\frac{|P_\Omega \phi(x)|^2}{|x|}dx=&\int_{\R}\tilde{v}_\Omega(x_3)|h_\Omega(x_3)|^2dx_3
\leq C\int_{\R}\frac{|h_\Omega(x_3)|^2}{\sqrt{x_3^2+\Omega^{-\f12}}}dx_3\nonumber\\
\leq&C\Big(\int_{|x_3|\leq 1}\frac{|h_\Omega(x_3)|^2}{\sqrt{x_3^2+\Omega^{-\f12}}}dx_3+\|h_\Omega\|_{L^2(\R^1)}^2\Big)\nonumber\\
\leq&C (\ln \Omega) \|h_\Omega\|_{L^2(\R^1)}\|h'_\Omega\|_{L^2(\R^1)}+C \|h_\Omega\|_{L^2(\R^1)}^2\nonumber\\
\leq& s\|h'_\Omega\|_{L^2(\R^1)}^2+C\Big(\f1s, 1\Big)(\ln \Omega)^2 \|h_\Omega\|_{L^2(\R^1)}^2,\nonumber
\end{align}
which implies that
\begin{equation}\label{B5}
\Big\langle P_{\Omega}\phi, \f{1}{|x|}P_{\Omega}\phi\Big\rangle\leq s\|\partial_{x_3}P_{\Omega}\phi\|_{2}^2+C\Big(\f1s, 1\Big)(\ln \Omega)^2 \|P_{\Omega}\phi\|_{2}^2.
\end{equation}

We next estimate $Q_{\Omega}\phi$. Note that for every $0<\varepsilon<1$,
$$\int_{\R^3}\frac{|u(x)|^2}{|x|}dx\leq \varepsilon\|\nabla u\|_2^2+\f1\varepsilon\|u\|_2^2,\ \ u\in H^1(\R^3).$$
Applying this estimate to $u=Q_\Omega \phi$, we have
\begin{equation}\label{B6}
\int_{\R^3}\frac{|Q_\Omega \phi|^2}{|x|}dx\leq \varepsilon\|\nabla Q_\Omega \phi\|_2^2+\f1\varepsilon\|Q_\Omega \phi\|_2^2.
\end{equation}
Since $A_\Omega-2\sqrt{\Omega}\geq2\sqrt{\Omega}$ on $\operatorname{Ran}Q_\Omega$, we derive that
\begin{equation}\label{B7}
\|\nabla_{x_{\perp}} Q_\Omega \phi\|_2^2\leq\langle Q_\Omega \phi, A_\Omega Q_\Omega \phi\rangle
\leq2\langle Q_\Omega \phi, (A_\Omega-2\sqrt{\Omega}) Q_\Omega \phi\rangle.
\end{equation}
It follows from \eqref{B6} and \eqref{B7} that
\begin{equation}\label{B8}
\int_{\R^3}\frac{|Q_\Omega \phi|^2}{|x|}dx\leq\varepsilon\Big[2\big\langle Q_\Omega \phi, (A_\Omega-2\sqrt{\Omega}) Q_\Omega \phi\big\rangle+\|\partial_{x_3} Q_\Omega \phi\|_2^2\Big]+\f1\varepsilon\|Q_\Omega \phi\|_2^2.
\end{equation}
For the mixed term, it can be calculated that for any $0<\delta<1$,
\begin{align}
\Big|\int_{\R^3}\frac{P_\Omega \phi(x) Q_\Omega \phi(x)}{|x|}dx\Big|\leq&\Big(\int_{\R^3}\frac{|P_\Omega \phi(x)|^2}{|x|}dx\Big)^{\f12}\Big(\int_{\R^3}\frac{|Q_\Omega \phi(x)|^2}{|x|}dx\Big)^{\f12}\nonumber\\
\leq&\delta \int_{\R^3}\frac{|P_\Omega \phi(x)|^2}{|x|}dx+\f{1}{4\delta}\int_{\R^3}\frac{|Q_\Omega \phi(x)|^2}{|x|}dx.\label{B9}
\end{align}
Combining (\ref{B5}), (\ref{B8}) and (\ref{B9}), we conclude that for any $\lambda>0$ and $0<\eta<1$,
$$\lambda\int_{\R^3}\frac{|\phi(x)|^2}{|x|}dx\leq\eta\Big[\Big\langle\phi, \big(A_\Omega-2\sqrt{\Omega}\big)\phi\Big\rangle+\|\partial_{x_3}\phi\|_2^2\Big]+C_{\lambda, \eta}(\ln \Omega)^2 \|\phi\|_2^2,$$
and the proof of Lemma \ref{lemB} is therefore complete.
\qed

\vskip 0.05truein
With the aid of Lemma \ref{lemB}, we then study the lowest-mode component
$P_1w_\Omega$.
The following lemma
shows that its associated one-dimensional profile is bounded uniformly  in
 $H^1(\R)$ as $\Omega\to+\infty$.

\begin{Lemma}\label{lem4.2}
Suppose $P_1$ and $w_\Omega$ are defined by \eqref{P} and \eqref{scaling}, respectively. Denote
\begin{equation}\label{c1}
P_1w_\Omega(x_{\perp}, x_3):=\psi_1 (x_{\perp})v_\Omega(x_3),
\end{equation}
where
\begin{equation}\label{4.15}
v_\Omega(x_3):=\int_{\R^2}\psi_1 (y_{\perp})w_\Omega(y_{\perp}, x_3)dy_{\perp}.
\end{equation}
Then $\{v_\Omega\}$ is bounded uniformly in $H^1(\R)$ as $\Omega\to+\infty$.
\end{Lemma}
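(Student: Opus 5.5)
We need $\|v_\Omega\|_{L^2(\R)}$ and $\|v_\Omega'\|_{L^2(\R)}$ bounded. The $L^2$ bound is immediate: by the scaling \eqref{scaling}, $\|w_\Omega\|_2=\|u_\Omega\|_2=1$. Since $\psi_1$ is normalized in $L^2(\R^2)$, Cauchy--Schwarz in $y_\perp$ gives $\|v_\Omega\|_{L^2(\R)}=\|P_1w_\Omega\|_2\le \|w_\Omega\|_2=1$.

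For the derivative, the plan is to read off $\|\partial_{x_3}P_\Omega u_\Omega\|_2$ from the energy expansion. By scaling, $\|\partial_{x_3}P_\Omega u_\Omega\|_2^2=(\ln\Omega)^2\|v_\Omega'\|_{L^2(\R)}^2$, so it suffices to prove
\begin{equation*}
\|\partial_{x_3}P_\Omega u_\Omega\|_2^2\le C(\ln\Omega)^2 .
\end{equation*}
Write $t:=\|\partial_{x_3}P_\Omega u_\Omega\|_2$ and apply the chain \eqref{3.11}, \eqref{313}, \eqref{3.14} with $\phi=u_\Omega$. Lemma~\ref{lemP} was obtained by discarding the bracketed terms of \eqref{313} after minimizing them in $t$; this time we keep them. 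Concretely, we retain the term $\tfrac{\kappa_\Omega}{2}t^2$ from the first bracket and use Young's inequality to absorb the negative terms $Ct^{3/2}/\sqrt{\ln B_\Omega}$ and $Ct$ into a further $\tfrac{\kappa_\Omega}{4}t^2$, at a cost of $O(\ln B_\Omega)$. This gives
\begin{equation*}
\E_\Omega(u_\Omega)\ge 2\sqrt\Omega-\tfrac{1}{48}(\ln B_\Omega)^2+\tfrac1{12}\ln B_\Omega\ln\ln B_\Omega-C\ln B_\Omega+\tfrac{\kappa_\Omega}{4}t^2 .
\end{equation*}
Comparing with the upper bound \eqref{3.20} yields $\kappa_\Omega t^2\le C\ln\Omega$, that is, $t^2\le C(\ln\Omega)^2$ because $\kappa_\Omega=1/\ln B_\Omega$. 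This is exactly the bound we need.

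The main obstacle is the bookkeeping in \eqref{313}. The leading term $(1-\kappa_\Omega)t^2-(1+\kappa_\Omega)C_{B_\Omega}\int(\int|P_\Omega u_\Omega|^2dx_\perp)^2dx_3$ was bounded below by its 1D minimum using the Gagliardo--Nirenberg inequality, and that minimum is of order $(\ln\Omega)^2$. We have to check that holding back the extra $\tfrac{\kappa_\Omega}{4}t^2$ does not spoil the constant $-\tfrac1{48}$ by more than $O(\ln\Omega)$. The remainder terms are $O(\ln\Omega)$ after Young's inequality, so this should work.

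If it fails, the fallback is to use the leading part itself. We split off $\tfrac12(1-\kappa_\Omega)t^2$ and bound the rest below via $\int g^4\le\|g\|_2^3\|g'\|_2$ with $g=(\int|P_\Omega u_\Omega|^2dx_\perp)^{1/2}$, noting that $\|g'\|_2\le t$ by the Cauchy--Schwarz inequality in $x_\perp$. This gives a lower bound of the form $\tfrac12t^2-C\ln\Omega\,t\ge \tfrac14 t^2-C(\ln\Omega)^2$. Combined with the upper bound $\E_\Omega(u_\Omega)\le 2\sqrt\Omega+O((\ln\Omega)^2)$ and the nonnegativity of the $Q_\Omega$-terms from \eqref{3.14}, it directly gives $t^2\le C(\ln\Omega)^2$, with some care for the $\kappa_\Omega$ and $R^{(1)},R^{(2)}$ errors. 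This second route is more robust and is the one I would use.
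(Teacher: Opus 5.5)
Your proposal is correct, but it gets the key bound by a different mechanism than the paper.

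\textbf{Your route.} You control only the projected quantity $t=\|\partial_{x_3}P_\Omega u_\Omega\|_2$, which is exactly what $\|v_\Omega'\|_{L^2(\R)}$ requires. You do this by reusing the Section~3 lower-bound chain \eqref{3.11}, \eqref{313}, \eqref{3.14} with $\phi=u_\Omega$.
\begin{itemize}
\item In your first route you keep part of the $\kappa_\Omega t^2$ reserve and compare with \eqref{3.20}. This works precisely because the upper and lower bounds agree up to $O(\ln\Omega)$, while the retained term is only $t^2/\ln B_\Omega$.
\item Your worry about spoiling the constant $-\frac1{48}$ is unfounded. The retained $\frac{\kappa_\Omega}{4}t^2$ comes out of the $\kappa_\Omega t^2$ reserve, not out of the $(1-\kappa_\Omega)t^2$ used in the one-dimensional minimization, so the leading term is untouched.
\item Your fallback is more robust still: it needs only the crude upper bound $E_\Omega\le 2\sqrt\Omega+O((\ln\Omega)^2)$. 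The $R^{(1)},R^{(2)}$ errors are already packaged in \eqref{313} as $C(\ln B_\Omega+t^{3/2}/\sqrt{\ln B_\Omega}+t)$, so the ``care'' you mention is just Young's inequality.
\end{itemize}

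\textbf{The paper's route.} The paper bounds the full $\|\partial_{x_3}u_\Omega\|_2^2$ through a new two-body form inequality. It first proves Lemma~\ref{lemB}, bounding $1/|x|$ relative to $A_\Omega-2\sqrt\Omega-\partial_{x_3}^2$, which uses only elementary estimates. It then lifts this via center-of-mass and relative coordinates to $\frac12 D(|u|^2,|u|^2)\le\varepsilon\big(\langle u,(A_\Omega-2\sqrt\Omega)u\rangle+\|\partial_{x_3}u\|_2^2\big)+C_\varepsilon(\ln\Omega)^2$ for every normalized $u$.

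\textbf{Comparison.} Your route is more economical for this lemma alone. Combined with \eqref{4.2}, it also recovers the full bound $\|\partial_{x_3}u_\Omega\|_2^2\le C(\ln\Omega)^2$. The paper's route instead produces a tool valid for all normalized functions, not just minimizers, that is reused later: in \eqref{4.26}, and as \eqref{HH} in Section~\ref{S5}. So that inequality would have to be proved at some point anyway.
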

\noindent{\bf Proof.} Suppose $u_\Omega$ is a minimizer of (\ref{inf}). It can be calculated that
$$P_1w_\Omega(x_{\perp}, x_3)=\Omega^{-\f14}(\ln \Omega)^{-\f12}P_\Omega u_\Omega\big(\Omega^{-\f14}x_{\perp}, (\ln \Omega)^{-1}x_3\big) \ \ \text{for}\ \ x=(x_{\perp}, x_3)\in\R^3.$$
Denoting $P_\Omega u_\Omega(x_{\perp}, x_3):=\psi_\Omega (x_{\perp})f_\Omega(x_3)$, we then have $v_\Omega(x_3)=(\ln \Omega)^{-\f12}f_{\Omega}\big((\ln \Omega)^{-1}x_3\big)$.
Since
$$\|v_{\Omega}\|_{L^2(\R)}=\|f_{\Omega}\|_{L^2(\R)}=\|P_{\Omega}u_{\Omega}\|_{2} \leq \|u_{\Omega}\|_{2}=1,$$
and
$$\|v'_{\Omega}\|_{L^2(\R)}^2=(\ln \Omega)^{-2}\|f'_{\Omega}\|_{L^2(\R)}^2
=(\ln \Omega)^{-2}\|\partial_{x_3}(P_{\Omega}u_{\Omega})\|_{2}^2 \leq (\ln \Omega)^{-2}\|\partial_{x_3} u_{\Omega}\|_{2}^2,$$
it suffices to prove that there exists a constant $C>0$, independent of $\Omega$, such that
\begin{equation}\label{4.5}
\|\partial_{x_3} u_{\Omega}\|_{2}^2\leq C (\ln \Omega)^{2}\quad \text{as} \quad \Omega\to+\infty.
\end{equation}
For any $u\in H^1(\R^3)$, denote
\begin{equation}\label{4.9}
B_\Omega(u)=\big\langle u, \big (A_\Omega-2\sqrt{\Omega}\big)u \big\rangle\geq0\ \ \text{and}\ \ G(u)=\|\partial_{x_3} u\|_{2}^2,
\end{equation}
where $A_\Omega$ is as in \eqref{A}. It then follows from \eqref{inf} that
\begin{equation}\label{4.12}
E_\Omega-2\sqrt{\Omega}=B_\Omega(u_\Omega)+G(u_\Omega)-\f12 D\big(|u_\Omega|^2, |u_\Omega|^2\big),
\end{equation}
where $D\big(|u_\Omega|^2, |u_\Omega|^2\big)=\iint_{\R^3\times\R^3}\f{|u_\Omega(x)|^2|u_\Omega(y)|^2}{|x-y|}dxdy$.

Next, we prove that for any $u\in H^1(\R^3)$ with $\|u\|_2=1$ and $0<\varepsilon<1$,
\begin{equation}\label{4.10}
\f12 D\big(|u|^2, |u|^2\big)\leq\varepsilon B_\Omega(u)+\varepsilon G(u)+C_\varepsilon(\ln \Omega)^{2}.
\end{equation}
Let $x=(x_\perp,x_3)\in\mathbb{R}^3,~y=(y_\perp,y_3)\in\mathbb{R}^3$, and introduce the coordinates
\begin{equation}\label{4.6}
R=\frac{x+y}{\sqrt{2}}, \qquad  r=\frac{x-y}{\sqrt{2}},
\end{equation}
where $R=(R_\perp,R_3)\in\mathbb{R}^3$ and $r=(r_\perp,r_3)\in\mathbb{R}^3$.
Then
\begin{align}
&\big(-\Delta_{x_\perp}+\Omega|x_\perp|^2-2\sqrt{\Omega}-\partial_{x_3}^2\big)
+\big(-\Delta_{y_\perp}+\Omega|y_\perp|^2-2\sqrt{\Omega}-\partial_{y_3}^2\big)\nonumber\\
=&\big(-\Delta_{R_\perp}+\Omega|R_\perp|^2-2\sqrt{\Omega}-\partial_{R_3}^2\big)
+\big(-\Delta_{r_\perp}+\Omega|r_\perp|^2-2\sqrt{\Omega}-\partial_{r_3}^2\big)\label{4.7}\\
\geq&-\Delta_{r_\perp}+\Omega|r_\perp|^2-2\sqrt{\Omega}-\partial_{r_3}^2. \nonumber
\end{align}
Moreover, it yields from Lemma \ref{lemB} that for any $\lambda>0$ and $0<\eta<1$,
\begin{equation}\label{r}
\frac{\lambda}{|r|}\leq \eta\big(-\Delta_{r_\perp}+\Omega|r_\perp|^2-2\sqrt{\Omega}-\partial_{r_3}^2 \big) + C_{\lambda,\eta} (\ln \Omega)^2
\end{equation}
in the sense of quadratic forms on $L^2(\R^3)$.
Choosing $\lambda=\frac{1}{2\sqrt{2}}$, we further obtain from (\ref{4.6})--\eqref{r} that
\begin{align}
&\frac{1}{2|x-y|} \label{4.8}\\
\leq& \eta \Big[\big(-\Delta_{x_\perp}+\Omega|x_\perp|^2-2\sqrt{\Omega}-\partial_{x_3}^2\big)
+\big(-\Delta_{y_\perp}+\Omega|y_\perp|^2-2\sqrt{\Omega}-\partial_{y_3}^2\big)\Big]+C_{\eta} (\ln \Omega)^2. \nonumber
\end{align}
It follows from (\ref{4.9}) and \eqref{4.8} that
\begin{align}
\f12 D\big(|u|^2, |u|^2\big)=&\big\langle u(x)u(y), \frac{1}{2|x-y|}u(x)u(y)\big\rangle\nonumber\\
\leq&\Big\langle u(x)u(y), \eta\Big[\big(-\Delta_{x_\perp}+\Omega|x_\perp|^2-2\sqrt{\Omega}-\partial_{x_3}^2\big)
\label{4.11}\\
&+\big(-\Delta_{y_\perp}+\Omega|y_\perp|^2-2\sqrt{\Omega}-\partial_{y_3}^2\big)\Big]u(x)u(y)\Big\rangle+C_{\eta} (\ln \Omega)^2\nonumber\\
=&2 \eta \big[B_{\Omega}(u)+G(u)\big]+C_{\eta} (\ln \Omega)^2.\nonumber
\end{align}
Finally, \eqref{4.10} is proved by taking $\eta=\f\varepsilon2$ in \eqref{4.11}.

Combining \eqref{4.12} and \eqref{4.10}, we have
\begin{equation}\label{4.13}
E_\Omega-2\sqrt{\Omega}\geq(1-\varepsilon)\big[B_{\Omega}(u_{\Omega})+G(u_{\Omega})\big]-C_{\varepsilon} (\ln \Omega)^2.
\end{equation}
On the other hand,
%for any fixed $v(x_3)\in H^1(\R)$ with $\|v\|_{L^2(\R)}=1$, take test function
%$$\bar{u}_{\Omega}(x_\perp, x_3)=\psi_{\Omega}(x_\perp)\sqrt{\ln \Omega}~v((\ln \Omega) x_3)\ \ \text{for}\ \
%x=(x_\perp, x_3)\in \R^3,$$
%where $\psi_{\Omega}(x_\perp)$ is as in \eqref{ot}. It can be calculated that
%$$\E_{\Omega}(\bar{u}_{\Omega})=2\sqrt{\Omega}+(\ln \Omega)^2\int_{\R}|v'(x_3)|^2dx_3-\f12 D(|\bar{u}_{\Omega}|^2, |\bar{u}_{\Omega}|^2),$$
it yields from Theorem \ref{thm3.1} that
\begin{equation}\label{4.14}
E_\Omega-2\sqrt{\Omega}\leq 0 \quad  \text{as} \quad \Omega\to+\infty.
\end{equation}
Together (\ref{4.13}) and \eqref{4.14}, we have
$$(1-\varepsilon)\big[B_{\Omega}(u_{\Omega})+G(u_{\Omega})\big]\leq  C_\varepsilon  (\ln \Omega)^2 \quad  \text{as} \quad \Omega\to+\infty.$$
We thus conclude that
$$G(u_{\Omega})=\|\partial_{x_3} u_{\Omega}\|_{2}^2\leq C (\ln \Omega)^2 \quad  \text{as} \quad \Omega\to+\infty,$$
which implies that \eqref{4.5} holds true. The proof of Lemma \ref{lem4.2} is therefore complete.
\qed

\vskip 0.05truein

Inspired by \cite{BSY, LSY}, we next analyze a basic reduction estimate for the Hartree interaction generated
by the lowest transverse mode.  After projecting
onto the lowest transverse mode, the transverse
variables can be integrated out, and the nonlocal interaction is reduced to an
effective one-dimensional bilinear form. For
$F,G\in L^1(\R)\cap H^1(\R)$, define
\begin{equation*}
 \mathcal K_\Omega(F,G):= \frac1{\ln\Omega} \iint_{\R\times\R}F(x_3)G(y_3)K_\Omega(x_3-y_3) dx_3dy_3,
\end{equation*}
where
\begin{equation}\label{4.16}
K_\Omega(s):=
\iint_{\R^2\times\R^2}
\frac{|\psi_1(x_{\perp})|^2|\psi_1(y_{\perp})|^2}
{\sqrt{s^2+(\ln \Omega)^2\Omega^{-\frac12}|x_{\perp}-y_{\perp}|^2}}
 dx_{\perp}dy_{\perp}
\end{equation}
with
$\psi_{1}(x_{\perp})=\frac{1}{\sqrt{\pi}}
e^{-\frac{|x_{\perp}|^2}{2}}$.
The following lemma gives the required one-dimensional reduction estimate.

\begin{Lemma}%[Bilinear reduction of the Hartree kernel]
\label{lem4.3}
Let $F,G\in L^1(\R)\cap H^1(\R)$, then
\begin{equation*}
\mathcal K_\Omega(F,G) =\frac12\int_\R F(x_3)G(x_3) dx_3 + o(1) \quad \text{as}\quad  \Omega\to+\infty.
\end{equation*}
More precisely, for sufficiently large $\Omega>0$,
%there exists a constant $C_{LH}:=C\big(\|F_\Omega\|_{L^1(\R)}\|G\|_{L^1(\R)},  \|F_\Omega\|_{L^2(\R)}\|G\|_{H^1(\R)}\big)>0$, independent of $\Omega$, such that
%\begin{equation*}       \|F_\Omega\|_{L^1(\R)}+\|G\|_{L^1(\R)} +\|F_\Omega\|_{H^1(\R)}+\|G\|_{H^1(\R)}
        %\leq M,
%\end{equation*}
\begin{equation*}
\left|\mathcal K_\Omega(F,G) -\frac12\int_\R F G dx_3\right|\leq o(1)\big(\|F\|_{L^1(\R)}\|G\|_{L^1(\R)}+  \|F\|_{L^2(\R)}\|G\|_{H^1(\R)}\big).
 %\quad \text{as}\quad  \Omega\to+\infty.
\end{equation*}
\end{Lemma}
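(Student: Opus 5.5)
Set $\varepsilon:=(\ln\Omega)\Omega^{-1/4}\to0$ and write $K_\Omega(s)=\iint |\psi_1(x_\perp)|^2|\psi_1(y_\perp)|^2\,(s^2+\varepsilon^2|x_\perp-y_\perp|^2)^{-1/2}\,dx_\perp dy_\perp$. Changing variables to $z=x_\perp-y_\perp$, the transverse integral becomes a convolution, so
\begin{equation*}
K_\Omega(s)=\int_{\R^2}\rho(z)\,\frac{dz}{\sqrt{s^2+\varepsilon^2|z|^2}},\qquad \rho:=|\psi_1|^2*|\psi_1|^2=\frac{1}{2\pi}e^{-|z|^2/2},
\end{equation*}
which is a Gaussian probability density on $\R^2$. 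We will show that $K_\Omega/\ln\Omega$ is an approximate identity of total weight $1/2$. Scaling gives $K_\Omega(s)=\varepsilon^{-1}k(s/\varepsilon)$ with $k(t):=\int\rho(z)(t^2+|z|^2)^{-1/2}dz$. The function $k$ is even, bounded near $0$ (since $|z|^{-1}\rho\in L^1(\R^2)$), and satisfies $k(t)=|t|^{-1}+O(|t|^{-3})$ as $|t|\to\infty$.

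Split $K_\Omega=K^{\rm near}+K^{\rm far}$ at $|s|=1$. For the near part, a direct computation gives
\begin{equation*}
\int_{|s|\le1}K_\Omega\,ds=\int_{|t|\le 1/\varepsilon}k(t)\,dt=2\ln(1/\varepsilon)+O(1).
\end{equation*}
Since $\ln(1/\varepsilon)=\tfrac14\ln\Omega-\ln\ln\Omega$, this mass equals $\tfrac12\ln\Omega+O(\ln\ln\Omega)$. Dividing by $\ln\Omega$ therefore produces the constant $\tfrac12+O(\ln\ln\Omega/\ln\Omega)$.

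For the far part, $K_\Omega(s)\le|s|^{-1}$ trivially, so $0\le K^{\rm far}\le 1$. This gives
\begin{equation*}
\frac1{\ln\Omega}\Big|\iint F(x_3)G(y_3)K^{\rm far}(x_3-y_3)\,dx_3dy_3\Big|\le\frac{\|F\|_{L^1}\|G\|_{L^1}}{\ln\Omega},
\end{equation*}
which is the $L^1$–$L^1$ error term in the statement.

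For the near part, write $m_\Omega:=\int_{|s|\le1}K_\Omega\,ds$ and decompose
\begin{equation*}
\iint F(x)G(y)K^{\rm near}(x-y)=m_\Omega\int FG+\int F(x)\int_{|s|\le1}K_\Omega(s)\big(G(x-s)-G(x)\big)\,ds\,dx.
\end{equation*}
To control the second term, use $|G(x-s)-G(x)|\le\int_0^{|s|}|G'(x-\theta\,\mathrm{sgn}\,s)|\,d\theta$ together with Minkowski's inequality. This bounds its $L^2_x$ norm by $\int_{|s|\le1}K_\Omega(s)|s|\,ds\,\|G'\|_{L^2}$. Since $K_\Omega(s)|s|\le1$, we have $\int_{|s|\le1}K_\Omega(s)|s|\,ds\le 2$. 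By Cauchy–Schwarz the second term is then at most $2\|F\|_{L^2}\|G'\|_{L^2}$, and after division by $\ln\Omega$ it is $O(1/\ln\Omega)\|F\|_2\|G\|_{H^1}$.

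Combining the three estimates yields
\begin{equation*}
\Big|\mathcal K_\Omega(F,G)-\tfrac12\int FG\Big|\le C\frac{\ln\ln\Omega}{\ln\Omega}\,|\textstyle\int FG|+\frac{C}{\ln\Omega}\big(\|F\|_1\|G\|_1+\|F\|_2\|G\|_{H^1}\big).
\end{equation*}
Because $|\int FG|\le\|F\|_2\|G\|_2$, the right-hand side is of the claimed form, with $o(1)=O(\ln\ln\Omega/\ln\Omega)$.

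The main obstacle is the precise near-mass computation, specifically identifying the coefficient $\tfrac12$. It reduces to showing $\int_{-L}^{L}k(t)\,dt=2\ln L+O(1)$. To do this, exchange the order of integration:
\begin{equation*}
\int_{-L}^{L}\frac{dt}{\sqrt{t^2+|z|^2}}=2\,\mathrm{arcsinh}(L/|z|)=2\ln(2L/|z|)+O(|z|^2/L^2).
\end{equation*}
Integrating against $\rho$ uses $\int\rho\,|\ln|z||\,dz<\infty$, which holds because $\rho$ is Gaussian; this gives the claim. Care is needed that the remainder is uniform. The bound $\mathrm{arcsinh}(a)\le\ln(2a)+1$ for $a\ge1$, together with the fact that the region $|z|>L$ has Gaussian-small $\rho$-mass, handles this.
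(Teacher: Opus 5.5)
Your proof is correct and is essentially the paper's argument: the same rewriting as a convolution in $s=x_3-y_3$, an $H^1$ translation estimate plus an explicit logarithmic mass computation on the near zone, and the bound $K_\Omega\le 1$ paired with $L^1$ norms on $|s|>1$. There are two small differences. The paper cuts the near zone at $|s|=1/\ln\Omega$ and handles $1/\ln\Omega<|s|\le 1$ separately via $K_\Omega(s)\le |s|^{-1}$, whereas you note that $|s|K_\Omega(s)\le 1$ already makes the translation error $O(1/\ln\Omega)$ on all of $|s|\le 1$. Also, your scaling $K_\Omega(s)=\varepsilon^{-1}k(s/\varepsilon)$ with the Gaussian $\rho$ makes the mass asymptotics and their uniformity more explicit than the paper's pointwise expansion \eqref{4.20}. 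Both routes give an $O(\ln\ln\Omega/\ln\Omega)$ error.
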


\noindent{\bf Proof.}
Write $s=x_3-y_3$, then
\begin{equation*}
\mathcal K_\Omega(F,G) =\frac1{\ln\Omega}\int_\R K_\Omega(s) \int_\R F(x_3)G(x_3-s) dx_3ds .
\end{equation*}
For sufficiently large $\Om>0$, we split the $s$-integral into
\begin{equation*}
|s|\le\frac1{\ln\Omega},\qquad \frac1{\ln\Omega}<|s|\leq 1 \qquad\text{and}\qquad1<|s|.
\end{equation*}
For the near part, we have
\begin{equation*}
\begin{aligned}
&\left| \frac1{\ln\Omega}\int_{|s|\leq\frac1{\ln\Omega}} K_\Omega(s)\int_\R F(x_3)\bigl[G(x_3-s)-G(x_3)\bigr]  dx_3ds\right|\\
\leq&\frac1{\ln\Omega}\int_{|s|\leq\frac1{\ln\Omega}} K_\Omega(s) \|F\|_{L^2(\R)} \|G(\cdot-s)-G\|_{L^2(\R)} ds \\
\leq& \frac{C}{\ln\Omega} \|F\|_{L^2(\R)}\|G\|_{H^1(\R)}
\int_{|s|\leq\frac1{\ln\Omega}} |s|K_\Omega(s) ds .
\end{aligned}
\end{equation*}
Since
$\int_{|s|\leq\frac{1}{\ln \Omega}}\frac{|s|}{\sqrt{s^2+(\ln \Omega)^2\Omega^{-\f12}|x_{\perp}-y_{\perp}|^2}}ds\leq\frac{2}{\ln \Omega}$,
we have
\begin{align}
&\int_{|s|\leq\f{1}{\ln \Omega}}|s|K_\Omega(s)ds\label{4.19}\\
=&\iint_{\R^2\times\R^2} |\psi_1(x_{\perp})|^2|\psi_1(y_{\perp})|^2 \Big(\int_{|s|\leq\f{1}{\ln \Omega}}\frac{|s|}{\sqrt{s^2+(\ln \Omega)^2\Omega^{-\f12}|x_{\perp}-y_{\perp}|^2}}ds\Big)dx_{\perp}dy_{\perp}\leq\frac{2}{\ln \Omega}.\nonumber
\end{align}
It then follows from the above facts that
\begin{equation*}
\begin{aligned}
&\frac1{\ln\Omega}\int_{|s|\leq\frac1{\ln\Omega}}  K_\Omega(s) \int_\R F(x_3)G(x_3-s) dx_3ds\\
=& \frac{M_\Omega}{\ln\Omega} \int_\R F G dx_3 + o(1)\|F\|_{L^2(\R)}\|G\|_{H^1(\R)}
\ \ ~\text{as}\ \ \Omega\to+\infty,
\end{aligned}
\end{equation*}
where $M_\Omega:=\int_{|s|\leq\frac1{\ln\Omega}}K_\Omega(s) ds$.

We then estimate $M_\Omega$. One can check that for any fixed $x_{\perp}\neq y_{\perp}\in \R^2$,
\begin{equation}\label{4.20}
\int_{|s|\leq\frac{1}{\ln \Omega}}\frac{1}{\sqrt{s^2+(\ln \Omega)^2\Omega^{-\f12}|x_{\perp}-y_{\perp}|^2}}ds=\f12 \ln \Omega-4\ln \ln\Omega+O(1+\ln |x_{\perp}-y_{\perp}|)
\end{equation}
as $\Omega\to+\infty$. It then follows from \eqref{4.20} that
$$M_\Omega
%=\int_{|s|\leq\frac{1}{\ln \Omega}}\Big(\iint_{\R^2\times\R^2}\f{|\psi_1(x_{\perp})|^2|\psi_1(y_{\perp})|^2}{\sqrt{s^2+(\ln \Omega)^2\Omega^{-\f12}|x_{\perp}-y_{\perp}|^2}}dx_{\perp}dy_{\perp}\Big)ds
=\f12 \ln \Omega-4\ln \ln\Omega+O(1) \quad \text{as}\quad \Omega\to+\infty.$$
Hence the near part satisfies that for sufficiently large $\Omega>0$,
\begin{equation*}
\frac1{\ln\Omega}\int_{|s|\leq\frac1{\ln\Omega}}K_\Omega(s)\int_\R F(x_3)G(x_3-s)dx_3ds= \frac12\int_\R F G dx_3+o(1)\|F\|_{L^2(\R)}\|G\|_{H^1(\R)}.
%\quad \text{as} \quad \Omega\to+\infty.
\end{equation*}

Next, we estimate the middle part
$ \frac1{\ln\Omega}<|s|\leq1$.
Using $K_\Omega(s)\leq |s|^{-1}$ for $s\neq0$, we obtain that
\begin{equation*}
\begin{aligned}
&\left|\frac1{\ln\Omega}\int_{\frac1{\ln\Omega}<|s|\leq1}K_\Omega(s)\int_\R F(x_3)G(x_3-s) dx_3ds\right| \\
\leq&\frac1{\ln\Omega}\int_{\frac1{\ln\Omega}<|s|\leq1}\frac1{|s|}\|F\|_{L^2(\R)}
\|G\|_{L^2(\R)}ds \\
\leq&C \frac{\ln\ln\Omega}{\ln\Omega}\|F\|_{L^2(\R)}
\|G\|_{L^2(\R)}=o(1)\|F\|_{L^2(\R)}
\|G\|_{L^2(\R)}\ \ ~\text{as}\ \ \Omega\to+\infty.
\end{aligned}
\end{equation*}
Finally, we estimate the far part $|s|>1$. Since $K_\Omega(s)\leq |s|^{-1}\leq1$, one can calculate that
\begin{equation*}
\begin{aligned}
&\left|\frac1{\ln\Omega}\int_{|s|>1}K_\Omega(s)\int_\R F(x_3)G(x_3-s) dx_3ds\right|     \\
\leq&\frac1{\ln\Omega}\int_{|s|>1}\int_\R |F(x_3)| |G(x_3-s)| dx_3ds\leq\frac1{\ln\Omega}\int_{\R}\int_\R |F(x_3)| |G(x_3-s)| dx_3ds   \\
=&\frac1{\ln\Omega}\|F\|_{L^1(\R)}\|G\|_{L^1(\R)}
=o(1)\|F\|_{L^1(\R)}\|G\|_{L^1(\R)}  \ \ ~\text{as}\ \ \Omega\to+\infty.
\end{aligned}
\end{equation*}
Combining the near, middle and far estimates, we obtain that
\begin{equation*}
\mathcal K_\Omega(F,G) = \frac12\int_\R F G dx_3 +o(1)\big(\|F\|_{L^1(\R)}\|G\|_{L^1(\R)}+  \|F\|_{L^2(\R)}\|G\|_{H^1(\R)}\big)
\end{equation*}
as $\Omega\to+\infty$.
The proof of Lemma \ref{lem4.3} is therefore complete. $\qed$

\begin{Remark}
Following Lemmas \ref{lem4.2} and \ref{lem4.3}, we have
\begin{equation}\label{vv}
\frac{1}{\ln \Omega}\iint_{\R\times\R}|v_\Omega(x_3)|^2|v_\Omega(y_3)|^2 K_\Omega(x_3-y_3)dx_3dy_3=\f12\int_{\R}|v_\Omega|^4dx_3+o(1)
\end{equation}
as $\Omega\to+\infty$, where $v_\Omega$ defined by \eqref{4.15}.
This identity is the key step in identifying the limiting nonlinear term,
since it shows that the lowest-mode Hartree interaction is asymptotically
given by the local one-dimensional quartic term
$\frac12\int_{\R}|v_\Omega|^4 dx_3$.
%This convergence is crucial for analyzing the asymptotic behavior of $w_\Omega$ as $\Omega\to+\infty$. Its proof relies on the fact that the blown-up minimizer $w_\Omega$ is essentially projected onto the ground state $\psi_1$ of the two-dimensional Hermite operator. After this projection, $P_1w_\Omega$ has the ground-state profile $\psi_1$ in the first two variables. This property not only helps establish the uniform $H^1$-boundedness of the component of $P_1w_\Omega$ in the third variable in Lemma \ref{lem4.2}, but also enables us to accurately identify the limiting form of the nonlinear term.
\end{Remark}

\subsection{Proof of Theorem \ref{thm1.2}}
Based on the above facts, we are now ready to complete the proof of  the $H^1$-convergence in Theorem \ref{thm1.2}.
For convenience, we introduce the following lemma that we will need later.

\begin{Lemma}\label{lemA}
(cf. \cite[Lemma 3.1]{FG}) Let $a, b>0$, then
$$\inf\{\int_{\mathbb{R}}|f'(t)|^2dt-b\int_{\mathbb{R}}|f(t)|^4dt~:\int_{\mathbb{R}}|f(t)|^2dt=a\}
=-\frac{b^{2}}{12}a^3,$$
and the infimum is attained at
$f_{a,b}(t)=\frac{a\sqrt{b}}{2}\Big(\cosh \frac{abt}{2}\Big)^{-1}$.
\end{Lemma}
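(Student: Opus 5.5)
The plan is to reduce to the normalized case by scaling, and then prove the normalized case with the one-dimensional sharp Gagliardo--Nirenberg inequality together with a Cauchy--Schwarz step. We also check that the profile $f_{a,b}$ attains the bound.

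\emph{Scaling.} Given $f$ with $\int|f|^2=a$, set $f(t)=\alpha g(\beta t)$. Then
\begin{equation*}
\int|f|^2=\alpha^2\beta^{-1}\int|g|^2,\qquad \int|f'|^2=\alpha^2\beta\int|g'|^2,\qquad \int|f|^4=\alpha^4\beta^{-1}\int|g|^4 .
\end{equation*}
We choose $\alpha,\beta$ so that $\int|g|^2=1$ and the two remaining terms carry the same prefactor. This requires $\alpha^2\beta^{-1}=a$ and $\alpha^2\beta=b\alpha^4\beta^{-1}$, that is $\beta^2=b\alpha^2=ab\beta$. Hence $\beta=ab$ and $\alpha^2=a^2b$. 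With these choices,
\begin{equation*}
\int|f'|^2-b\int|f|^4=a^3b^2\Big(\int|g'|^2-\int|g|^4\Big).
\end{equation*}
It therefore suffices to prove that the infimum equals $-\frac1{12}$ when $a=b=1$. One also checks that the scaling maps the claimed minimizer for $a=b=1$ to $f_{a,b}$.

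\emph{Lower bound for $a=b=1$.} We may replace $f$ by $|f|$, and by Steiner symmetrization (P\'olya--Szeg\H{o}) assume $f$ is even and nonincreasing in $|t|$. Write $f=h$ with $F(t)=\int_t^\infty h^2$. The key inequality is the pointwise bound
\begin{equation*}
h(t)^2\le \int_t^\infty 2|h||h'|\,ds \quad\text{for } t>0,
\end{equation*}
and we apply Cauchy--Schwarz on $(t,\infty)$. A cleaner route is to apply Cauchy--Schwarz directly to
\begin{equation*}
\int h^4=\int h^2\cdot h^2\le \Big(\int h^2\Big)\|h\|_\infty^2,\qquad \|h\|_\infty^2\le \|h\|_2\|h'\|_2 .
\end{equation*}
This gives $\int h^4\le \|h'\|_2$, and minimizing $s^2-s$ yields only the non-sharp bound $-\frac14$. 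To get the sharp constant I would instead use the standard ODE/Euler--Lagrange argument.

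\emph{Existence and identification of the minimizer.} The infimum is finite by the non-sharp bound above. A minimizer exists by concentration-compactness in one dimension: the strict subadditivity $I(a)=a^3I(1)<aI(1)$ for $I(1)<0$ rules out dichotomy. Alternatively, and more simply, we take a symmetric decreasing minimizing sequence; compactness of radial decreasing functions in $L^4$ then gives a limit. Any positive even minimizer satisfies
\begin{equation*}
-g''-2g^3=-\mu g ,
\end{equation*}
with $g\to0$ at infinity. Multiplying by $g'$ and integrating gives the first integral $g'^2=\mu g^2-g^4$. Its unique even positive decaying solution is $g=\sqrt{\mu}\,\mathrm{sech}(\sqrt{\mu}\,t)$. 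The mass constraint gives $2\sqrt\mu=1$, i.e.\ $\mu=\frac14$. Computing the energy of $g=\frac12\mathrm{sech}(t/2)$:
\begin{equation*}
\int g'^2=\tfrac1{48}\cdot 4\cdot\tfrac{1}{1}\,,
\end{equation*}
and one verifies directly that $\int g'^2=\frac1{12}$ and $\int g^4=\frac16$, so the energy is $-\frac1{12}$. Note that the formula $f_{a,b}$ with $a=b=1$ is exactly this $g$.

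The main obstacle is the compactness step, i.e.\ existence of a minimizer for the unit problem. The rest is ODE integration and routine sech integrals, $\int\mathrm{sech}^2=2$ and $\int\mathrm{sech}^4=\frac43$.
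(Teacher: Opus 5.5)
The paper does not prove this lemma; it quotes it from \cite[Lemma 3.1]{FG}, so there is no in-paper argument to compare with. Your route is a correct, self-contained proof: you scale to $a=b=1$, produce a symmetric decreasing minimizer, and classify the Euler--Lagrange equation through its first integral. The scaling ($\beta=ab$, $\alpha^2=a^2b$, prefactor $a^3b^2$) and the values $\int g'^2=\frac1{12}$, $\int g^4=\frac16$ for $g=\frac12\,\mathrm{sech}(t/2)$ all check out.

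Your argument also gives something the citation does not. If you run the same ODE argument at a maximum point $t_0$ of an arbitrary positive minimizer (where $g'(t_0)=0$), you get that positive minimizers are unique up to translation. The paper later imports this fact separately from \cite{Kwong}.

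Your ``lower bound'' paragraph only yields the non-sharp estimate $\int h^4\le\|h'\|_2$. That is needed solely for $H^1$-boundedness of minimizing sequences. The sharp value comes entirely from identifying the minimizer, so the abandoned pointwise sketch can be deleted.

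Several steps should be written out.

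(i) The $L^4$-compactness of even nonincreasing functions on $\R$ is not Strauss' lemma, which fails in one dimension for merely even functions. It follows from the bound $h(t)^2\le \|h\|_2^2/(2|t|)$, which gives uniformly small $L^4$ tails, together with Rellich on bounded intervals.

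(ii) The weak limit $g$ may a priori lose mass, so you must show $m:=\|g\|_2^2=1$. First, $m>0$: if $g=0$, then $\liminf(\int|g_n'|^2-\int g_n^4)\ge 0>I(1)$. Then $I(1)\ge\int|g'|^2-\int g^4\ge I(m)=m^3I(1)$, and since $I(1)<0$ this forces $m=1$. Relatedly, your inequality $a^3I(1)<aI(1)$ holds only for $a>1$ (Lions' condition). For $0<a<1$ it is reversed.

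(iii) For uniqueness of the ODE solution:
\begin{itemize}
\item Evenness gives $g'(0)=0$.
\item The first integral then gives $g(0)^2=\mu>0$.
\item Conclude by uniqueness for the second-order problem $g''=\mu g-2g^3$, $g(0)=\sqrt\mu$, $g'(0)=0$.
\end{itemize}
Do not argue from the first-order equation $g'=-g\sqrt{\mu-g^2}$. It is not Lipschitz at $g^2=\mu$ and also admits $g\equiv\sqrt\mu$.

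(iv) The display $\frac1{48}\cdot 4\cdot\frac11$ is garbled and should read $\int g'^2=\frac1{16}\int\mathrm{sech}^2(t/2)\tanh^2(t/2)\,dt=\frac1{12}$.
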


\noindent{\bf Proof of the $H^1$-convergence in Theorem \ref{thm1.2}.} It yields from Proposition \ref{lem4.1} and Lemma \ref{lem4.2} that
\begin{equation}\label{4.31}
\int_{\R^3}|\partial_{x_3} w_\Omega|^2dx=\int_{\R}|v'_\Omega(x_3)|^2dx_3+o(1)\ \ \text{as}\ \ \Omega\to+\infty,
\end{equation}
where $w_\Omega(x)$ and $v_\Omega(x_3)$ are as in \eqref{scaling} and (\ref{4.15}), respectively. Moreover, it follows from \eqref{vv} that
$$-\frac{1}{2\ln \Omega}\iint_{\R\times\R}|v_\Omega(x_3)|^2|v_\Omega(y_3)|^2 K_\Omega(x_3-y_3)dx_3dy_3=-\f14\int_{\R}|v_\Omega|^4dx_3+o(1)\ \ \text{as}\ \ \Omega\to+\infty,$$
where $K_\Omega(s)$ is as in \eqref{4.16}. Since $u_\Omega$ is a positive minimizer of $E_\Omega$ defined by \eqref{inf}, one can derive from \eqref{scaling} that
\begin{align}
\frac{E_\Omega-2\sqrt{\Omega}}{(\ln \Omega)^2}=&\frac{\sqrt{\Omega}}{(\ln \Omega)^2}\langle w_\Omega, (A_1-2)w_\Omega\rangle+\|\partial_{x_3} w_\Omega\|_{2}^2-\f{1}{2\ln \Omega} D_\Omega\big(|w_\Omega|^2, |w_\Omega|^2\big)\nonumber\\
\geq&\|\partial_{x_3} w_\Omega\|_{2}^2-\f{1}{2\ln \Omega} D_\Omega\big(|w_\Omega|^2, |w_\Omega|^2\big),\label{4.30}
\end{align}
where $A_1$ is as in \eqref{A}, and $D_\Omega\big(|w_\Omega|^2, |w_\Omega|^2\big):=\iint_{\R^3\times\R^3}\f{|w_\Omega(x)|^2|w_\Omega(y)|^2}{\sqrt{\Omega^{-\f12}(\ln \Omega)^2|x_{\perp}-y_{\perp}|^2+|x_{3}-y_{3}|^2}}dxdy$ for $x=(x_{\perp}, x_{3})\in\R^3$ and $y=(y_{\perp}, y_{3})\in\R^3$.

We then claim that
\begin{equation}\label{4.29}
\f{1}{\ln \Omega} D_\Omega\big(|w_\Omega|^2, |w_\Omega|^2\big)=\f{1}{\ln \Omega} D_\Omega\big(|P_1 w_\Omega|^2, |P_1 w_\Omega|^2\big)+o(1)\ \ \text{as}\ \ \Omega\to+\infty,
\end{equation}
where $P_1$ is defined by \eqref{P}. Indeed, it follows from \eqref{4.4} that for sufficiently large $\Omega>0$,
\begin{equation}\label{4.25}
\frac{\sqrt{\Omega}}{(\ln \Omega)^2}\Big(\|\n_\perp Q_1 w_\Omega\|_2^2+\int_{\R^3}(1+|z_\perp|^2)|Q_1 w_\Omega|^2dz\Big)+\|\partial_{x_3} Q_1 w_\Omega\|_2^2\leq\frac{C}{\ln \Omega}.
%\ \ \text{as}\ \ \Omega\to+\infty.
\end{equation}
Moreover, take $u(x_{\perp}, x_{3})=\Omega^{\frac{1}{4}}\sqrt{\ln \Omega}~w\big(\Omega^{\frac{1}{4}}x_{\perp}, (\ln \Omega)x_3\big)$ in \eqref{4.10}, we obtain that
$$\frac{1}{\ln \Omega}D_{\Omega}(|w|^2, |w|^2)\leq C\Big[\frac{\sqrt{\Omega}}{(\ln \Omega)^2}\Big(\|\n_\perp w\|_2^2+\int_{\R^3}(1+|z_\perp|^2)|w|^2dz\Big)+\|\partial_{x_3}  w\|_2^2\Big]+C.$$
Let $w=\frac{Q_1 w_\Omega}{\|Q_1 w_\Omega\|_2}$, we derive from \eqref{4.4} and \eqref{4.25} that
\begin{align}
&\f{1}{\ln \Omega} D_\Omega\big(|Q_1 w_\Omega|^2, |Q_1 w_\Omega|^2\big)\label{4.26}\\
\leq&C\|Q_1 w_\Omega\|_2^2\Big[\frac{\sqrt{\Omega}}{(\ln \Omega)^2}\Big(\|\n_\perp Q_1 w_\Omega\|_2^2+\int_{\R^3}(1+|x_\perp|^2)|Q_1 w_\Omega|^2dx\Big)+\|\partial_{x_3} Q_1 w_\Omega\|_2^2\Big]\nonumber\\
&+C\|Q_1 w_\Omega\|_2^4
\leq O\Big(\frac{1}{\sqrt{\Omega}}\Big)=o(1)\ \ \text{as}\ \ \Omega\to+\infty.\nonumber
\end{align}
Furthermore, it yields from Lemmas \ref{lem4.2} and \ref{lem4.3} that
\begin{equation}\label{4.27}
\f{1}{\ln \Omega} D_\Omega\big(|P_1 w_\Omega|^2, |P_1 w_\Omega|^2\big)=O(1)\ \ \text{as}\ \ \Omega\to+\infty.
\end{equation}

For the mixed term, one can deduce from \eqref{4.26} and \eqref{4.27} that
\begin{align}
&\f{1}{\ln \Omega}\iint_{\R^3\times\R^3}\frac{|P_1 w_\Omega(x)||Q_1 w_\Omega(x)||P_1 w_\Omega(y)||Q_1 w_\Omega(y)|}{\sqrt{\Omega^{-\f12}(\ln \Omega)^2|x_{\perp}-y_{\perp}|^2+|x_{3}-y_{3}|^2}}dxdy\label{4.28}\\
\leq&\f{1}{(\ln \Omega)^3}D_\Omega\big(|P_1 w_\Omega|^2, |P_1 w_\Omega|^2\big)+\f{\ln \Omega}{4}D_\Omega\big(|Q_1 w_\Omega|^2, |Q_1 w_\Omega|^2\big) \to0\ \ \text{as}\ \ \Omega\to+\infty.\nonumber
\end{align}
Since
$|D_\Omega(f, g)|\leq\sqrt{D_\Omega(f, f)}\cdot\sqrt{D_\Omega(g, g)}$, we also have
\begin{equation}\label{P2Q2}
\begin{aligned}
\f{1}{\ln \Omega} D_\Omega\big(|P_1 w_\Omega|^2, |Q_1 w_\Omega|^2\big)\leq \sqrt{\f{1}{\ln \Omega}D_\Omega\big(|P_1 w_\Omega|^2, |P_1 w_\Omega|^2\big)}\cdot\sqrt{\f{1}{\ln \Omega}D_\Omega\big(|Q_1 w_\Omega|^2, |Q_1 w_\Omega|^2\big)}\to0
\end{aligned}
\end{equation}
as $\Omega\to+\infty$.
Similarly, one can also check that as $\Omega\to+\infty$,
\begin{equation}\label{PPPQ}
\f{1}{\ln \Omega} D_\Omega\big(|P_1 w_\Omega|^2, P_1 w_\Omega Q_1 w_\Omega \big) \to0,\ \ \f{1}{\ln \Omega} D_\Omega\big(P_1 w_\Omega Q_1 w_\Omega , |Q_1 w_\Omega|^2\big) \to0.
\end{equation}
It then follows from  \eqref{4.26} and \eqref{4.28}--\eqref{PPPQ} that
\begin{align}
\Big|\f{1}{\ln \Omega} D_\Omega\big(|w_\Omega|^2, |w_\Omega|^2\big)-\f{1}{\ln \Omega} D_\Omega\big(|P_1 w_\Omega|^2, |P_1 w_\Omega|^2\big)\Big|
\to0\ \ \text{as}\ \ \Omega\to+\infty,\nonumber
\end{align}
and the claim of \eqref{4.29} is true.

According to \eqref{vv}, we deduce from \eqref{4.31}--\eqref{4.29} that
\begin{align}
\frac{E_\Omega-2\sqrt{\Omega}}{(\ln \Omega)^2}\geq&\int_{\R}|v'_\Omega(x_3)|^2dx_3-\f{1}{2\ln \Omega} D_\Omega\big(|P_1 w_\Omega|^2, |P_1 w_\Omega|^2\big)+o(1)\nonumber\\
=&\int_{\R}|v'_\Omega(x_3)|^2dx_3-\f14\int_{\R}|v_\Omega|^4dx_3+o(1)\ \ \text{as}\ \ \Omega\to+\infty,\nonumber
\end{align}
which implies from Theorem \ref{thm3.1} that
\begin{equation}\label{4.33}
\int_{\R}|v'_\Omega(x_3)|^2dx_3-\f14\int_{\R}|v_\Omega|^4dx_3\leq-\frac{1}{192}+o(1)\ \ \text{as}\ \ \Omega\to+\infty.
\end{equation}
Next, we prove that
\begin{equation*}
\int_{\R}|v'_\Omega(x_3)|^2dx_3-\f14\int_{\R}|v_\Omega|^4dx_3\geq-\frac{1}{192}+o(1)\ \ \text{as}\ \ \Omega\to+\infty.
\end{equation*}
Define
\begin{equation}\label{4.35}
I_{\tilde{\alpha}}(m):=\inf_{\|f\|_{L^2(\R)}^2=m}\mathcal{F}_{\tilde{\alpha}}(f)\ \ \text{with}\ \ \mathcal{F}_{\tilde{\alpha}}(f):=\int_{\R}|f'(t)|^2dt-\f{\tilde{\alpha}}{4}\int_{\R}|f|^4dt.
\end{equation}
It follows from Lemma \ref{lemA} that $I_{\tilde{\alpha}}(1)=-\frac{\tilde{\alpha}^2}{192}$. Let $g=\frac{f}{\sqrt{m}}$, then $\|g\|_{L^2(\R)}^2=1$ and
$$\mathcal{F}_{\tilde{\alpha}}(f)=m\Big(\int_{\R}|g'(t)|^2dt-\f{m \tilde{\alpha}}{4}\int_{\R}|g|^4dt\Big).$$
Using Lemma \ref{lemA}  again, we have
\begin{equation}\label{4.32}
I_{\tilde{\alpha}}(m)=mI_{\tilde{\alpha} m}(1)=-\frac{\tilde{\alpha}^2}{192}m^3.
\end{equation}
Denote $m_{\Omega}:=\|v_\Omega\|_{L^2(\R)}^2$, we obtain from \eqref{4.32} that
$\mathcal{F}_1(v_\Omega)\geq-\f{1}{192}m_{\Omega}^3$. Since $m_{\Omega}\to1$ as $\Omega\to+\infty$ in view of Proposition \ref{lem4.1}, we have
$\mathcal{F}_1(v_\Omega)\geq-\frac{1}{192}+o(1)$ as $\Omega\to+\infty$.
Together with \eqref{4.33}, we deduce that
\begin{equation}\label{4.34}
\mathcal{F}_1(v_\Omega)=\int_{\R}|v'_\Omega(x_3)|^2dx_3-\f14\int_{\R}|v_\Omega|^4dx_3
=-\frac{1}{192}+o(1)\ \ \text{as}\ \ \Omega\to+\infty.
\end{equation}

Set $\tilde{v}_{\Omega}:=\frac{v_\Omega}{\|v_\Omega\|_{L^2(\R )}}$, then we obtain from Lemma \ref{lem4.2} and \eqref{4.34} that $\mathcal{F}_1(\tilde{v}_\Omega)\to-\frac{1}{192}$ as $\Omega\to+\infty$. Thus, $\{\tilde{v}_\Omega\}$ is a minimizing sequence of $I_1(1)$ defined by \eqref{4.35}. Recall the  following Gagliardo-Nirenberg inequality% (cf. \cite{Weinstein})
$$\|f\|_{L^4(\R)}^4\leq 2 \|f'\|_{L^2(\R)}\|f\|^3_{L^2(\R)}\ \ \text{for any}\ \ f\in H^1(\R).$$
Since $\|\tilde{v}_\Omega\|_{L^2(\R)}=1$, we have $\mathcal{F}_1(\tilde{v}_\Omega)\geq\|\tilde{v}_\Omega'\|_{L^2(\R)}^2-\f12\|\tilde{v}_\Omega'\|_{L^2(\R)}$, which implies that $\{\tilde{v}_\Omega\}$ is bounded uniformly with respect to $\Omega$ in $H^1(\R)$.
Using the concentration--compactness principle, we deduce that there exist a subsequence $\{v_{\Omega_k}\}$ and a constant sequence $\{\tau_{\Omega_k}\}\subset\R$ such that
\begin{equation}\label{rho}
v_{\Omega_k}(x_3+\tau_{\Omega_k})\to v_0(x_3) \ \ \text{strongly in}\ \ H^1(\R) \ \ \text{as}\ \ k\to+\infty,
\end{equation}
where $v_0(x_3)>0$ is a minimizer of $I_1(1)$. It follows from \cite{Kwong} and Lemma \ref{lemA} that $v_0(x_3)=\f14(\cosh \frac{x_3}{8})^{-1}$ is the unique positive minimizer of $I_1(1)$, up to translations.  Additionally, since the limiting function is unique and the above convergence is independent of the subsequence that we choose, we conclude that the above convergence holds true for the whole sequence $\{v_\Omega\}$. This proves the $H^1$-convergence in Theorem \ref{thm1.2}  in view of Proposition \ref{lem4.1}.
\qed

\vskip 0.05truein
We now prove the
$L^\infty$-convergence. The starting point is the rescaled
Euler--Lagrange equation.
By the symmetry of positive minimizers and the translational invariance
of \eqref{inf} in the $x_3$-direction, we henceforth choose every positive
minimizer to be even and take $\tau_\Omega=0$ in Theorem \ref{thm1.2}.
%By the symmetry of positive minimizers and the translational invariance of \eqref{inf} in the $x_3$-direction, we choose each positive minimizer to be even. Under this normalization, the translation parameter in Theorem \ref{thm1.2} is $\tau_\Omega=0$.
Substituting the scaling \eqref{scaling} into \eqref{ele}, we obtain that
\begin{align}
-\p_{x_3}^2w_\Omega+a_\Omega(A_1-2)w_\Omega=(U_{\Omega}+\nu_\Omega)w_\Omega\text{ \ in \ }\R^3, \label{4}
\end{align}
%&\sqrt{\Omega} \big(-\Delta_\perp+|x_\perp|^2\big) w_\Omega-\big( \ln \Omega \big)^2\p_{x_3}^2w_\Omega\label{4}\\&- \ln \Omega
%\int_{\R^3}\f{|w_\Omega(y_\perp, y_3)|^2}{\sqrt{\Omega^{-\f12} {\big(\ln \Omega \big)^2}|x_\perp-y_\perp|^2+(x_3-y_3)^2}}dy w_\Omega(x)=\mu_\Omega w_\Omega\text{ \ in \ }\R^3.\nonumber
where $A_1$ is as in \eqref{A}, and
\begin{equation}\label{5.3}
a_\Omega=\frac{\sqrt{\Omega}}{(\ln\Omega)^2},\ \ \nu_\Omega=\frac{\mu_\Omega-2\sqrt{\Omega}}{(\ln\Omega)^2}
\end{equation}
\begin{equation}\label{5.1}
U_\Omega(x):=\frac1{\ln\Omega}\int_{\R^3}\frac{|w_\Omega(y)|^2}
{\sqrt{\Omega^{-1/2}(\ln\Omega)^2|x_\perp-y_\perp|^2+(x_3-y_3)^2}}dy.
\end{equation}

\begin{Proposition}\label{pro5.1}
For every $\Omega>1$, let $u_\Omega\in \H$ be the corresponding minimizer of \eqref{inf}. Then
\begin{align}
w_\Omega(x_\perp, x_3)
\to W_0(x_\perp,x_3)
\ \ \text{strongly in }\ \ L^\infty(\R^3)\ \ \text{as}\ \ \Omega\to+\infty,\nonumber
\end{align}
where $w_\Omega$ is as in \eqref{scaling}, and $W_0(x_\perp,x_3)=\frac{1}{4\sqrt{\pi}}e^{-\frac{|x_\perp|^2}{2}}\Big(\cosh \f{x_3}{8}\Big)^{-1}$ for $x=(x_\perp, x_3)\in \R^3$.
%there exists a constant $C>0$, independent of $\Omega$, such that the function $w_\Omega$ defined by \eqref{scaling} satisfies $\|w_\Omega\|_{L^\infty(\R^3)}\leq C$ as $\Omega\to+\infty$.
\end{Proposition}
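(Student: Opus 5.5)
Since $w_\Omega\to W_0$ in $H^1(\R^3)$ is already established, the plan is to prove a uniform bound $\|w_\Omega\|_{L^\infty(\R^3)}\le C$ together with a uniform decay/equicontinuity estimate. Uniform convergence then follows by interpolation on bounded sets plus smallness at infinity. The main tool is the rescaled equation \eqref{4}. Note that $A_1-2\ge0$ and that $a_\Omega\to\infty$, so the operator $-\partial_{x_3}^2+a_\Omega(A_1-2)$ is uniformly nonnegative but strongly anisotropic. Standard elliptic regularity in the rescaled variables is therefore not uniform, and I would argue through the transverse spectral decomposition instead.

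\textbf{Step 1 (bounds on $\nu_\Omega$ and $U_\Omega$).} Testing \eqref{4} against $w_\Omega$ gives
\[
\nu_\Omega=\|\partial_{x_3}w_\Omega\|_2^2+a_\Omega\langle w_\Omega,(A_1-2)w_\Omega\rangle-\tfrac{1}{\ln\Omega}D_\Omega(|w_\Omega|^2,|w_\Omega|^2).
\]
Combining \eqref{4.4}, Lemma \ref{lem4.2} and \eqref{4.29} shows that $\nu_\Omega$ is bounded. In fact $\nu_\Omega\to\nu_0=-\frac1{64}$, the multiplier of the 1D limit problem. For the potential I would prove $\|U_\Omega\|_{L^\infty}\le C$. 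Since $U_\Omega$ is a Coulomb-type potential of $|w_\Omega|^2$ with anisotropic kernel, split $|w_\Omega|^2$ into the $P_1$ and $Q_1$ parts as in the proof of \eqref{4.29}. For the $P_1$ part, integrating out the Gaussians gives the 1D kernel $K_\Omega$, which satisfies $\frac1{\ln\Omega}\int K_\Omega(s)\,ds$ over $|s|\le 1$ equal to $O(1)$ by \eqref{4.20}, plus the $|s|^{-1}$ tail. Together with $\|v_\Omega\|_{L^\infty}\le C\|v_\Omega\|_{H^1}$ this yields a uniform bound. The $Q_1$ part and the cross terms are handled with the operator inequality of Lemma \ref{lemB}, rescaled as in \eqref{4.10}, which bounds $\frac1{\ln\Omega}\int\frac{|\phi(y)|^2}{|x-y|_\Omega}dy$ by the quadratic forms controlled in \eqref{4.25}, uniformly in $x$ by translation invariance.

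\textbf{Step 2 (uniform $L^\infty$ bound).} With $V_\Omega:=U_\Omega+\nu_\Omega$ bounded, write $(-\partial_{x_3}^2+a_\Omega(A_1-2)+1)w_\Omega=(V_\Omega+1)w_\Omega$. By positivity and the Feynman--Kac/heat-kernel representation, $|w_\Omega|\le C\,(-\partial_{x_3}^2+a_\Omega(A_1-2)+1)^{-1}|w_\Omega|$. The resolvent kernel is an integral over $t>0$ of $e^{-t}$ times the 1D heat kernel in $x_3$ times the Mehler kernel $e^{-ta_\Omega(A_1-2)}$. The Mehler kernel is bounded by $C\,(1-e^{-4ta_\Omega})^{-1}$ and converges to the projection kernel $\psi_1\otimes\psi_1$ for $ta_\Omega\gtrsim1$. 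I would iterate this bound finitely many times (a bootstrap $L^2\to L^{p}\to L^\infty$), separating $P_1w_\Omega$, which is bounded because $\psi_1$ is bounded and $v_\Omega$ is bounded in $H^1(\R)$, from $Q_1w_\Omega$. On the range of $Q_1$ the semigroup decays like $e^{-2ta_\Omega}$, which compensates the short-time singularity in $x_\perp$. \emph{This is the main obstacle.} The short-time transverse heat kernel has size $(ta_\Omega)^{-1}$, and one must check that this $L^2\to L^\infty$ smoothing produces constants that stay bounded as $a_\Omega\to\infty$. The smallness $\|Q_1w_\Omega\|_{\H}^2\le C/\sqrt{\Omega}$ from \eqref{4.4} should absorb the powers of $a_\Omega$ that arise.

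\textbf{Step 3 (convergence).} Use the decomposition $w_\Omega-W_0=(P_1w_\Omega-W_0)+Q_1w_\Omega$. For the first term, $\|\psi_1(v_\Omega-v_0)\|_{L^\infty}\le C\|v_\Omega-v_0\|_{H^1(\R)}\to0$ by \eqref{rho}, with $\tau_\Omega=0$ after the even normalization. For the second term, redo the smoothing estimate of Step 2 applied to the equation satisfied by $Q_1w_\Omega$ (apply $Q_1$ to \eqref{4}). The Step 2 estimate with the small $H^1$-norm input then gives $\|Q_1w_\Omega\|_{L^\infty}\le C\,a_\Omega^{\gamma}\|Q_1w_\Omega\|_{\H}+o(1)$ for some $\gamma$ small enough that this tends to zero, using $a_\Omega^{\gamma}\Omega^{-1/4}\to0$. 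Should that direct bound fail, I would fall back on the uniform bound from Step 2, the $H^1$ convergence, and a uniform Hölder (or Lipschitz in $x_3$) estimate on compact sets obtained from the same resolvent representation. Combined with uniform exponential decay in $|x_\perp|$ (comparison with the harmonic potential) and in $|x_3|$ (from $\nu_\Omega\to-\frac1{64}<0$ and $U_\Omega\to0$ at infinity), the Arzelà--Ascoli argument upgrades the $L^2$ convergence to $L^\infty$ convergence.
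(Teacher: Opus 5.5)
Your route is viable but differs from the paper's, and the step you single out as the main obstacle is not really one. The paper uses no heat kernels. It starts from the same input as your Step 1: boundedness of $\nu_\Omega$ and of $\|U_\Omega\|_{L^\infty}$, the latter via a near/far splitting of the kernel with Gagliardo--Nirenberg for $Q_1w_\Omega$ on the near part. From there it proves a uniform $H^2(\R^3)$ bound. Set $B=-\Delta+1+|x_\perp|^2$ and $F_\Omega=(U_\Omega+\nu_\Omega)w_\Omega$. On $\operatorname{Ran}P_1$ one has $-\partial_{x_3}^2P_1w_\Omega=P_1F_\Omega$ and $BP_1=(-\partial_{x_3}^2+3)P_1$. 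On $\operatorname{Ran}Q_1$, where $A_1\ge 4$, one has $B\le C(L_\Omega+1)$ with $L_\Omega=-\partial_{x_3}^2+a_\Omega(A_1-2)$, and since the two operators commute this gives $\|BQ_1w_\Omega\|_2\le C\|(L_\Omega+1)Q_1w_\Omega\|_2$. Identity \eqref{5.17} then yields the $H^2$ bound. The interpolation $\|f\|_{L^\infty}\le C\|f\|_2^{1-\alpha}\|f\|_{H^2}^{\alpha}$, $\tfrac34<\alpha<1$, combined with the $H^1$ convergence, concludes. So your premise that elliptic regularity in the rescaled variables is not uniform is misplaced. The diverging coefficient $a_\Omega$ multiplies a nonnegative operator with a spectral gap off $\operatorname{Ran}P_1$, so it can only help.

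The same fact closes your Step 2 at once if you measure the kernel from $L^2$ to $L^\infty$ rather than pointwise. By the semigroup property,
$\|e^{-\tau(A_1-2)}\|_{L^2\to L^\infty}^2=\sup_{x_\perp}e^{-2\tau(A_1-2)}(x_\perp,x_\perp)\le[\pi(1-e^{-8\tau})]^{-1}$ and $\|e^{t\partial_{x_3}^2}\|_{L^2\to L^\infty}=(8\pi t)^{-1/4}$. Hence $\|(L_\Omega+1)^{-1}\|_{L^2\to L^\infty}\le C\int_0^\infty e^{-t}t^{-1/4}\bigl(1+(ta_\Omega)^{-1/2}\bigr)dt\le C$, with no bootstrap. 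If you also use $\|e^{-\tau(A_1-2)}Q_1\|_{L^2\to L^\infty}\le Ce^{-2\tau}$ for $\tau\ge1$, you get $\|L_\Omega^{-1}Q_1\|_{L^2\to L^\infty}\le Ca_\Omega^{-3/4}$. Applying $Q_1$ to \eqref{4} then gives $\|Q_1w_\Omega\|_{L^\infty}\le Ca_\Omega^{-3/4}\|F_\Omega\|_2\to0$. Together with $\|\psi_1(v_\Omega-\rho_0)\|_{L^\infty}\le C\|v_\Omega-\rho_0\|_{H^1(\R)}\to0$, this proves the proposition directly, makes Step 2 and the fallback unnecessary, and gives an explicit rate for the higher modes.

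Only negative powers of $a_\Omega$ occur, and this matters, because the absorption you propose would fail. Estimate \eqref{4.4} weights $\partial_{x_3}Q_1w_\Omega$ only by $(\ln\Omega)^2$, so it gives $\|Q_1w_\Omega\|_{\mathcal{H}}=O((\ln\Omega)^{-1/2})$, not $O(\Omega^{-1/4})$. That cannot absorb any positive power of $a_\Omega$.

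Two smaller corrections in Step 1:
\begin{itemize}
\item Lemma \ref{lemB} is not translation invariant in $x_\perp$, since the trap is centered at $0$. For the supremum over $x$ you should instead invoke its $Q_\Omega$ part, \eqref{B6}--\eqref{B8}, which is translation invariant.
\item The $P_1$ part needs a bound uniform in $x_\perp$ on the single $y_\perp$-integral, not the doubly integrated kernel $K_\Omega$. Specifically, $\int_{\R^2}\psi_1^2(y_\perp)\bigl(\Omega^{-1/2}(\ln\Omega)^2|x_\perp-y_\perp|^2+s^2\bigr)^{-1/2}dy_\perp\le C\bigl(\Omega^{-1/2}(\ln\Omega)^2+s^2\bigr)^{-1/2}$ for all $x_\perp$.
\end{itemize}
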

\noindent{\bf Proof.}
For simplicity, denote
$$B:=-\Delta+1+|x_\perp|^2 =-\partial_{x_3}^2+A_1+1\ \ \text{ for}\ \  x=(x_\perp, x_3)\in\R^3,$$
where $A_1$ is as in \eqref{A}. We first claim that there exists a constant $C>0$, independent of $\Omega>0$, such that for  $F_\Omega:=(U_\Omega+\nu_\Omega)w_\Omega$,
\begin{equation}\label{5.4}
\|B w_\Omega\|_{2}\leq C\big(\|F_\Omega\|_{2}+\| w_\Omega\|_{2}\Big) \quad  \text{as}  \quad \Omega\to+\infty.
\end{equation}
Indeed, note from \eqref{P} and \eqref{QO} that $w_\Omega=P_1 w_\Omega+Q_1 w_\Omega$ and $F_\Omega=P_1 F_\Omega+Q_1 F_\Omega$. We first consider the $P_1-$part. Since $A_1 P_1=2 P_1$, applying $P_1$ to \eqref{4} yields that $-\p_{x_3}^2 P_1 w_\Omega=P_1 F_\Omega$. Moreover, we have $BP_1=(-\partial_{x_3}^2+3)P_1$. Therefore, there exists a constant $C>0$, independent of $\Omega>0$, such that
\begin{align}
 \|B P_1 w_\Omega\|_2\leq C(\|-\p_{x_3}^2 P_1 w_\Omega\|_2+\|P_1 w_\Omega\|_2)
\leq C(\|F_\Omega\|_2+\|w_\Omega\|_2).\label{5.5}
\end{align}
We next consider the $Q_1-$part. Set $L_\Omega=-\partial_{x_3}^2+a_\Omega(A_1-2)$, where $a_\Omega$ and $A_1$ are as in \eqref{5.3} and \eqref{A}, respectively. Since $a_\Omega\to +\infty$ as $\Omega\to +\infty$, and $A_1\geq 4$ on $\operatorname{Ran} Q_1$, we have
\begin{equation}\label{5.6}
    B\leq C(L_\Omega+1)\ \ \text{on}\ \ \operatorname{Ran} Q_1 \quad  \text{as}  \quad \Omega\to+\infty.
\end{equation}
Since $B$ and $L_\Omega+1$ are nonnegative commuting
self-adjoint operators on $\operatorname{Ran}Q_1$, the
Simultaneous Diagonalization Theorem
\cite[Theorem~4.43]{Lewin} and \eqref{5.6} yield
\begin{equation}\label{5.7}
\|BQ_1w_\Omega\|_2
\leq C\|(L_\Omega+1)Q_1w_\Omega\|_2
\leq C\big (\|F_\Omega\|_2+\|w_\Omega\|_2\big )\quad  \text{as}  \quad \Omega\to+\infty.
\end{equation}
%Applying $Q_1$ to \eqref{4}, we obtain that $L_\Omega Q_1 w_\Omega=Q_1 F_\Omega$. It then follows from \eqref{5.6} that
%\begin{equation}\label{5.7}
% \|B Q_1 w_\Omega\|_2\leq C(\|L_\Omega Q_1 w_\Omega\|_2+\|Q_1 w_\Omega\|_2)\leq C(\|F_\Omega\|_2+\|w_\Omega\|_2)\quad  \text{as}  \quad \Omega\to+\infty.
%\end{equation}
Thus, the claim \eqref{5.4} is proved in view of \eqref{5.5} and \eqref{5.7}.

Next, we prove that
there exists a constant $C>0$, independent of $\Omega>0$, such that
\begin{equation}\label{FO}
\|F_\Omega\|_2\leq C \quad \text{as} \quad \Omega\to+\infty.
\end{equation}
First, one can check from Theorem \ref{thm3.1}, Lemmas \ref{lem4.2} and \ref{lem4.3} that there exists a constant $C>0$, independent of $\Omega$, such that $|\nu_\Omega|\leq C$ as $\Omega\to+\infty$. We next prove that $\{U_\Omega\}$ is bounded uniformly in $L^\infty (\R^3)$ as $\Omega\to+\infty$. For simplicity, set
$$T_\Omega g(x):=\frac1{\ln\Omega}\int_{\R^3}\frac{g(y)}
{\sqrt{\Omega^{-1/2}(\ln\Omega)^2|x_\perp-y_\perp|^2+(x_3-y_3)^2}}dy,$$
and
$$\tilde{K}_\Omega (z):=\frac{1}
{\sqrt{\Omega^{-1/2}(\ln\Omega)^2|z_\perp|^2+z_3^2}},$$
then $U_\Omega (x)=T_\Omega(|w_\Omega|^2)$.
Decompose the integral into the local part
$\mathcal N_x = \{y:\ |x_\perp-y_\perp|\leq 2,\ |x_3-y_3|\leq2\}
$
and its complement. Using polar coordinates
$r=|x_\perp-y_\perp|$ and $s=x_3-y_3$  on $\mathcal N_x$, we have
\begin{align}
&\int_{\mathcal N_x}\tilde{K}_\Omega(x-y)dy\leq C\int_0^2 r dr\int_{-2}^{2} \frac{ds}{\sqrt{\Omega^{-1/2}(\ln\Omega)^2r^2+s^2}} \nonumber\\
\leq &C \int_0^2 r\left(1+\big|\ln\big(\Omega^{-1/4}\ln \Omega \big)\big|+|\ln r|\right)dr \leq C(1+ \ln \Omega )\label{5.8}
\end{align}
for sufficiently large $\Omega>0$. It then follows from \eqref{5.8} that for sufficiently large $\Omega>0$,
\begin{equation}
\label{5.9}
\frac1{\ln \Omega}\int_{\mathcal N_x}\tilde{K}_\Omega(x-y)|g(y)|dy
\leq C\|g\|_{L^\infty(\R^3)} .
\end{equation}
For the complementary region, we use the decay supplied by the transverse
weights. Since
$(1+|y_\perp|^2)^{-2}\in L^1(\R^2)$, we have
\begin{align}
&\frac1{\ln \Omega}\int_{\R^3\setminus\mathcal N_x}\tilde{K}_\Omega(x-y)|g(y)|dy\nonumber\\
=&\frac1{\ln \Omega}\int_{\big\{x,y\in\R^3:~|x_3-y_3|>2\big\}}\tilde{K}_\Omega(x-y)|g(y)|dy+\frac1{\ln \Omega}\int_{\big\{x,y\in\R^3:~|x_{\perp}-y_\perp|>2, ~|x_3-y_3|\leq 2\big\}}\tilde{K}_\Omega(x-y)|g(y)|dy\nonumber\\
\leq &\f{\|(1+|y_\perp|^2)^2 g\|_{L^\infty(\R^3)}}{\ln \Omega}\int_{\R^2}(1+|y_\perp| ^2)^{-2}dy_\perp\int_{-2}^2\f{d(x_3-y_3)}{\sqrt{4\Omega^{-1/2}(\ln\Omega)^2 +(x_3-y_3)^2}}+\frac{1}{2 \ln \Omega}\int_{\R^3}|g(y)|dy \label{5.10}\\
\leq&  C\|(1+|y_\perp|^2)^2 g\|_{L^1(\R^3)\cap L^\infty(\R^3)}.\nonumber
\end{align}
Combining \eqref{5.9} and \eqref{5.10}, we obtain that for sufficiently large $\Omega$,
\begin{equation}\label{5.11}
\|T_\Omega g\|_{L^\infty(\R^3)}\leq  C\|(1+|y_\perp|^2)^2 g\|_{L^1(\R^3)\cap L^\infty(\R^3)}.
\end{equation}
Since $P_1 w_\Omega(x)=\psi_1(x_\perp)v_\Omega(x_3)$ with $\psi_1(x_\perp)=\frac{1}{\sqrt{\pi}}e^{-\frac{|x_{\perp}|^2}{2}}$, it follows from Lemma \ref{lem4.2} and \eqref{5.11} that there exists a constant $C>0$, independent of $\Omega$, such that
\begin{equation}\label{5.15}
\|T_\Omega |P_1 w_\Omega|^2\|_{L^\infty(\R^3)}\leq  C \quad  \text{as}  \quad \Omega\to+\infty.
\end{equation}

It remains to estimate the $Q_1-$part. Recall the three-dimensional Gagliardo-Nirenberg inequality (cf. \cite{Weinstein})
\begin{equation}\label{5.12}
\|Q_1 w_{\Omega}\|_{L^{4}(\R^3)}\leq C\| Q_1 w_{\Omega}\|_{2}^{1 / 4}\|\nabla (Q_1 w_{\Omega})\|_{2}^{3 / 4}.
\end{equation}
Moreover, direct calculations yield that
$$\int_{\mathcal N_x}|\tilde{K}_\Omega(x-y)|^2dy\leq C\int_0^2rdr\int_{-2}^2\f{ds}{\Omega^{-1/2}(\ln\Omega)^2
    r^2+s^2}\leq C \Omega^{1/4}(\ln\Omega)^{-1}\ \ \text{as}\ \ \Omega\to+\infty.$$
We then obtain from Proposition \ref{lem4.1}, \eqref{4.25} and \eqref{5.12} that
\begin{align}
\frac1{\ln \Omega} \int_{\mathcal N_x}\tilde{K}_\Omega(x-y)|Q_1 w_{\Omega}(y)|^2dy\leq& \frac1{\ln \Omega} \|\tilde{K}_\Omega(x-y)\|_{L^2(\mathcal N_x)}\|Q_1 w_{\Omega}\|_{L^{4}(\R^3)}^2\label{5.13}\\
\leq&\f{C}{\ln \Omega}\Omega^{\f18}(\ln \Omega)^{-\f12}\|Q_1 w_{\Omega}\|_{2}^{\f12}\|Q_1 w_{\Omega}\|_{H^{1}(\R^3)}^{\f32}\leq C\nonumber
\end{align}
as $\Omega\to+\infty$.
Since $|\tilde{K}_\Omega(x-y)|\leq C \Omega^{\f14}(\ln \Omega)^{-1}$ on $\mathcal N_x^c$, we derive from \eqref{4.25} that
\begin{equation}\label{5.14}
\frac1{\ln \Omega}
\int_{\mathcal N_x^c}\tilde{K}_\Omega(x-y)|Q_1 w_{\Omega}(y)|^2dy\leq C \Omega^{\f14}(\ln \Omega)^{-2} \|Q_1 w_{\Omega}\|_{2}^{2}\leq C\ \ \text{as}\ \ \Omega\to+\infty.
\end{equation}
Combining \eqref{5.13} and \eqref{5.14}, we obtain that
\begin{equation}\label{5.16}
\|T_\Omega |Q_1 w_\Omega|^2\|_{L^\infty(\R^3)} \leq C\ \ \text{as}\ \ \Omega\to+\infty.
\end{equation}
We finally conclude from \eqref{5.15} and \eqref{5.16} that there exists a constant $C>0$, independent of $\Omega$, such that
\begin{equation}\label{U}
\|U_\Omega\|_{L^\infty(\R^3)} =\|T_\Omega | w_\Omega|^2\|_{L^\infty(\R^3)}\leq 2\big(\|T_\Omega |P_1 w_\Omega|^2\|_{L^\infty(\R^3)}+\|T_\Omega |Q_1 w_\Omega|^2\|_{L^\infty(\R^3)}\big)\leq C
\end{equation}
%\ \ \text{as}\ \ \Omega\to+\infty.$$
as $\Omega\to+\infty$.
Thus, $\|F_\Omega\|_2\leq\big(\|U_\Omega\|_{L^\infty(\R^3)}+ |\nu_\Omega |\big) \|w_\Omega\|_2\leq C$ as $\Omega\to+\infty$ for some constant $C>0$ independent of $\Omega$, and \eqref{FO} is proved.

By the definition of $B$, one can calculate that
\begin{equation}\label{5.17}
\|B w_\Omega\|_2^2=\|\Delta w_\Omega\|_2^2+\|(1+|x_\perp|^2)w_\Omega\|_2^2+2\int_{\R^3}(1+|x_\perp|^2)|\nabla w_\Omega|^2dx-4\|w_\Omega\|_2^2.
\end{equation}
Since
 $\|\Delta w_\Omega\|_2^2=\sum_{i,j=1}^3\|\p_i \p_j w_\Omega\|_2^2$,
%\|\widehat{\Delta w_\Omega}\|_2^2=\int_{\R^3}|\xi|^4|\widehat{ w_\Omega}|^2 d\xi=\sum_{i,j=1}^3\int_{\R^3}\xi_i^2\xi_j^2|\widehat{ w_\Omega}|^2 d\xi=
%One can also calculate from \eqref{5.17} that $\|\nabla w_\Omega\|_2^2\leq \|w_\Omega\|_2\|\Delta w_\Omega\|_2\leq C(\|B w_\Omega\|_2^2+\|w_\Omega\|_2^2)$.
we conclude from \eqref{5.17} that there exists a constant $C>0$, independent of $\Omega$, such that
\begin{equation}\label{H2}
\|w_\Omega\|_{H^2(\R^3)}^2\leq C(\|B w_\Omega\|_2^2+4\|w_\Omega\|_2^2) .
\end{equation}
Following \eqref{5.4}, \eqref{FO} and \eqref{H2}, we obtain that $\{w_\Omega\}$ is bounded uniformly in $H^2(\R^3)$ as $\Omega\to+\infty$. Note that $H^{2\alpha}(\R^3)\hookrightarrow L^\infty(\R^3)$ is continuous for $\alpha>\f34$. Applying the Sobolev interpolation inequality, one can further deduce from the $H^1$-convergence in Theorem \ref{thm1.2} that for $\f34<\alpha<1$,
\begin{align}
\|w_\Omega-W_0\|_{L^\infty(\R^3)}&\leq C\|w_\Omega-W_0\|_{H^{2\alpha}(\R^3)}\leq C\|w_\Omega-W_0\|_{2}^{1-\alpha}
\|w_\Omega-W_0\|_{H^2(\R^3)}^\alpha\to0\nonumber
%\ \ \text{as}\ \
\end{align}
as $\Omega\to+\infty$.
%Since $H^2(\R^3)\hookrightarrow L^\infty(\R^3)$ is continuous according to Fourier transformation, we conclude that $\{w_\Omega\}$ is also bounded uniformly in $L^\infty(\R^3)$ as $\Omega\to+\infty$.
The proof of Proposition \ref{pro5.1} is therefore complete.
\qed

\vskip 0.05truein
\noindent{\bf Proof of Theorem \ref{thm1.2}.}
Combining the $H^1$-convergence proved above with the $L^\infty$-convergence obtained in Proposition \ref{pro5.1}, we obtain that
\begin{equation*}
w_\Omega^\tau\to W_0
\quad\text{strongly in }H^1(\R^3)\cap L^\infty(\R^3) \ \ \text{as}\ \ \Omega\to+\infty.
\end{equation*}
This completes the proof of Theorem \ref{thm1.2}.
\qed

\section{Uniqueness of Ground States as $\Omega\to+\infty$}\label{S5}
In this section, we prove that positive minimizers are unique up to
translations in the $x_3$-direction in the strongly confined regime
$\Omega\to+\infty$. By Proposition \ref{lem2.3}, it suffices to prove uniqueness among
positive even minimizers. We first recall the one-dimensional limiting profile and the corresponding
linearized coercivity estimate, which will be used in the proof of the
large-$\Omega$ coercivity.
Let
\begin{equation}\label{0rd}
\rho_0(x_3):=\f14\left(\cosh\f{x_3}{8}\right)^{-1}.
\end{equation}
A direct computation gives
\begin{equation}\label{1rho}
-\rho_0''-\f12\rho_0^3=\lambda_{1d}\rho_0 \text{ \ in \ }\R,\qquad\lambda_{1d}:=-\f1{64}.
\end{equation}
Define the one-dimensional linearized quadratic form by
\begin{equation}\label{q1d}
q_{1d}(h):=\int_{\R}|h'|^2dx_3-\lambda_{1d}\int_{\R}h^2dx_3-\f32\int_{\R}\rho_0^2h^2dx_3.
\end{equation}
The associated one-dimensional linearized operator is
\begin{equation*}
L_{1d}=-\p_{x_3}^2-\lambda_{1d}-\f32\rho_0^2=-\p_{x_3}^2+\f1{64}-\f32\rho_0^2 .
\end{equation*}
It is well known from \cite[Propositions 2.8 and 2.9]{Weinstein1} that the one-dimensional ground state $\rho_0$ is
nondegenerate in the sense that
$\ker L_{1d}=\operatorname{span}\{\rho_0'\}$, and $L_{1d}$ is coercive on the restricted even subspace. More precisely,
there exists a constant $c>0$ such that
\begin{equation}\label{L2}
q_{1d}(g)\geq c\|g\|_{L^2(\R)}^2
\end{equation}
for all real-valued even functions $g\in H^1(\R)$ satisfying  $\langle g,\rho_0\rangle_{L^2(\R)}=0$.
Furthermore, \eqref{L2} can be upgraded to
\begin{equation}\label{0qce}
q_{1d}(g)\geq c_1\|g\|_{H^1(\R)}^2
\end{equation}
for some constant $0<c_1<1$ and every such $g$.

Next, we introduce the rescaled Coulomb interaction appearing after the
large-$\Omega$ anisotropic scaling. For $f,g\in L^1_{\rm loc}(\R^3)$, define
\begin{equation}\label{0Dd}
\mathcal D_\Omega(f,g):=\f1{\ln\Omega}\iint_{\R^3\times\R^3}\f{f(x)g(y)}
{\sqrt{\Omega^{-\f12}(\ln\Omega)^2|x_\perp-y_\perp|^2+(x_3-y_3)^2}}dxdy .
\end{equation}
The following lemma provides the estimates on the nonlocal terms which are
needed in the proof of the large-$\Omega$ coercivity.

\begin{Lemma}\label{0lnc}
Let $\phi\in\H$ be real-valued and write
\begin{equation}\label{c5}
\phi=P_1\phi+Q_1\phi=\psi_1(x_\perp)h(x_3)+\eta,
\qquad\eta:=Q_1\phi,
\end{equation}
where $P_1$ and $Q_1$ are as in \eqref{P} and \eqref{QO},
respectively.
%Assume that $\{h_\Omega\}$ is bounded uniformly in $H^1(\R)$ and that $\{\eta_\Omega\}$ is bounded uniformly in $\H$. Then
%\begin{equation}\label{0nl1}
%\int_{\R^3}U_\Omega |P_1\phi|^2dx
%=\frac12\int_{\R}\rho_0^2h_\Omega^2dx_3+o(1)\|h_\Omega\|_{H^1(\R)}^2
%\quad \text{as }\quad \Omega\to+\infty,
%\end{equation}
%and
%\begin{equation}\label{0nl2}
%\mathcal D_\Omega\big(w_\Omega P_1\phi,w_\Omega P_1\phi\big)
%=\frac12\int_{\R}\rho_0^2h_\Omega^2dx_3
%+o(1)\|h_\Omega\|_{H^1(\R)}^2
%\quad \text{as }\quad \Omega\to+\infty.
%\end{equation}
%Here $U_\Omega$, $\rho_0$, and $w_\Omega$ are defined by
%\eqref{5.1}, \eqref{0rd}, and \eqref{scaling}, respectively.
%and the $o(1)$-terms are uniform for $\{h_\Omega\}$ bounded in $H^1(\R)$.
Then, for any $\varepsilon>0$, there exists $C_\varepsilon>0$,
independent of $\Omega$, such that
\begin{align}
&\int_{\R^3}U_\Omega\eta ^2dx+2\mathcal D_\Omega(w_\Omega\eta ,w_\Omega\eta)    \label{0nh}    \\
\leq&\varepsilon\int_{\R^3}|\partial_{x_3}\eta|^2dx+\varepsilon a_\Omega\int_{\R^3}\Big(|\nabla_\perp\eta|^2+|x_\perp|^2\eta^2-2\eta^2\Big)dx
+C_\varepsilon\|\eta\|_2^2\quad \text{as }\quad \Omega\to+\infty,\nonumber
\end{align}
where $U_\Om$, $w_\Om$ and $a_\Omega$ as in \eqref{5.1}, \eqref{scaling} and \eqref{5.3}, respectively.
Moreover, for the above $\varepsilon>0$ and sufficiently large $\Omega>0$, we also have
\begin{align}
&\left|\int_{\R^3}U_\Omega \eta P_1\phi dx
\right|+\left|\mathcal D_\Omega\big(w_\Omega P_1\phi,w_\Omega\eta\big)
\right|  \label{0nm} \\
\leq&\sqrt{\varepsilon}\|h\|_{H^1(\R)}^2
+\sqrt{\varepsilon}\int_{\R^3}|\partial_{x_3}\eta|^2dx
+\sqrt{\varepsilon} a_\Omega\int_{\R^3}\Big(|\nabla_\perp\eta|^2
+|x_\perp|^2\eta^2-2\eta^2\Big)dx+C_{\varepsilon}\|\eta\|_2^2.\nonumber
\end{align}
\end{Lemma}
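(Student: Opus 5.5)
The plan is to reduce every nonlocal term in Lemma \ref{0lnc} to two ingredients that are already available. The first is the uniform bound $\|U_\Omega\|_{L^\infty(\R^3)}\leq C$ from \eqref{U}. The second is a rescaled version of the form estimate \eqref{4.10}/Lemma \ref{lemB}, applied to the higher-mode component $\eta$. Throughout, $w_\Omega$ is bounded in $H^2(\R^3)$, hence in $L^\infty(\R^3)$, by the proof of Proposition \ref{pro5.1}.

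\emph{Terms quadratic in $\eta$ (estimate \eqref{0nh}).} The first term is immediate: $\int U_\Omega\eta^2\leq \|U_\Omega\|_\infty\|\eta\|_2^2\leq C\|\eta\|_2^2$. For $\mathcal D_\Omega(w_\Omega\eta,w_\Omega\eta)$, set $f=w_\Omega\eta$. Since $\mathcal D_\Omega$ is a positive definite Coulomb-type form, the natural bound is
$$\mathcal D_\Omega(f,f)\leq \|T_\Omega(f\cdot)\|\ldots,$$
but a more direct route is preferable. Write $\mathcal D_\Omega(f,f)=\int (T_\Omega f) f$ and use Cauchy--Schwarz in the form $|f(x)f(y)|\leq \tfrac12(w_\Omega(x)^2\eta(y)^2+w_\Omega(y)^2\eta(x)^2)$. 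This gives
$$\mathcal D_\Omega(w_\Omega\eta,w_\Omega\eta)\leq \int U_\Omega\,\eta^2\,dx\leq C\|\eta\|_2^2.$$
So \eqref{0nh} in fact holds even with $\varepsilon=0$, and the kinetic terms on the right-hand side are merely harmless slack. If this pointwise Cauchy--Schwarz step is accepted, it settles the quadratic part cleanly. As a backup, one could undo the scaling in \eqref{4.10} (as was done before \eqref{4.26}) to obtain
$$\mathcal D_\Omega(|\eta|^2,|\eta|^2)\leq \varepsilon\|\partial_{x_3}\eta\|_2^2+\varepsilon a_\Omega\langle\eta,(A_1-2)\eta\rangle+C_\varepsilon\|\eta\|_2^2\cdot(\text{norm}).$$

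\emph{Mixed terms (estimate \eqref{0nm}).} Apply the same Cauchy--Schwarz trick to each term:
$$\Big|\int U_\Omega\eta P_1\phi\Big|\leq \|U_\Omega\|_\infty\|\eta\|_2\|h\|_{L^2}\leq \sqrt\varepsilon\|h\|_{L^2}^2+\frac{C}{\sqrt\varepsilon}\|\eta\|_2^2,$$
and, using the positivity of the kernel,
$$|\mathcal D_\Omega(w_\Omega P_1\phi,w_\Omega\eta)|\leq \tfrac12\Big(\delta\int U_\Omega |P_1\phi|^2+\delta^{-1}\int U_\Omega\eta^2\Big).$$
Here $\int U_\Omega|P_1\phi|^2\leq C\|h\|_{L^2}^2$. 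Choosing $\delta=\sqrt\varepsilon$ yields \eqref{0nm} with $C_\varepsilon=C\varepsilon^{-1/2}$, again with spare kinetic terms.

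\emph{Main obstacle.} The key point to verify is that the pointwise inequality $2|f(x)f(y)|\leq w_\Omega(x)^2\eta(y)^2+w_\Omega(y)^2\eta(x)^2$ for $f=w_\Omega\eta$ may legitimately be integrated against the positive kernel $\tilde K_\Omega(x-y)/\ln\Omega$. It can: the kernel is nonnegative and symmetric, so the integral of the right-hand side equals $2\int U_\Omega\eta^2$. The real input is therefore the uniform $L^\infty$ bound \eqref{U} on $U_\Omega$, which rests on the $Q_1$-control \eqref{4.25}. I would double-check that \eqref{U} holds for arbitrary minimizers $u_\Omega$ (not only along a subsequence), since the constants must be independent of $\Omega$. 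If a sharper, $\varepsilon$-small version is needed later, I would revert to the Lemma \ref{lemB}-based bound, in which the kinetic terms absorb the singularity of the kernel.
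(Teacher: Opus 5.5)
Your proof is correct, and it takes a genuinely different and more elementary route than the paper. For \eqref{0nh}, the paper applies the two-body form bound \eqref{HH} (the rescaled \eqref{4.8}) to $\Psi(x,y)=w_\Omega(x)\eta(y)$ and uses the energy bound \eqref{ww}, $\langle H_xw_\Omega,w_\Omega\rangle\le C$. This gives $\int U_\Omega\eta^2=\mathcal D_\Omega(w_\Omega^2,\eta^2)\le\varepsilon\langle H\eta,\eta\rangle+C_\varepsilon\|\eta\|_2^2$. The same pointwise AM--GM step you use then yields the factor $3\mathcal D_\Omega(w_\Omega^2,\eta^2)$. For \eqref{0nm}, the paper applies Cauchy--Schwarz with respect to the weight $U_\Omega$ and the positive-definite form $\mathcal D_\Omega$. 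It bounds the $P_1\phi$ factors by $C\|h\|_{H^1(\R)}$ through \eqref{0nl1}--\eqref{0nl2}, which rest on Lemma \ref{lem4.3} and on $v_\Omega\to\rho_0$.

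You bypass all of this with the uniform bound $\|U_\Omega\|_{L^\infty(\R^3)}\le C$ from \eqref{U}, and this is legitimate. That bound is proved in Proposition \ref{pro5.1}, before this lemma. Its proof uses only \eqref{4.25} and Lemma \ref{lem4.2}, whose constants are uniform over all minimizers, so no subsequence issue arises. The paper itself relies on \eqref{U} later, in the proof of Theorem \ref{large-uniqueness}. The extra terms on the right of \eqref{0nh} are nonnegative, since $A_1\ge 2$, so your $\varepsilon=0$ bound does imply the stated inequality.

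Your route gives a stronger statement. In \eqref{0nh} no kinetic terms are needed at all. In \eqref{0nm}, $\|h\|_{L^2(\R)}$ replaces $\|h\|_{H^1(\R)}$, and you need neither Lemma \ref{lem4.3} nor the profile convergence. The paper's route to \eqref{0nh} is more robust in one respect: it uses only energy-level information rather than pointwise control of the Hartree potential. Moreover, \eqref{0nl1}--\eqref{0nl2} are needed anyway in Step 1 of Proposition \ref{0prlo}.

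Two small fixes. First, with $\delta=\sqrt\varepsilon$ the coefficient of $\|h\|_{L^2}^2$ comes out as $(1+C/2)\sqrt\varepsilon$ rather than $\sqrt\varepsilon$, so take $\delta=\sqrt\varepsilon/C$ instead. Second, drop the dangling fragment about $\|T_\Omega(f\cdot)\|$ and the ``backup'' bound. That backup concerns the quartic quantity $\mathcal D_\Omega(|\eta|^2,|\eta|^2)$, not the quadratic one the lemma requires.
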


\noindent{\bf Proof.}
Let
\begin{equation*}
V_\Omega(x-y):=\frac1{\ln\Omega} \frac1{\sqrt{\Omega^{-1/2}(\ln\Omega)^2|x_\perp-y_\perp|^2 +(x_3-y_3)^2}} .
\end{equation*}
After the same rescaling as in the definition of $\mathcal D_\Omega$,
\eqref{4.8} implies that, for every $0<\delta<1$, there exists
$C_\delta>0$, independent of $\Omega$, such that
\begin{equation}\label{HH}
V_\Omega(x-y)\leq\delta(H_x+H_y)+C_\delta
\end{equation}
in the sense of quadratic forms on $L^2(\R^6)$, where $H_x=-\partial_{x_3}^2+a_\Omega(A_1-2)$, and $a_\Omega$ and $A_1$ are as in \eqref{5.3} and \eqref{A}, respectively.
Applying this estimate to
$\Psi(x,y):=w_\Omega(x)\eta(y)$,
and using $\|w_\Omega\|_2=1$, we obtain that
\begin{equation*}
\begin{aligned}
\mathcal D_\Omega(w_\Omega^2,\eta^2)
&=\iint_{\R^3\times\R^3}V_\Omega(x-y)w_\Omega^2(x)\eta^2(y) dxdy \leq \delta\langle H_xw_\Omega,w_\Omega\rangle\|\eta\|_2^2+\delta\langle H_y\eta,\eta\rangle+C_\delta\|\eta\|_2^2.
\end{aligned}
\end{equation*}
According to \eqref{4.4} and Lemma \ref{lem4.2}, one can check that there exists a constant $C>0$, independent of $\Omega$, such that
\begin{equation}\label{ww}
\langle H_xw_\Omega,w_\Omega\rangle\leq C \quad \text{as} \quad \Omega\to+\infty,
\end{equation}
and hence $\mathcal D_\Omega(w_\Omega^2,\eta^2) \leq\delta\langle H_y\eta,\eta\rangle+C_\delta\|\eta\|_2^2$ as $\Omega\to+\infty$. Since
$ \int_{\R^3}U_\Omega\eta^2dx = \mathcal D_\Omega(w_\Omega^2,\eta^2)$, we obtain that for any $\varepsilon>0$,
\begin{align}
&\int_{\R^3}U_\Omega\eta^2dx+2\mathcal D_\Omega(w_\Omega\eta,w_\Omega\eta)\nonumber\\
\leq &3\mathcal D_\Omega(w_\Omega^2,\eta^2) \leq \varepsilon\langle H_y\eta,\eta\rangle+C_\varepsilon\|\eta\|_2^2\label{0fbu}\\
\leq&\varepsilon\int_{\R^3}|\partial_{x_3}\eta|^2dx+\varepsilon a_\Omega\int_{\R^3}\Big(|\nabla_\perp\eta|^2+|x_\perp|^2\eta^2-2\eta^2\Big)dx+C_\varepsilon\|\eta\|_2^2   \quad \text{as} \quad \Omega\to+\infty.\nonumber
\end{align}
Thus, \eqref{0nh} is true.

In order to prove \eqref{0nm}, we first estimate $\int_{\R^3}U_\Omega |P_1\phi|^2 dx$.
Recall from \eqref{c1} and \eqref{c5} that
\begin{equation*}
P_1w_\Omega:=\psi_1(x_\perp)v_\Omega(x_3),\qquad P_1\phi:=\psi_1(x_\perp)h(x_3),
\end{equation*}
where $\psi_1(x_\perp)$ is as in \eqref{ot}.
By choosing $F=v_\Omega^2$ and $G=h^2$,
we obtain from Lemmas \ref{lem4.2} and \ref{lem4.3} that
\begin{equation*}
\mathcal D_\Omega(|P_1w_\Omega|^2,|P_1\phi|^2)= \mathcal K_\Omega(v_\Omega^2,h^2)=\f12\int_{\R}v_\Omega^2h^2dx_3+o(1)\|h\|_{H^1(\R)}^2 \quad \text{as }\quad \Omega\to+\infty .
\end{equation*}
Since
$v_\Omega\to\rho_0$ in $L^\infty(\R)$  as $\Omega\to+\infty$ in view of \eqref{rho} and \eqref{0rd}, it then follows that
\begin{equation}\label{0ldm}
\mathcal D_\Omega\big(|P_1w_\Omega|^2,|P_1\phi|^2\big) = \f12\int_{\R}\rho_0^2h^2dx_3
+o(1)\|h\|_{H^1(\R)}^2 \quad \text{as}\quad  \Omega\to+\infty.
\end{equation}
We next pass from $|P_1w_\Omega|^2$ to $w_\Omega^2$. Since
$ w_\Omega=P_1w_\Omega+Q_1w_\Omega$,
one can check that
\begin{equation*}
w_\Omega^2-|P_1w_\Omega|^2 =2 P_1w_\Omega  Q_1w_\Omega +|Q_1w_\Omega|^2 .
\end{equation*}
%Note that the terms containing at least one factor $Q_1w_\Omega$ are negligible.
Applying Lemma \ref{lem4.3}, we further calculate from \eqref{4.26}, \eqref{4.28} and \eqref{PPPQ} that
\begin{align}
\left|
\mathcal D_\Omega\big(w_\Omega^2-|P_1w_\Omega|^2,|P_1\phi|^2\big)
\right|\leq &\sqrt{\mathcal D_\Omega\big(w_\Omega^2-|P_1w_\Omega|^2, w_\Omega^2-|P_1w_\Omega|^2\big)}\cdot
\sqrt{\mathcal D_\Omega\big(|P_1\phi|^2, |P_1\phi|^2\big)}\nonumber\\
=&o(1)\|h\|_{H^1(\R)}^2 \quad \text{as}\quad  \Omega\to+\infty.\nonumber
\end{align}
Therefore, it then follows from \eqref{0ldm} that for sufficiently large $\Omega>0$,
\begin{equation}\label{0nl1}
\begin{aligned}
\int_{\R^3}U_\Omega |P_1\phi|^2dx
=\mathcal D_\Omega\big(w_\Omega^2,|P_1\phi|^2\big) =\f12\int_{\R}\rho_0^2h^2dx_3
+o(1)\|h\|_{H^1(\R)}^2.
%\quad \text{as}\quad  \Omega\to+\infty.
\end{aligned}
\end{equation}

We next estimate $\mathcal D_\Omega\big(w_\Omega P_1\phi,w_\Omega P_1\phi\big)$. Decompose
$w_\Omega P_1\phi= P_1w_\Omega P_1\phi+ Q_1w_\Omega P_1\phi$.
Since
 $ \|v_\Omega h\|_{L^1(\R)\cap H^1(\R)}\leq C\|h\|_{H^1(\R)}$ as $\Omega\to+\infty$,
it yields from Lemmas \ref{lem4.2} and \ref{lem4.3} that
\begin{equation*}
\begin{aligned}
 &\mathcal D_\Omega\big(P_1w_\Omega P_1\phi,P_1w_\Omega P_1\phi\big)=
\mathcal K_\Omega(v_\Omega h,v_\Omega h)\\
=&\f12\int_{\R}v_\Omega^2h^2dx_3+o(1)\|h\|_{H^1(\R)}^2=\f12\int_{\R}\rho_0^2h^2dx_3
+o(1)\|h\|_{H^1(\R)}^2 \quad \text{as}\quad  \Omega\to+\infty.
\end{aligned}
\end{equation*}
%Moreover, one can deduce from Theorem \ref{thm1.2} and \eqref{rho} that
%\begin{equation}\label{0qls}
%\|Q_1w_\Omega\|_{L^\infty(\R^3)}\leq \|w_\Omega-W_0\|_{L^\infty(\R^3)}+\|P_1w_\Omega-W_0\|_{L^\infty(\R^3)}\to0 \quad \text{as}\quad  \Omega\to+\infty .
%\end{equation}
Applying the Hardy--Littlewood--Sobolev
inequality after the change of variables
$
X_\perp=\Omega^{-1/4}(\ln\Omega)x_\perp$,
$X_3=x_3$,
we have
$$
|\mathcal D_\Omega(f,g)|\leq C\frac{\Omega^{1/6}}{(\ln\Omega)^{5/3}}\|f\|_{L^{6/5}(\R^3)}\|g\|_{L^{6/5}(\R^3)}.
$$
Since $P_1\phi=\psi_1h$, we deduce from the embedding
$H^1(\R)\hookrightarrow L^3(\R)$ that
$$
\|Q_1w_\Omega P_1\phi\|_{L^{6/5}(\R^3)}\leq\|Q_1w_\Omega\|_2\|\psi_1h\|_{L^3(\R^3)}
\leq C\|Q_1w_\Omega\|_2\|h\|_{H^1(\R)}.
$$
Consequently, we obtain from \eqref{4.4} that
\begin{align}
\left|
\mathcal D_\Omega\bigl(Q_1w_\Omega P_1\phi,
Q_1w_\Omega P_1\phi\bigr)
\right|
\leq&C\frac{\Omega^{1/6}}{(\ln\Omega)^{5/3}}\|Q_1w_\Omega\|_2^2\|h\|_{H^1(\R)}^2\\
\leq &\frac{C}{\Omega^{1/3}(\ln\Omega)^{2/3}}\|h\|_{H^1(\R)}^2
=o(1)\|h\|_{H^1(\R)}^2\quad\text{as}\quad  \Omega\to+\infty.\nonumber
\end{align}
For the mixed term, we have
\begin{equation*}
\begin{aligned}
\Big|\mathcal D_\Omega\big(P_1w_\Omega P_1\phi, Q_1w_\Omega P_1\phi\big)
\Big|\leq\sqrt{\mathcal D_\Omega\big( P_1w_\Omega P_1\phi, P_1w_\Omega P_1\phi\big)} \cdot \sqrt{\mathcal D_\Omega\big( Q_1w_\Omega P_1\phi,  Q_1w_\Omega P_1\phi\big)} =
o(1)\|h\|_{H^1(\R)}^2
\end{aligned}
\end{equation*}
as $\Omega\to+\infty$. Hence
\begin{equation}\label{0nl2}
\mathcal D_\Omega\big(w_\Omega P_1\phi,w_\Omega P_1\phi\big)=\f12\int_{\R}\rho_0^2h^2dx_3+o(1)\|h\|_{H^1(\R)}^2 \quad\text{as}\quad  \Omega\to+\infty.
\end{equation}

For the mixed terms, using Cauchy's inequality, we have
\begin{equation*}
\begin{aligned}
\left|\int_{\R^3}U_\Omega (P_1\phi)\eta dx\right|
&\leq\left(\int_{\R^3}U_\Omega |P_1\phi|^2dx\right)^{\f12}\left(\int_{\R^3}U_\Omega \eta ^2dx\right)^{\f12},
\end{aligned}
\end{equation*}
and
\begin{equation*}
\begin{aligned}
\Big|\mathcal D_\Omega\big(w_\Omega P_1\phi,w_\Omega\eta \big)\Big|
&\leq\sqrt{\mathcal D_\Omega\big(w_\Omega P_1\phi,w_\Omega P_1\phi\big)}\cdot
\sqrt{\mathcal D_\Omega\big(w_\Omega\eta ,w_\Omega\eta \big)}.
\end{aligned}
\end{equation*}
The first factors in the two estimates above are bounded by
$C\|h\|_{H^1(\R)}$ in view of  \eqref{0nl1} and
\eqref{0nl2}, while the second factors are controlled by
\eqref{0fbu}. Applying Young's inequality, we obtain
\eqref{0nm}. The proof of Lemma \ref{0lnc} is therefore complete. \qed

 \vskip 0.05truein

We now pass to the rescaled variational problem. For $v\in\H$ with $\|v\|_2=1$, define
\begin{equation}\label{0Id}
\begin{aligned}
\mathcal I_\Omega(v):=\frac12\int_{\R^3}|\partial_{x_3}v|^2dx+\frac{a_\Omega}{2}
\int_{\R^3}\Big(|\nabla_\perp v|^2+|x_\perp|^2v^2-2v^2\Big)dx -\frac14\mathcal D_\Omega(v^2,v^2),
\end{aligned}
\end{equation}
where $a_\Omega$ and $\mathcal D_\Omega$ are defined by \eqref{5.3} and \eqref{0Dd}, respectively. Define
$$u_v(x_\perp,x_3)
=\Omega^{1/4}(\ln\Omega)^{1/2}v\big(\Omega^{1/4}x_\perp,(\ln\Omega)x_3\big),$$
then $\|u_v\|_2=\|v\|_2$, and a direct change of variables yields that
$\mathcal E_\Omega(u_v)=2\sqrt{\Omega}+2(\ln\Omega)^2\mathcal I_\Omega(v)$.
Hence the original minimization problem \eqref{inf} is equivalent, after this scaling, to the minimization of $\mathcal I_\Omega$ under the constraint $\|v\|_2=1$. In particular, the function
$w_\Omega$ defined by \eqref{scaling} is a minimizer of \eqref{0Id} under the constraint $\|v\|_2=1$.

We next introduce the linearized quadratic form at a fixed minimizer
$w_\Omega$. For a fixed minimizer $w_\Omega$, define the linearized operator
$\mathcal L_{\Omega,w_\Omega}$ by the quadratic form
\begin{equation} \label{Q}
\begin{aligned}
\left\langle \mathcal L_{\Omega,w_\Omega}\phi,\varphi\right\rangle
:=&\int_{\R^3} \partial_{x_3}\phi \partial_{x_3} \varphi dx
+a_\Omega\int_{\R^3}\Big( \nabla_\perp\phi  \cdot \nabla_\perp \varphi+|x_\perp|^2\phi \varphi-2\phi \varphi\Big)dx \\
&-\int_{\R^3}U_\Omega\phi \varphi dx-2\mathcal D_\Omega(w_\Omega\phi,w_\Omega\varphi)
-\nu_\Omega\int_{\R^3}\phi \varphi dx, ~ \phi, \varphi\in\H,
\end{aligned}
\end{equation}
where $U_\Omega$, $\nu_\Omega$ and $\mathcal D_\Omega$ are given by \eqref{5.1},
\eqref{5.3} and \eqref{0Dd}, respectively. For simplicity, denote $\mathcal Q_\Omega(\phi)
:=\left\langle \mathcal L_{\Omega,w_\Omega}\phi, \phi\right\rangle$.
%Equivalently, this quadratic form is the second variation of the constrained
%Lagrangian at $w_\Omega$:
%\begin{equation}\label{Q}
%\mathcal Q_\Omega(\phi):=
%\left\langle \mathcal L_{\Omega,w_\Omega}\phi,\phi\right\rangle=
%\left.\frac{d^2}{dt^2}\right|_{t=0}\left[
     %   \mathcal I_\Omega(w_\Omega+t\phi)
      %  -\frac{\nu_\Omega}{2}\|w_\Omega+t\phi\|_2^2
%\right].\end{equation}
The following proposition is the main coercivity estimate for
$\mathcal Q_\Omega$ in the large-$\Omega$ regime.
%We shall use the following coercivity estimate, which is obtained from the coercivity of the limiting one-dimensional linearized operator and the decomposition $v=P_1v+Q_1v$.

\begin{Proposition}\label{0prlo}
There exist constants $\Omega_1>0$ and $c_0>0$ such that, for every
$\Omega>\Omega_1$,
\begin{equation}\label{0lc}
 \mathcal Q_\Omega(\phi)\geq c_0\|\phi\|_2^2
\end{equation}
for all real-valued $\phi\in\H$ satisfying
\begin{equation}\label{c2}
\phi(x_\perp,x_3)=\phi(x_\perp,-x_3),\qquad\langle \phi,w_\Omega\rangle=0 .
\end{equation}
%More precisely, for sufficiently large $\Omega>0$,
%\begin{equation}\label{0lcs}
 %       \mathcal Q_\Omega(\phi)      \geq
      %  c_0\|P_1\phi\|_{H^1(\R^3)}^2
% +  c_0a_\Omega\|Q_1\phi\|_2^2,
%\end{equation}
%where $P_1$, $Q_1$ and $a_\Omega$ are as in \eqref{P}, \eqref{QO} and \eqref{5.3}, respectively.
\end{Proposition}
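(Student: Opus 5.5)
We argue by contradiction combined with the decomposition $\phi=P_1\phi+Q_1\phi=\psi_1 h+\eta$ from \eqref{c5}. Since $w_\Omega$ is even in $x_3$ (we normalized $\tau_\Omega=0$) and $P_1,Q_1$ commute with the reflection $x_3\mapsto-x_3$, the function $h$ is even and $\eta$ is even in $x_3$. Expanding $\mathcal Q_\Omega(\phi)$, the kinetic and transverse parts split orthogonally by \eqref{3.1}--\eqref{3.3}. On the $P_1$ part the transverse form vanishes, because $(A_1-2)P_1=0$. On the $Q_1$ part we obtain $\|\partial_{x_3}\eta\|_2^2+a_\Omega\langle\eta,(A_1-2)\eta\rangle$, where $\langle\eta,(A_1-2)\eta\rangle\geq\frac12\langle \eta,A_1\eta\rangle\geq 2\|\eta\|_2^2$ by \eqref{3.12}. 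The potential terms split into three groups: the pure $P_1$ terms, the pure $\eta$ terms, and the cross terms. By \eqref{0nl1} and \eqref{0nl2}, the pure $P_1$ terms give
\[
-\int U_\Omega|P_1\phi|^2-2\mathcal D_\Omega(w_\Omega P_1\phi,w_\Omega P_1\phi)=-\tfrac32\int_\R\rho_0^2h^2dx_3+o(1)\|h\|_{H^1}^2 .
\]
Moreover $\nu_\Omega\to\lambda_{1d}=-\frac1{64}$. To see this, test \eqref{4} against $w_\Omega$ and use the energy convergence \eqref{4.34} together with \eqref{vv}; equivalently, project \eqref{4} with $P_1$ and pass to the limit using Theorem \ref{thm1.2}. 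Hence
\[
\mathcal Q_\Omega(\phi)\geq q_{1d}(h)-o(1)\|h\|_{H^1}^2+\|\partial_{x_3}\eta\|_2^2+a_\Omega\langle\eta,(A_1-2)\eta\rangle-\text{(cross and }\eta\text{ terms)}-|\nu_\Omega-\lambda_{1d}|\|h\|_2^2-|\nu_\Omega|\|\eta\|_2^2 .
\]

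Next we control the $\eta$ terms and the cross terms with Lemma \ref{0lnc}. By \eqref{0nh} with a small $\varepsilon$, the pure $\eta$ terms are at most $\varepsilon(\|\partial_3\eta\|^2+a_\Omega\langle\eta,(A_1-2)\eta\rangle)+C_\varepsilon\|\eta\|_2^2$. By \eqref{0nm}, the cross terms (which enter with a factor $2$ each) are at most $2\sqrt\varepsilon\|h\|_{H^1}^2$ plus the same kind of terms. The remainder $C_\varepsilon\|\eta\|_2^2$ is absorbed by the spectral gap: $a_\Omega\langle\eta,(A_1-2)\eta\rangle\geq 2a_\Omega\|\eta\|_2^2$ and $a_\Omega\to\infty$, so for $\Omega$ large the $\eta$ part contributes at least $\frac12\|\partial_3\eta\|^2+a_\Omega\|\eta\|_2^2$.

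It remains to deal with the $h$ part. Here $h$ is even but only approximately orthogonal to $\rho_0$. We have
\[
0=\langle\phi,w_\Omega\rangle=\langle h,v_\Omega\rangle_{L^2(\R)}+\langle\eta,Q_1w_\Omega\rangle ,
\]
so $|\langle h,\rho_0\rangle|\leq\|v_\Omega-\rho_0\|_2\|h\|_2+\|Q_1w_\Omega\|_2\|\eta\|_2=o(1)(\|h\|_2+\|\eta\|_2)$ by Theorem \ref{thm1.2} and Proposition \ref{lem4.1}. Write $h=g+\beta\rho_0/\|\rho_0\|_2^2$ with $g\perp\rho_0$ even and $\beta=\langle h,\rho_0\rangle$. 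Then \eqref{0qce} gives
\[
q_{1d}(h)\geq c_1\|g\|_{H^1}^2-C|\beta|\|g\|_{H^1}-C\beta^2\geq \tfrac{c_1}{2}\|h\|_{H^1}^2-o(1)\|\eta\|_2^2 .
\]
Choosing $\varepsilon$ so small that $2\sqrt\varepsilon<c_1/8$, and then $\Omega$ large, we get $\mathcal Q_\Omega(\phi)\geq \frac{c_1}{4}\|h\|_{H^1}^2+\frac{a_\Omega}{2}\|\eta\|_2^2\geq c_0\|\phi\|_2^2$.

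The main obstacle is making the $o(1)$ in \eqref{0nl1} and \eqref{0nl2} uniform in $h$, so that it is a genuine $o(1)\|h\|_{H^1}^2$. This requires Lemma \ref{lem4.3} applied to $F=v_\Omega^2$ and $G=h^2$, which needs control of $\|h^2\|_{L^1\cap H^1}\lesssim\|h\|_{H^1}^2$; this holds in one dimension. It also requires $v_\Omega\to\rho_0$ in $L^\infty(\R)$, which follows from $H^1$ convergence. A second delicate point is the identification $\nu_\Omega\to\lambda_{1d}$, which we would establish from the $P_1$-projected equation $-\partial_{3}^2 v_\Omega=P_1$-component of $(U_\Omega+\nu_\Omega)w_\Omega$, tested against $v_\Omega$, using \eqref{vv}.
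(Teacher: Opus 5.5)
Your proposal is correct and follows essentially the same route as the paper. It uses the splitting $\phi=\psi_1h+\eta$, reduces $\mathcal Q_\Omega(P_1\phi)$ to $q_{1d}(h)$ via \eqref{0nl1}--\eqref{0nl2} together with $\nu_\Omega\to\lambda_{1d}$ (obtained by testing \eqref{4} against $w_\Omega$), combines the near-orthogonality $\langle h,\rho_0\rangle=o(1)(\|h\|_2+\|\eta\|_2)$ with \eqref{0qce}, handles $\eta$ by the transverse spectral gap, and controls the $\eta$ and mixed Hartree terms by Lemma \ref{0lnc}; the only cosmetic point is that your opening phrase ``we argue by contradiction'' is inaccurate, since the estimate you give (like the paper's) is direct.
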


\noindent{\bf Proof.}
Decompose $\phi$ as in \eqref{c5}.
Since $\phi$ is even with respect to $x_3$, the function $h$ is even. We divide the proof into three steps.

 \vspace{.1cm}
 {\it  Step 1. Reduction of the lowest mode.}
By \eqref{0nl1}, \eqref{0nl2} and the definition of
$\mathcal Q_\Omega$, we have
\begin{equation*}
\begin{aligned}
\mathcal Q_\Omega(P_1\phi)
=&\int_{\R}|h'|^2dx_3-\nu_\Omega\int_{\R}h^2dx_3-\frac32\int_{\R}\rho_0^2h^2dx_3
+o(1)\|h\|_{H^1(\R)}^2 \\
=&q_{1d}(h)+(\lambda_{1d}-\nu_\Omega)\int_{\R}h^2dx_3
+o(1)\|h\|_{H^1(\R)}^2,
\end{aligned}
\end{equation*}
where $q_{1d}(h)$ is defined by \eqref{q1d}.
We first prove that
\begin{equation}\label{0ntl}
\nu_\Omega\to\lambda_{1d}\quad\text{as }\ \ \Omega\to+\infty .
\end{equation}
Indeed, testing \eqref{4} by
$w_\Omega$ and using $\|w_\Omega\|_2=1$, we get that
\begin{equation*}
\begin{aligned}
\nu_\Omega
=&\int_{\R^3}|\partial_{x_3}w_\Omega|^2dx+a_\Omega\int_{\R^3}
\Big(|\nabla_\perp w_\Omega|^2+|x_\perp|^2w_\Omega^2-2w_\Omega^2\Big)dx
-\int_{\R^3}U_\Omega w_\Omega^2dx .
\end{aligned}
\end{equation*}
According to Theorem \ref{thm1.2}, we obtain from \eqref{4.4}, \eqref{vv}, \eqref{4.29} and \eqref{5.3} that
\begin{equation*}
\nu_\Omega\to\int_{\R}|\rho_0'|^2dx_3-\frac12\int_{\R}\rho_0^4dx_3 \quad\text{as }\ \ \Omega\to+\infty.
\end{equation*}
On the other hand, multiplying \eqref{1rho}
by $\rho_0$ and using $\|\rho_0\|_2=1$, we obtain that
\begin{equation*}
\lambda_{1d} =\int_{\R}|\rho_0'|^2dx_3 -\frac12\int_{\R}\rho_0^4dx_3 .
\end{equation*}
Therefore, \eqref{0ntl} holds. Thus,
\begin{equation}\label{0lmec}
\mathcal Q_\Omega(P_1\phi) =q_{1d}(h)+o(1)\|h\|_{H^1(\R)}^2  \quad\text{as }\ \ \Omega\to+\infty.
\end{equation}

Next we prove that $h$ is almost orthogonal to $\rho_0$.
%Since
%\begin{equation*}
%\langle \phi,w_\Omega\rangle_{L^2(\R^3)}=0,\qquad \phi=\psi_1h+\eta, \qquad P_1w_\Omega=\psi_1v_\Omega ,
%\end{equation*}
%and since $P_1$ and $Q_1$ are orthogonal, we have
Note from \eqref{c1}, \eqref{c5} and \eqref{c2} that
\begin{equation*}
0 = \langle h,v_\Omega\rangle + \langle \eta,Q_1w_\Omega\rangle  .
\end{equation*}
Using $v_\Omega\to\rho_0$ in $L^2(\R)$ and
$\|Q_1w_\Omega\|_2\to0$ as $ \Omega\to+\infty$, we then obtain that for sufficiently large $\Omega>0$,
\begin{equation}\label{0aoc}
\begin{aligned}
|\langle h,\rho_0\rangle |\leq|\langle h,v_\Omega-\rho_0\rangle |
+|\langle \eta,Q_1w_\Omega\rangle |\leq
o(1)\|h\|_2+o(1)\|\eta\|_2. % \quad\text{as }\ \ \Omega\to+\infty.
\end{aligned}
\end{equation}
Set $\beta:= \langle h,\rho_0\rangle ,\qquad h_\perp:=h-\beta\rho_0$,so that $\langle h_\perp,\rho_0\rangle_{L^2(\R)}=0$ and $h_\perp$ is even. Thus, it follows from \eqref{0qce} that $q_{1d}(h_\perp)\geq c_1\|h_\perp\|_{H^1(\R)}^2$. Let $B_{1d}$ be the bilinear form associated with $q_{1d}$. Since
$\rho_0\in H^1(\R)\cap L^\infty(\R)$, we have
\begin{equation}\label{2}
|B_{1d}(h_\perp,\rho_0)|\leq C\|h_\perp\|_{H^1(\R)},\qquad|q_{1d}(\rho_0)|\leq C .
\end{equation}
Therefore, it follows from \eqref{0aoc}--\eqref{2} that
\begin{equation}\label{0qhlc}
\begin{aligned}
q_{1d}(h)&=q_{1d}(h_\perp+\beta\rho_0)  =q_{1d}(h_\perp)+2\beta B_{1d}(h_\perp,\rho_0)
+\beta^2q_{1d}(\rho_0)  \\
&\geq\frac{3c_1}{4}\|h_\perp\|_{H^1(\R)}^2
-C\beta^2 \geq \frac{c_1}{2}\|h\|_{H^1(\R)}^2-o(1)\|\eta\|_2^2 \quad\text{as }\ \ \Omega\to+\infty.
\end{aligned}
\end{equation}
Combining \eqref{0lmec} and
\eqref{0qhlc}, we obtain that
\begin{equation}\label{0lfc}
\mathcal Q_\Omega(P_1\phi) \geq\frac{c_1}{3}\|h\|_{H^1(\R)}^2-o(1)\|\eta\|_2^2 \quad\text{as }\ \ \Omega\to+\infty.
\end{equation}

\vspace{.1cm}
{\it Step 2. Estimate of the high-mode component.}
By the spectral gap of $A_1$ on $\operatorname{Ran} Q_1$, where $A_1$ is defined in \eqref{A}, we have
\begin{equation}\label{0sgc}
\int_{\R^3}\Big(|\nabla_\perp\eta|^2+|x_\perp|^2\eta^2-2\eta^2\Big)dx
\geq2\|\eta\|_2^2 .
\end{equation}
It then follows from \eqref{0nh}, \eqref{Q} and \eqref{0sgc} that
\begin{equation}\label{0hfc}
\begin{aligned}
\mathcal Q_\Omega(\eta)
&\geq\frac12\int_{\R^3}|\partial_{x_3}\eta|^2dx+\frac{a_\Omega}{2}
\int_{\R^3}\Big(|\nabla_\perp\eta|^2+|x_\perp|^2\eta^2-2\eta^2\Big)dx-C\|\eta\|_2^2  \\
&\geq(a_\Omega-C)\|\eta\|_2^2 .
\end{aligned}
\end{equation}
Since $a_\Omega=\frac{\sqrt{\Omega}}{(\ln\Omega)^2}\to+\infty$ as $\Omega\to+\infty$, the
high-mode component is positive for all sufficiently large $\Omega>0$.

\vspace{.1cm}
{\it  Step 3. Estimate of the mixed terms.}
We first compute the mixed term between $P_1\phi$ and
$\eta=Q_1\phi$. Since $P_1\partial_{x_3}\eta=\partial_{x_3}P_1\eta=0$, we have $\int_{\R^3}\partial_{x_3}(P_1\phi) \partial_{x_3}\eta dx=0$.
Moreover, one can check that
\begin{equation*}
\int_{\R^3}
\Big[
\nabla_\perp(P_1\phi)\cdot\nabla_\perp\eta
+
|x_\perp|^2(P_1\phi)\eta
-
2(P_1\phi)\eta
\Big]dx=0.
\end{equation*}
It then follows from \eqref{Q} that
\begin{equation*}
\mathcal Q_\Omega(P_1\phi,\eta)=-\int_{\R^3}U_\Omega \psi_1h \eta dx-2\mathcal D_\Omega(w_\Omega\psi_1h,w_\Omega\eta),
\end{equation*}
where $\mathcal D_\Omega$ is as in \eqref{0Dd} and $\int_{\R^3}(P_1\phi)\eta dx=0$ is used.
We further obtain from \eqref{0nm} that for every
$\varepsilon>0$,
\begin{align}
\left|\mathcal Q_\Omega(P_1\phi,\eta)\right|\leq&\varepsilon\|h\|_{H^1(\R)}^2+
\varepsilon\int_{\R^3}|\partial_{x_3}\eta|^2dx  \label{0mfc}\\
&+\varepsilon a_\Omega\int_{\R^3}\Big(|\nabla_\perp\eta|^2+|x_\perp|^2\eta^2
-2\eta^2\Big)dx+C_{\varepsilon }\|\eta\|_2^2 \quad\text{as }\ \ \Omega\to+\infty.\nonumber
\end{align}
Taking $\varepsilon=c_1/12$ in \eqref{0mfc}, and using
$ \mathcal Q_\Omega(\phi)=\mathcal Q_\Omega(P_1\phi)+2\mathcal Q_\Omega(P_1\phi,\eta)
+\mathcal Q_\Omega(\eta)$,
we obtain from \eqref{0lfc}, \eqref{0hfc} and
\eqref{0mfc} that for sufficiently large $\Omega>0$,
\begin{equation*}
\mathcal Q_\Omega(\phi)\geq\frac{c_1}{6}\|h\|_{H^1(\R)}^2+\Big(\f23 a_\Omega-C \Big)\|\eta\|_2^2 .
\end{equation*}
Note that $\f23 a_\Omega-C\geq \frac12a_\Omega$ for sufficiently large $\Omega>0$.
Therefore, for sufficiently large $\Omega>0$ and some constant $c_0>0$,
\begin{equation*}
\mathcal Q_\Omega(\phi)\geq c\|h\|_{H^1(\R)}^2+ca_\Omega\|\eta\|_2^2 \geq c_0\|P_1\phi\|_{H^1(\R^3)}^2+c_0a_\Omega\|Q_1\phi\|_2^2\geq c_0\|\phi\|_2^2.
\end{equation*}
%Since $P_1\phi=\psi_1h$ and $\|\psi_1\|_{L^2(\R^2)}=1$, the norm $\|P_1\phi\|_{H^1(\R^3)}$ is equivalent to $\|h\|_{H^1(\R)}$.
This proves \eqref{0lc}, and the proof of Proposition \ref{0prlo} is therefore complete.
\qed

%\begin{Remark}
%The evenness assumption is used to remove the translation direction.
%Indeed, since $\rho_0$ is even, $\rho_0'$ is odd. Hence, if $\phi$ is even in $x_3$, then the corresponding function $h$ is even and therefore
%\begin{equation*}
 %       \langle h,\rho_0'\rangle_{L^2(\R)}=0 .
%\end{equation*}
%Thus, the zero mode $\rho_0'$ of the one-dimensional linearized operator is automatically excluded in the even subspace. Without imposing evenness, one has to impose the
%additional orthogonality condition:
%\begin{equation*}
%        \langle \phi,\partial_{x_3}w_\Omega\rangle_{L^2(\R^3)}=0 ,
%\end{equation*}
%which, in the limit $\Omega\to+\infty$, corresponds to
%$\langle h, \rho_0'\rangle_{L^2(\R)}=0$. Consequently, the
%coercivity statement in the full space should be formulated under the two orthogonality conditions:
%\begin{equation*}
%        \langle \phi,w_\Omega\rangle_{L^2(\R^3)}=0
 %       \quad  \text{and} \quad
  %      \langle \phi, \partial_{x_3}w_\Omega\rangle_{L^2(\R^3)}=0 .
%\end{equation*}
%In the even subspace, the second condition is automatic, provided that $w_\Omega$ is chosen to be even in $x_3$.
%\end{Remark}

\vskip 0.05truein

We are now ready to prove Theorem \ref{large-uniqueness}. The idea is to use the
coercivity of the linearized quadratic form to rule out two distinct
minimizers after fixing the translation in the $x_3$-direction.

\vskip 0.05truein

\noindent{\bf Proof of Theorem \ref{large-uniqueness}.}
In view of Proposition \ref{lem2.3}, it suffices to
prove uniqueness among positive even minimizers.
We argue by contradiction. Suppose that there exist a sequence
$\Omega_n\to+\infty$ and two different positive even minimizers
$u_{\Omega_n}$ and $\widetilde u_{\Omega_n}$ of \eqref{inf}. %such that both are even with respect to the third component.
For simplicity, we omit the
subscript $n$ and write $\Omega$ instead of $\Omega_n$.

Let $w_\Omega$ and $\widetilde w_\Omega$ be the corresponding rescaled
functions defined by \eqref{scaling}. Then both $w_\Omega$ and
$\widetilde w_\Omega$ are minimizers of $\mathcal I_\Omega(v)$ under the
constraint $\|v\|_2=1$. Moreover, by Theorem \ref{thm1.2}, we have
\begin{equation*}
w_\Omega\to W_0,\quad
\widetilde w_\Omega\to W_0
\quad\text{strongly in}\quad H^1(\R^3)\cap L^\infty(\R^3)\quad\text{as}\quad \Omega\to+\infty.
\end{equation*}
In particular,
\begin{equation*}
\|w_\Omega-\widetilde w_\Omega\|_{H^1(\R^3)}+\|w_\Omega-\widetilde w_\Omega\|_{L^\infty(\R^3)}\to0 \quad \text{as}\quad \Omega\to+\infty.
\end{equation*}
Set
$c_\Omega:=\langle \widetilde w_\Omega,w_\Omega\rangle$.
Since $\|w_\Omega\|_2=\|\widetilde w_\Omega\|_2=1$ and
$\widetilde w_\Omega-w_\Omega\to 0$ in $L^2(\R^3)$ as $\Omega\to+\infty$, we have
$c_\Omega\to1$ as $\Omega\to+\infty$. Hence $c_\Omega>0$ for all sufficiently large $\Omega>0$.
Define an even function $R_\Omega:=\widetilde w_\Omega-c_\Omega w_\Omega$. Then
\begin{equation}\label{0Ro}
\langle R_\Omega,w_\Omega\rangle =0 .
\end{equation}
Moreover, since $\|w_\Omega\|_2=\|\widetilde w_\Omega\|_2=1$, we have
$\|R_\Omega\|_2^2=1-c_\Omega^2=:\delta_\Omega^2 >0$.
%Since $w_\Omega\neq \widetilde w_\Omega$, we have $\delta_\Omega>0$.
%Indeed, if $\delta_\Omega=0$, then $R_\Omega=0$, and hence
%$\widetilde w_\Omega=c_\Omega w_\Omega$. Since $c_\Omega>0$ and both
%functions have $L^2$-norm equal to $1$, this gives $c_\Omega=1$, and therefore $\widetilde w_\Omega=w_\Omega$, a contradiction.

Since $c_\Omega>0$ and $\delta_\Omega\to 0$ as $\Omega\to+\infty$, we have $c_\Omega=(1-\delta_\Omega^2)^{1/2}$ and $c_\Omega-1=O(\delta_\Omega^2)$. Consequently,
\begin{equation}\label{0wd}
\widetilde w_\Omega=(1-\delta_\Omega^2)^{1/2}w_\Omega+R_\Omega.
\end{equation}
Since $w_\Omega$ and $\widetilde w_\Omega$ are both minimizers of
$\mathcal I_\Omega$ under the constraint $\|v\|_2=1$, we have
%\begin{equation}\label{0ee}
%\mathcal I_\Omega(\widetilde w_\Omega)=
%\mathcal I_\Omega(w_\Omega).
%\end{equation}
%Since the two functions also have the same $L^2$-norm, \eqref{0ee}
%implies that
\begin{equation}\label{0mee}
\mathcal J_\Omega(\widetilde w_\Omega)=\mathcal J_\Omega(w_\Omega),
\end{equation}
where
\begin{equation*}
\mathcal J_\Omega(v):=\mathcal I_\Omega(v)-\frac{\nu_\Omega}{2}\|v\|_2^2 .
\end{equation*}
Set
\begin{equation}\label{0xd}
\xi_\Omega:=\widetilde w_\Omega-w_\Omega .
\end{equation}
We now expand $\mathcal J_\Omega$ at $w_\Omega$. By the explicit expansion
of the quartic Hartree term, we have
\begin{equation*}
\begin{aligned}
\mathcal J_\Omega(\widetilde w_\Omega)
=\mathcal J_\Omega(w_\Omega+\xi_\Omega)=\mathcal J_\Omega(w_\Omega)+\mathcal J_\Omega'(w_\Omega)[\xi_\Omega]+\frac12\left\langle\mathcal L_{\Omega,w_\Omega}\xi_\Omega,\xi_\Omega\right\rangle -\mathcal D_\Omega(w_\Omega\xi_\Omega,\xi_\Omega^2)-\frac14\mathcal D_\Omega(\xi_\Omega^2,\xi_\Omega^2),
\end{aligned}
\end{equation*}
where $\mathcal D_\Omega$ is as in \eqref{0Dd}.

According to \eqref{4},
we have
$\mathcal J_\Omega'(w_\Omega)=0$.
It remains to estimate the
cubic and quartic terms. We first prove that
\begin{equation}\label{0qs}
\mathcal D_\Omega(\xi_\Omega^2,\xi_\Omega^2)=o(1)\|\xi_\Omega\|_2^2\quad\text{as}\quad\Omega\to+\infty .
\end{equation}
Applying \eqref{HH}
to $\Psi(x,y):=\xi_\Omega(x)\xi_\Omega(y)$,
we obtain that
\begin{equation*}
\begin{aligned}
\mathcal D_\Omega(\xi_\Omega^2,\xi_\Omega^2)
=\iint_{\R^3\times\R^3}V_\Omega(x-y)\xi_\Omega^2(x)\xi_\Omega^2(y) dxdy \leq2\varepsilon\left\langle H_x\xi_\Omega,\xi_\Omega\right\rangle
\|\xi_\Omega\|_2^2+C_\varepsilon\|\xi_\Omega\|_2^4 .
\end{aligned}
\end{equation*}
Moreover, it follows from \eqref{ww} and \eqref{0xd} that
\begin{equation*}
\big|\left\langle H_x\xi_\Omega,\xi_\Omega\right\rangle\big| \leq 2\left\langle H_x w_\Omega,w_\Omega\right\rangle+2\left\langle H_x\widetilde w_\Omega,\widetilde w_\Omega\right\rangle\leq C .
\end{equation*}
Therefore
\begin{equation}\label{d}
\mathcal D_\Omega(\xi_\Omega^2,\xi_\Omega^2)\leq C\varepsilon\|\xi_\Omega\|_2^2 +
C_\varepsilon\|\xi_\Omega\|_2^4 .
\end{equation}
Note that
$\xi_\Omega\to0$ in
$H^1(\R^3)\cap L^\infty(\R^3)$ as $\Omega\to+\infty$.
Dividing \eqref{d} by $\|\xi_\Omega\|_2^2$, taking the upper limit as $\Omega\to+\infty$, and then
letting $\varepsilon\to0$, we obtain that \eqref{0qs} is true.

We next estimate the cubic term. Since $|w_\Omega\xi_\Omega| \leq\alpha w_\Omega^2+\frac{1}{4\alpha}\xi_\Omega^2$  for any $\alpha>0$, we obtain from \eqref{U} and \eqref{0qs} that
\begin{align}
\left|\mathcal D_\Omega(w_\Omega\xi_\Omega,\xi_\Omega^2)\right|\leq&
\mathcal D_\Omega(|w_\Omega\xi_\Omega|,\xi_\Omega^2) \leq
\alpha\mathcal D_\Omega(w_\Omega^2,\xi_\Omega^2)+\frac{1}{4\alpha}\mathcal D_\Omega(\xi_\Omega^2,\xi_\Omega^2)\nonumber\\
=&\alpha\int_{\R^3}U_\Omega\xi_\Omega^2 dx+\frac{1}{4\alpha}\mathcal D_\Omega(\xi_\Omega^2,\xi_\Omega^2)\leq C\alpha\|\xi_\Omega\|_2^2
+\frac{1}{4\alpha}o(1)\|\xi_\Omega\|_2^2  \quad\text{as}\quad\Omega\to+\infty.\nonumber
\end{align}
For fixed $\alpha>0$, taking $\Omega\to+\infty$, and then letting
$\alpha\searrow0$, we obtain that
\begin{equation*}
\mathcal D_\Omega(w_\Omega\xi_\Omega,\xi_\Omega^2) =o(1)\|\xi_\Omega\|_2^2\quad\text{as}\quad\Omega\to+\infty.
\end{equation*}
Combining the above estimates,
%we conclude that
%\begin{equation*}
 %       \mathcal D_\Omega(w_\Omega\xi_\Omega,\xi_\Omega^2)
  %      =    o(1)\|\xi_\Omega\|_2^2
        %\quad \text{and} \quad\mathcal D_\Omega(\xi_\Omega^2,\xi_\Omega^2)
        %= o(1)\|\xi_\Omega\|_2^2 \quad\text{as}\quad\Omega\to+\infty.
%\end{equation*} Consequently,
we have
\begin{equation}\label{Taylor}
\mathcal J_\Omega(\widetilde w_\Omega)
=\mathcal J_\Omega(w_\Omega)+\frac12\left\langle\mathcal L_{\Omega,w_\Omega}\xi_\Omega,\xi_\Omega\right\rangle+o(1)\|\xi_\Omega\|_2^2  \quad \text{as}\quad \Omega\to+\infty.
\end{equation}

It follows from \eqref{0wd} that
\begin{equation}\label{c4}
\xi_\Omega=R_\Omega+ \left((1-\delta_\Omega^2)^{1/2}-1\right)w_\Omega= R_\Omega+O(\delta_\Omega^2)w_\Omega  \quad \text{as}\quad \Omega\to+\infty.
\end{equation}
Then, for sufficiently large $\Omega>0$,
\begin{equation*}
\begin{aligned}
\left\langle
\mathcal L_{\Omega,w_\Omega}\xi_\Omega,\xi_\Omega\right\rangle= \left\langle
\mathcal L_{\Omega,w_\Omega}R_\Omega,R_\Omega\right\rangle +\left\langle \mathcal L_{\Omega,w_\Omega} R_\Omega,w_\Omega\right\rangle O(\delta_\Omega^2)    +\left\langle
\mathcal L_{\Omega,w_\Omega}w_\Omega,w_\Omega
\right\rangle O(\delta_\Omega^4)  .
\end{aligned}
\end{equation*}
Using the Euler--Lagrange equation \eqref{4}, we deduce from \eqref{Q} that
$$ \big |\left\langle\mathcal L_{\Omega,w_\Omega} \varphi,w_\Omega\right\rangle\big|=2 \big |\mathcal D_\Omega (\varphi w_\Omega, w_\Omega^2)\big |\leq 2\|U_\Omega\|_{L^{\infty}(\R^3)}\|\varphi\|_{2}\|w_\Omega\|_{2} \quad \text{for any} \quad \varphi \in \H.$$
One can further deduce from
\eqref{U} that for sufficiently large $\Omega>0$,
\begin{equation*}
\big |\left\langle\mathcal L_{\Omega,w_\Omega} R_\Omega,w_\Omega\right\rangle\big| \leq C\|R_\Omega\|_2,\qquad
\big | \left\langle\mathcal L_{\Omega,w_\Omega}w_\Omega,w_\Omega\right\rangle\big| \leq C ,
\end{equation*}
where $C>0$ is independent of $\Omega$. Since
$ \|R_\Omega\|_2=\delta_\Omega\to 0$ as $\Omega\to+\infty$,
we obtain that
\begin{equation}\label{0qr}
\left\langle
\mathcal L_{\Omega,w_\Omega}\xi_\Omega,\xi_\Omega\right\rangle=
\left\langle\mathcal L_{\Omega,w_\Omega}R_\Omega,R_\Omega\right\rangle
+o(1)\|R_\Omega\|_2^2  \quad \text{as}\quad \Omega\to+\infty.
\end{equation}
Moreover,  we also
derive from \eqref{c4} that
\begin{equation}\label{f1}
\|\xi_\Omega\|_2^2 = \|R_\Omega\|_2^2+O(\delta_\Omega^3)= \|R_\Omega\|_2^2+o(1)\|R_\Omega\|_2^2 \quad \text{as}\quad \Omega\to+\infty .
\end{equation}
Combining \eqref{0mee}, \eqref{Taylor},
\eqref{0qr} and \eqref{f1}, we derive that
\begin{equation}\label{0ke}
0=\frac12\left\langle\mathcal L_{\Omega,w_\Omega}R_\Omega,R_\Omega\right\rangle
+o(1)\|R_\Omega\|_2^2  \quad \text{as}\quad \Omega\to+\infty.
\end{equation}

%On the other hand, $R_\Omega$ is real-valued and even with respect to $x_3$. Moreover, by \eqref{0Ro}, $R_\Omega$ is orthogonal to $w_\Omega$ in $L^2(\R^3)$.
Applying Proposition
\ref{0prlo}, we obtain from \eqref{0Ro} that
\begin{equation}\label{0cR}
\left\langle
\mathcal L_{\Omega,w_\Omega}R_\Omega,R_\Omega
\right\rangle
\geq
c_0\|R_\Omega\|_2^2  \quad \text{as}\quad \Omega\to+\infty.
\end{equation}
It then follows from \eqref{0ke} and \eqref{0cR} that $0\geq\left(\frac{c_0}{2}+o(1)\right)\|R_\Omega\|_2^2$ as $\Omega\to+\infty$. This is impossible because $c_0>0$ and
$\|R_\Omega\|_2=\delta_\Omega>0$ for sufficiently large $\Omega$. This contradiction proves that the positive even minimizer is unique for all sufficiently large $\Omega>0$.
%Now let $u_\Omega$ and $\widetilde u_\Omega$ be two arbitrary positive minimizers. By Proposition \ref{lem2.3}, after suitable translations in the $x_3$-direction, both can be made even with respect to $x_3$.By the first part of the proof, these two translated minimizers coincide. Hence there exists $z\in\R$ such that
%\begin{equation*}
 %       \widetilde u_\Omega(x_\perp,x_3)= u_\Omega(x_\perp,x_3-z).
%\end{equation*}
This completes the proof of Theorem \ref{large-uniqueness}.\qed
%It remains to consider complex-valued minimizers. Let $u\in\mathcal H$ be a complex-valued minimizer with $\|u\|_2=1$. Since all the terms in $\mathcal E_\Omega$, except the kinetic term, depend only on $|u|$, and since $ |\nabla |u||\leq |\nabla u| ~a.e.$ in $\mathbb R^3$,
%we have $\mathcal E_\Omega(|u|)\leq \mathcal E_\Omega(u)$. Hence, $|u|$ is also a non-negative minimizer. By the uniqueness of non-negative minimizers modulo translations, there exists $z\in\mathbb R$ such that
%\begin{equation*}       |u(x_\perp,x_3)|=u_\Omega(x_\perp,x_3-z).\end{equation*}
%Moreover, equality must hold in the above gradient inequality. Since $|u|>0$, writing $u=|u|\eta$ with $|\eta|=1$, the standard identity
%\begin{equation*}       |\nabla u|^2=|\nabla |u||^2+|u|^2|\nabla\eta|^2\end{equation*}
%implies that $\nabla\eta=0$ a.e. in $\mathbb R^3$. Thus
%$\eta=e^{i\theta}$ for some $\theta\in[0,2\pi)$, and therefore
%\begin{equation*}   u(x_\perp,x_3)=e^{i\theta}u_\Omega(x_\perp,x_3-z).\end{equation*}
%Consequently, the set of complex-valued minimizers is exactly
%\begin{equation*}   \left\{ e^{i\theta}u_\Omega(x_\perp,x_3-z): \theta\in[0,2\pi),\ z\in\mathbb R \right\},\end{equation*}

\appendix
\renewcommand{\theequation}{A.\arabic{equation}}
\setcounter{equation}{0}
\phantomsection
\section{Appendix}\label{Appendix}
%. Uniqueness of Ground States as $\Omega\searrow 0$}
\addcontentsline{toc}{section}{Appendix. Uniqueness of Ground States as $\Omega\searrow 0$}
This section is devoted to the proof of Theorem \ref{small-uniqueness} on the uniqueness of ground state $u_\Omega$ for the Pekar ground state energy \eqref{inf} as $\Omega\searrow 0$. Recall from \cite{Lieb76} that the ground state $Q$ of $E_0$ is unique up to translations in $\R^3$ and a constant phase. We fix $Q$ to be positive and radial.
Throughout this appendix,
a function  $h:\R^3\to\R$ is said to be even if
$$
h(-x)=h(x)\quad\text{for all} \quad x\in\R^3.
$$
Let $u_\Omega
>0$ be the positive even minimizer of \eqref{inf} obtained in Theorem \ref{existence}. By a standard concentration-compactness principle, one can then verify that
\begin{equation}\label{e10}
 u_\Omega(x)\to Q(x) \text{ \ strongly in \ }\H\text{ \ as \ }\Omega\searrow 0.
\end{equation}
The nondegeneracy of $Q$ states that the linearized operator $L_0$ given by
\begin{equation}\label{L0}
L_0\phi=-\Delta\phi-\mu_0\phi-\left(|x|^{-1}*Q^2\right)\phi-
2\left(|x|^{-1}*(Q\phi)\right)Q,
\end{equation}
acting on $L^2(\R^3)$ with domain $H^2(\R^3)$, has the kernel (cf. \cite{Lenzmann}) $\operatorname{Ker} L_0=\operatorname{Span}\{\partial_1Q,\partial_2Q,\partial_3Q\}$, where $\mu_0<0$ is the Lagrange multiplier associated with $Q$. For later use, we introduce the quadratic form associated with $L_0$:
\begin{equation}\label{e0}
\begin{aligned}
\mathfrak q_0(h,k):=\int_{\R^3}\nabla h\cdot\nabla k dx-\mu_0\int_{\R^3}hk dx  -\int_{\R^3}\bigl(|x|^{-1}*Q^2\bigr)hk dx-2\int_{\R^3}\bigl(|x|^{-1}*(Qh)\bigr)Qk dx ,
\end{aligned}
\end{equation}
and write $\mathfrak q_0(h):=\mathfrak q_0(h,h)$.
It follows from \cite[Proposition 5]{FLR13} that there exists a constant $c>0$ such that
\begin{equation}\label{e1}
\mathfrak q_0(h)\geq c\|h\|_{2}^2
\end{equation}
for every even function $h\in H^1(\R^3)$ satisfying $\langle Q,h\rangle_{L^2(\R^3)}=0$. According to \eqref{e0}, one can further upgrade \eqref{e1} to
\begin{equation}\label{e2}
\mathfrak q_0(h)\geq c_0\|h\|_{H^1(\R^3)}^2
\end{equation}
for some constant $c_0>0$ and every such $h$.

\vskip 0.05truein
The following lemma extends the coercivity estimate \eqref{e2} to
functions with a small component in the $Q$ direction.

\begin{Lemma}
\label{leme3}
There exist constants $\delta_0>0$ and $c_2>0$ such that, if
$h\in H^1(\R^3)$ is even and satisfies
\begin{equation}\label{e4}
|\langle Q,h\rangle|\leq \delta_0\|h\|_2,
\end{equation}
then
\begin{equation}\label{e5}
\mathfrak q_0(h)\geq c_2\|h\|_{H^1(\R^3)}^2.
\end{equation}
\end{Lemma}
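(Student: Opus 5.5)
The plan is to decompose $h$ along $Q$ and use the coercivity \eqref{e2} on the orthogonal complement, in the same way as in the proof of Proposition \ref{0prlo}, Step 1. Normalize $\|Q\|_2=1$, which holds since $Q$ is the minimizer of $E_0$. Set
\[
\beta:=\langle Q,h\rangle, \qquad h_\perp:=h-\beta Q .
\]
Since $Q$ is radial, hence even, the function $h_\perp$ is even and satisfies $\langle Q,h_\perp\rangle=0$. Therefore \eqref{e2} gives $\mathfrak q_0(h_\perp)\geq c_0\|h_\perp\|_{H^1(\R^3)}^2$.

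Next we expand the quadratic form:
\[
\mathfrak q_0(h)=\mathfrak q_0(h_\perp)+2\beta\,\mathfrak q_0(h_\perp,Q)+\beta^2\mathfrak q_0(Q).
\]
Since $Q\in H^1(\R^3)$ and $|x|^{-1}*Q^2\in L^\infty(\R^3)$, the Hardy--Littlewood--Sobolev inequality (or the bound $\||x|^{-1}*(Qh)\|_{L^\infty}\leq C\|Q\|_{H^1}\|h\|_2$) yields
\[
|\mathfrak q_0(h_\perp,Q)|\leq C\|h_\perp\|_{H^1(\R^3)},\qquad |\mathfrak q_0(Q)|\leq C .
\]
In fact $L_0Q=-2(|x|^{-1}*Q^2)Q$ is explicit, but only the boundedness is needed. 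Young's inequality then gives
\[
\mathfrak q_0(h)\geq \tfrac{c_0}{2}\|h_\perp\|_{H^1(\R^3)}^2-C\beta^2 .
\]

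It remains to compare the two sides with $\|h\|_{H^1(\R^3)}$. From $h=h_\perp+\beta Q$ we get
\[
\|h\|_{H^1(\R^3)}^2\leq 2\|h_\perp\|_{H^1(\R^3)}^2+2\beta^2\|Q\|_{H^1(\R^3)}^2,
\]
so
\[
\|h_\perp\|_{H^1(\R^3)}^2\geq \tfrac12\|h\|_{H^1(\R^3)}^2-C\beta^2 .
\]
The hypothesis \eqref{e4} gives $\beta^2\leq\delta_0^2\|h\|_2^2\leq\delta_0^2\|h\|_{H^1(\R^3)}^2$. Combining,
\[
\mathfrak q_0(h)\geq\Big(\tfrac{c_0}{4}-C\delta_0^2\Big)\|h\|_{H^1(\R^3)}^2 .
\]
Choosing $\delta_0$ small enough that $C\delta_0^2\leq c_0/8$ gives \eqref{e5} with $c_2=c_0/8$.

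The only point needing care is keeping the constants uniform. Those in the cross-term bounds depend only on $Q$ and $\mu_0$, so $\delta_0$ and $c_2$ are absolute. No step is a genuine obstacle: the argument is a direct perturbation of \eqref{e2}, and the parity assumption is used only to guarantee that $h_\perp$ stays in the even class, which avoids the kernel $\operatorname{span}\{\partial_iQ\}$ consisting of odd functions.
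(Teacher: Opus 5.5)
Your proposal is correct and follows the paper's proof: the same decomposition $h=h^\perp+\langle Q,h\rangle Q$, coercivity \eqref{e2} on the even orthogonal part, the bound $\mathfrak q_0(h)\geq\frac{c_0}{2}\|h^\perp\|_{H^1(\R^3)}^2-C\langle Q,h\rangle^2$, and a small choice of $\delta_0$. The only difference is cosmetic: the paper compares $\|h^\perp\|_{H^1(\R^3)}$ with $\|h\|_{H^1(\R^3)}$ via the triangle inequality, while you use the squared bound $\|h\|_{H^1(\R^3)}^2\leq 2\|h_\perp\|_{H^1(\R^3)}^2+2\beta^2\|Q\|_{H^1(\R^3)}^2$.
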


\noindent{\bf Proof }
Since $\|Q\|_2=1$, write
\begin{equation*}
h=h^\perp+aQ, \qquad a:=\langle Q,h\rangle, \qquad \langle Q,h^\perp\rangle=0.
\end{equation*}
We obtain from \eqref{e2} that $\mathfrak q_0(h^\perp) \geq c_0\|h^\perp\|_{H^1(\R^3)}^2$. Moreover, one can check that $|\mathfrak q_0(h, k)|\leq C\|h\|_{H^1(\R^3)}\|k\|_{H^1(\R^3)}$ for some constant $C>0$. Thus, there exists a constant $C>0$ such that
\begin{equation}\label{D1}
\mathfrak q_0(h)=\mathfrak q_0(h^\perp)+2a\mathfrak q_0(h^\perp,Q)
+a^2\mathfrak q_0(Q)\geq\frac{c_0}{2}\|h^\perp\|_{H^1(\R^3)}^2-Ca^2.
\end{equation}
Additionally, it yields from \eqref{e4} that
\begin{equation}\label{D3}
|a|\leq\delta_0\|h\|_2 \leq\delta_0\|h\|_{H^1(\R^3)}.
\end{equation}
Note that
\begin{equation}\label{D2}
\|h^\perp\|_{H^1(\R^3)} \geq \|h\|_{H^1 (\R^3)}-|a|\|Q\|_{H^1 (\R^3)}.
\end{equation}
Thus, by choosing $\delta_0>0$ sufficiently small, we conclude from \eqref{D1}--\eqref{D2} that \eqref{e5} is true, and the proof of Lemma \ref{leme3} is complete.
\qed
\vskip 0.05truein

We next establish an estimate for the nonlinear remainder of the
Hartree term. Define $\mathcal N(u):=\bigl(|x|^{-1}*u^2\bigr)u$, so that
\begin{equation}\label{e6}
\mathcal N'(u)h =\big (|x|^{-1}*u^2\big )h +2\big (|x|^{-1}*(uh)\big )u.
\end{equation}

\vskip 0.05truein

\begin{Lemma}
\label{leme8}
There exists a constant $C>0$ such that, for all
$u,v\in H^1(\R^3)$,
\begin{align}
& | \langle\mathcal N(u)-\mathcal N(v)-\mathcal N'(Q)(u-v),u-v \rangle | \label{e7}\\
\leq&C \big(\|u\|_{H^1(\R^3)}+\|v\|_{H^1(\R^3)}+\|Q\|_{H^1(\R^3)}\big) \bigl(\|u-Q\|_{H^1(\R^3)}+\|v-Q\|_{H^1(\R^3)}\bigr)\|u-v\|_{H^1(\R^3)}^2,\nonumber
\end{align}
where $Q>0$ is the unique radial minimizer of $E_0$.
\end{Lemma}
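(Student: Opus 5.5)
The plan is to reduce the estimate to trilinear and quadrilinear Coulomb forms and bound each one by the Hardy--Littlewood--Sobolev inequality. Throughout we write $w:=u-v$ and $D(f,g):=\iint f(x)g(y)|x-y|^{-1}dxdy$. The basic tool is the bound
\begin{equation*}
|D(f_1f_2,f_3f_4)|\leq C\prod_{i=1}^4\|f_i\|_{L^{12/5}(\R^3)}\leq C\prod_{i=1}^4\|f_i\|_{H^1(\R^3)},
\end{equation*}
which follows from HLS with exponents $6/5$, Hölder, and the Sobolev embedding $H^1(\R^3)\hookrightarrow L^{12/5}(\R^3)$. With this in hand, it suffices to write the pairing in \eqref{e7} as a finite sum of terms $D(f_1f_2,f_3f_4)$. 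In each term two of the factors will equal $w$, and at least one of the remaining factors will be one of the differences $u-Q$ or $v-Q$.

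\textbf{Step 1 (exact algebraic decomposition).} The map $\mathcal N$ is cubic, given by the symmetric trilinear form $T(a,b,c):=\frac13\big[(|x|^{-1}*(ab))c+(|x|^{-1}*(bc))a+(|x|^{-1}*(ac))b\big]$, so $\mathcal N(u)=T(u,u,u)$. We then use the identity
\begin{equation*}
\mathcal N(u)-\mathcal N(v)=\int_0^1\mathcal N'(v+tw)w\,dt .
\end{equation*}
This gives
\begin{equation*}
\mathcal N(u)-\mathcal N(v)-\mathcal N'(Q)w=\int_0^1\big(\mathcal N'(z_t)-\mathcal N'(Q)\big)w\,dt,\qquad z_t:=v+tw .
\end{equation*}
Using \eqref{e6}, we expand $\mathcal N'(z)-\mathcal N'(Q)$. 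Since $z^2-Q^2=(z-Q)(z+Q)$ and $zh\,z-Qh\,Q=(z-Q)h\,z+Qh\,(z-Q)$, the expansion gives
\begin{equation*}
\langle(\mathcal N'(z)-\mathcal N'(Q))w,w\rangle=D\big((z-Q)(z+Q),w^2\big)+2D\big(zw,(z-Q)w\big)+2D\big((z-Q)w,Qw\big).
\end{equation*}

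\textbf{Step 2 (estimate).} We apply the HLS bound to each of the three terms with $z=z_t$. Each term is then bounded by
\begin{equation*}
C\|z_t-Q\|_{H^1}\big(\|z_t\|_{H^1}+\|Q\|_{H^1}\big)\|w\|_{H^1}^2 .
\end{equation*}
Next we use $z_t-Q=(1-t)(v-Q)+t(u-Q)$. This yields $\|z_t-Q\|_{H^1}\leq\|u-Q\|_{H^1}+\|v-Q\|_{H^1}$ and $\|z_t\|_{H^1}\leq\|u\|_{H^1}+\|v\|_{H^1}$, both uniformly in $t\in[0,1]$. Integrating over $t$ then gives \eqref{e7}. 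To justify interchanging the $t$-integral with the pairing, we note that the integrand is a polynomial in $t$ whose coefficients are finite by the same HLS bound.

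\textbf{Main obstacle.} The only delicate point is bookkeeping: in every term of the expansion, one factor must be a difference from $Q$ and two factors must be $w$. Carrying out the derivative identity in Step 1 exactly, instead of doing a naive Taylor expansion around $Q$, is what guarantees this. Otherwise the remainder would contain terms of size $\|w\|^3$ that carry no difference from $Q$.
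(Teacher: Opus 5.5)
Your proof is correct and follows the paper's route. Both proofs write the remainder as $\int_0^1\langle(\mathcal N'(v+t(u-v))-\mathcal N'(Q))(u-v),u-v\rangle\,dt$ and bound the integrand by Hardy--Littlewood--Sobolev; the paper first proves a general Lipschitz bound $|\langle(\mathcal N'(u)-\mathcal N'(v))h,\varphi\rangle|\le C(\|u\|_{H^1}+\|v\|_{H^1})\|u-v\|_{H^1}\|h\|_{H^1}\|\varphi\|_{H^1}$, whereas you expand directly with $h=\varphi=w$ and spell out the bounds on $\|z_t-Q\|_{H^1}$ and $\|z_t\|_{H^1}$ that the paper leaves implicit. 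Your closing ``main obstacle'' remark is overstated: since $\|w\|_{H^1}\le\|u-Q\|_{H^1}+\|v-Q\|_{H^1}$, terms of higher order in $w$ are also admissible, so a direct trilinear expansion around $Q$ would work just as well.
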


\noindent{\bf Proof }
Using Hardy--Littlewood--Sobolev inequality, we obtain that
for any $a,b,c,d\in H^1(\R^3)$,
\begin{equation}\label{D4}
\begin{aligned}
\left|\iint_{\R^3\times\R^3}\frac{a(x)b(x)c(y)d(y)}{|x-y|} dx dy\right|
&\leq C\|ab\|_{L^{\f65}(\R^3)}\|cd\|_{L^{\f65}(\R^3)}\\
&\leq  C\|a\|_{H^1(\R^3)}\|b\|_{H^1(\R^3)} \|c\|_{H^1(\R^3)}\|d\|_{H^1(\R^3)}
\end{aligned}
\end{equation}
for some constant $C>0$.
%It then follows from \eqref{e6} and \eqref{D4} that there exists a constant $C>0$ such that
It then follows from \eqref{e6} and
\eqref{D4} that for any $\varphi\in H^1(\R^3)$,
\begin{align}
\left|\left\langle\bigl(\mathcal N'(u)-\mathcal N'(v)\bigr)h,\varphi\right\rangle
\right|\leq &C\bigl(\|u\|_{H^1(\R^3)}+\|v\|_{H^1(\R^3)}\bigr)\|u-v\|_{H^1(\R^3)}\|h\|_{H^1(\R^3)}
\|\varphi\|_{H^1(\R^3)}.\nonumber
\end{align}
%\begin{equation}\label{e9}
%\begin{aligned}
%&\big \|\bigl(\mathcal N'(u)-\mathcal N'(v)\bigr)h\big \|_{H^{-1}(\R^3)}\\
%\leq& C\Big\{ \big \| \big[|x|^{-1}*\big(u^2-v^2\big)\big]h
%\big \|_{H^{-1}(\R^3)}+\big \| \big[|x|^{-1}*\big( (u-v)h\big)\big]u
%\big \|_{H^{-1}(\R^3)} \\
%&+\big \| \big[|x|^{-1}*\big( vh\big)\big](u-v)\big \|_{H^{-1}(\R^3)}\Big\}\\
%\leq&C \big(\|u\|_{H^1(\R^3)}+\|v\|_{H^1(\R^3)}\big) \|u-v\|_{H^1(\R^3)}\|h\|_{H^1(\R^3)} .
%\end{aligned}
%\end{equation}
%with $\langle f,g\rangle_{H^{-1}(\R^3), H^1(\R^3)}=\int_{\R^3}fgdx$.
Finally, we calculate from above that
\begin{equation*}
\begin{aligned}
&\big|\big\langle \mathcal N(u)-\mathcal N(v)-\mathcal N'(Q)(u-v),
u-v \big\rangle \big|\leq\int_0^1\big|\big\langle \big (\mathcal N'(v+t(u-v))-\mathcal N'(Q)\big )(u-v) ,u-v \big\rangle \big|dt\\
\leq&C \big(\|u\|_{H^1(\R^3)}+\|v\|_{H^1(\R^3)}+\|Q\|_{H^1(\R^3)}\big)   \bigl(\|u-Q\|_{H^1(\R^3)}+\|v-Q\|_{H^1(\R^3)}\bigr)\|u-v\|_{H^1(\R^3)}^2.
\end{aligned}
\end{equation*}
The proof of Lemma \ref{leme8} is therefore complete.\qed

\vskip 0.05truein

Combining the coercivity estimate in Lemma \ref{leme3} with the
nonlinear remainder estimate in Lemma \ref{leme8}, we are now ready
to prove Theorem \ref{small-uniqueness}.

\vskip 0.05truein
\noindent{\bf Proof of Theorem \ref{small-uniqueness}.}
Let $u_\Omega$ and $\widetilde u_\Omega$ be two positive even
minimizers of \eqref{inf}, and let $\mu_\Omega$ and
$\widetilde\mu_\Omega$ be their respective Lagrange multipliers.
We derive from \eqref{e10} that $\mu_\Omega\to \mu_0$ and
$\widetilde\mu_\Omega\to \mu_0$ as $\Omega\searrow0$. Then
\begin{equation}\label{e11}
\begin{aligned}
\|u_\Omega-Q\|_{\H}+\|\widetilde u_\Omega-Q\|_{\H}+|\mu_\Omega-\mu_0|+|\widetilde\mu_\Omega-\mu_0|\to0 \quad \text{as} \quad \Omega\searrow0.
\end{aligned}
\end{equation}
Set
$\bar{w}_\Omega:=u_\Omega-\widetilde u_\Omega$.
Since both minimizers are normalized in $L^2(\R^3)$, we have
\begin{equation}\label{e12}
\begin{aligned}
2\langle Q,\bar{w}_\Omega\rangle=-\|u_\Omega-Q\|_2^2+\|\widetilde u_\Omega-Q\|_2^2=-\left\langle u_\Omega+\widetilde u_\Omega-2Q, \bar{w}_\Omega\right\rangle.
\end{aligned}
\end{equation}
Hence
\begin{equation}\label{e13}
|\langle Q,\bar{w}_\Omega\rangle |\leq\frac12\big(\|u_\Omega-Q\|_2+\|\widetilde u_\Omega-Q\|_2\big)\|\bar{w}_\Omega\|_2.
\end{equation}
Applying Lemma
\ref{leme3}, it follows from   \eqref{e11}
and \eqref{e13} that for some constant $c_2>0$,
\begin{equation}\label{e14}
\mathfrak q_0(\bar{w}_\Omega)\geq c_2\|\bar{w}_\Omega\|_{H^1(\R^3)}^2 \quad \text{as} \quad \Omega\searrow0.
\end{equation}

Note that the two minimizers satisfy
$$
-\Delta u_\Omega+\Omega  |x_\perp|^2u_\Omega-\mathcal N(u_\Omega)=\mu_\Omega u_\Omega,\ \ -\Delta\widetilde u_\Omega+\Omega  |x_\perp|^2\widetilde u_\Omega-\mathcal N(\widetilde u_\Omega)=\widetilde\mu_\Omega\widetilde u_\Omega.$$
It then follows from \eqref{L0} and \eqref{e6} that
%\begin{equation}\label{e15}
%\begin{aligned}
%L_0w_\Omega+\Omega |x_\perp|^2 \bar{w}_\Omega
%=
%\mathcal N(u_\Omega)-\mathcal N(\widetilde u_\Omega)
%-\mathcal N'(Q)\bar{w}_\Omega
%+(\mu_\Omega-\mu_0)u_\Omega-
%(\widetilde\mu_\Omega-\mu_0)\widetilde u_\Omega,
%\end{aligned}\end{equation}
%which implies that
\begin{align}
&\mathfrak q_0(\bar{w}_\Omega)
+\Omega\int_{\R^3} |x_\perp|^2|\bar{w}_\Omega|^2 dx\label{e16}\\
=&\left\langle\mathcal N(u_\Omega)-\mathcal N(\widetilde u_\Omega)-\mathcal N'(Q)\bar{w}_\Omega,\bar{w}_\Omega\right\rangle +(\mu_\Omega-\mu_0)
\langle u_\Omega, \bar{w}_\Omega\rangle -(\widetilde\mu_\Omega-\mu_0)
\langle\widetilde u_\Omega, \bar{w}_\Omega\rangle.\nonumber
\end{align}
Since $\|u_\Omega\|_2=\|\widetilde u_\Omega\|_2=1$, one can calculate that $\langle u_\Omega,\bar{w}_\Omega\rangle  =\frac12\|\bar{w}_\Omega\|_2^2$ and $\langle\widetilde u_\Omega,\bar{w}_\Omega\rangle =-\frac12\|\bar{w}_\Omega\|_2^2$. Therefore, \eqref{e16} becomes
\begin{equation*}
\begin{aligned}
\mathfrak q_0(\bar{w}_\Omega)
+\Omega\int_{\R^3}|x_\perp|^2|\bar{w}_\Omega|^2 dx
=\left\langle\mathcal N(u_\Omega)-\mathcal N(\widetilde u_\Omega)
-\mathcal N'(Q)\bar{w}_\Omega,\bar{w}_\Omega\right\rangle +\frac{(\mu_\Omega-\mu_0)
+(\widetilde\mu_\Omega-\mu_0)}{2}\|\bar{w}_\Omega\|_2^2.
\end{aligned}
\end{equation*}

By Lemma \ref{leme8}, we have
\begin{align}
&\left|\left\langle\mathcal N(u_\Omega)-\mathcal N(\widetilde u_\Omega)
-\mathcal N'(Q)\bar{w}_\Omega,\bar{w}_\Omega\right\rangle \right|\label{e18}\\
\leq&C\big(\|u_\Omega\|_{H^1(\R^3)}+\|\widetilde u_\Omega\|_{H^1(\R^3)}+\|Q\|_{H^1(\R^3)}\big) \big(\|u_\Omega-Q\|_{H^1(\R^3)}
+\|\widetilde u_\Omega-Q\|_{H^1(\R^3)}\big)\|\bar{w}_\Omega\|_{H^1(\R^3)}^2.\nonumber
\end{align}
It then follows from
\eqref{e14}--\eqref{e18} that for sufficiently small $\Om>0$,
\begin{equation*}
\begin{aligned}
c_2\|\bar{w}_\Omega\|_{H^1}^2
\leq &
C\Big[\big(\|u_\Omega\|_{H^1(\R^3)}+\|\widetilde u_\Omega\|_{H^1(\R^3)}+\|Q\|_{H^1(\R^3)}\big)\big(\|u_\Omega-Q\|_{H^1(\R^3)}
+\|\widetilde u_\Omega-Q\|_{H^1(\R^3)}  \big)\\
&+|\mu_\Omega-\mu_0|+|\widetilde\mu_\Omega-\mu_0|\Big]\|\bar{w}_\Omega\|_{H^1(\R^3)}^2.
\end{aligned}
\end{equation*}
We finally conclude from \eqref{e11}
that $\bar{w}_\Omega=0$, and the proof of Theorem \ref{small-uniqueness} is therefore complete.
\qed

\end{document}